\begin{document}
\title{Magnus-type integrator for the finite element discretization of semilinear parabolic non-autonomous SPDEs driven by additive noise}
%
\author{Jean Daniel Mukam}\address{Fakult\"{a}t f\"{u}r Mathematik, Technische Universit\"{a}t Chemnitz, 09126 Chemnitz, Germany, email: \texttt{jean.d.mukam@aims-senegal.org}}
\author{Antoine Tambue}\address{Department of Computing Mathematics and Physics,  Western Norway University of Applied Sciences, Inndalsveien 28, 5063 Bergen.
Center for Research in Computational and Applied Mechanics (CERECAM), and Department of Mathematics and Applied Mathematics, University of Cape Town,
7701 Rondebosch, South Africa. The African Institute for Mathematical Sciences(AIMS) of South Africa, email: \texttt{antonio@aims.ac.za}}
%
\date{The dates will be set by the publisher}
\begin{abstract} 
In this  paper, we investigate a numerical approximation of a general second order 
 semilinear parabolic  non-autonomous   stochastic partial differential equation (SPDE)
 driven by  additive noise.  Numerical approximations for autonomous SPDEs are thoroughly 
 investigated in the literature while the   non-autonomous case is not yet well understood.
   We discretize the non-autonomous SPDE in space  by the finite element method  and in time by the Magnus-type integrator.
  We  provide a strong convergence proof of the fully discrete scheme toward the mild solution 
  in the root-mean-square $L^2$  norm.  Appropriate assumptions on the drift term  and the noise allow to achieve optimal 
  convergence order in time  greater than $1/2$, without any logarithmic reduction of convergence order in time.
  In particular, for trace class noise, we achieve optimal convergence orders  $\mathcal{O}\left(h^{2-\epsilon}+\Delta t\right)$, where $\epsilon$ is a positive number small enough.
  Numerical simulations are provided to illustrate our theoretical results.
 \end{abstract}
\begin{resume} 
Dans ce papier, nous investigons l'approximation num\'{e}rique d`equations aux deriv\'{e}es partielles (EDP) stochastique  s\'{e}milin\'{e}aire et 
non autonome avec un bruit additif. L'approximation num\'{e}rique d'EPD stochastique  autonome est largement \'{e}tudi\'{e}e dans la litt\'{e}rature scientifique, 
tandis que le cas non autonome reste encore tr\'{e}s peu connu. Le but de ce papier est d`investiguer le cas non autonome avec un bruit additif.
L'EDP stochastique est discretis\'{e}e en espace par la m\'{e}thode des \'{e}lements finis et en temps par un schema exponentiel de type Magnus. 
En plus, sous des hypoth\`{e}ses appropri\'{e}s, nous obtenons un ordre de convergence en temps sup\'{e}rieur a $1/2$, sans aucune reduction logarithmique. 
En particulier, pour un bruit de trace fini, nous  obtenons une convergence de la forme $\mathcal{O}\left(h^{2-\epsilon}+\Delta t\right)$, o\`{u} $\epsilon$ 
est un nombre r\'{e}el positif et suffisament petit. Les simulations num\'{e}riques pour illustrer les r\'{e}sultats th\'{e}oriques sont aussi faites. 
 \end{resume}
\subjclass{35L05, 33L70}
\keywords{Magnus-type integrator; Stochastic partial differential equations;  Additive  noise; Strong convergence;  Non-autonomous equations; Finite element method.}
\maketitle
\section*{Introduction}
We consider  the numerical approximations of  the following semilinear parabolic  non-autonomous  SPDE driven by additive noise 
 \begin{eqnarray}
 \label{model1}
  dX(t)=[A(t)X(t)+F\left(t,X(t)\right)]dt+dW(t),\quad
  X(0)=X_0,\quad t\in (0,T],
 \end{eqnarray}
 in the Hilbert space $L^2(\Lambda)$, where $\Lambda$ is a bounded domain of $\mathbb{R}^d$, $d=1,2,3$ and $T>0$. 
 The family of the unbounded linear operators $A(t)$  are not necessarily self-adjoint. Each $A(t)$  is assumed to generate 
 an analytic semigroup $S_t(s):= e^{A(t)s}$.  Precise assumptions on $A(t)$ and $F$  to ensure the existence of the unique
 mild solution of \eqref{model1} are given in the next section. 
 The random initial data is denoted by $X_0$. We denote by $(\Omega, \mathcal{F}, \mathbb{P})$ a probability 
 space with a filtration $(\mathcal{F}_t)_{t\in[0,T]}\subset \mathcal{F}$ that fulfills the usual conditions, see e.g.,  \cite[Definition 2.1.11]{Prevot}.
 The noise term $W(t)$ is assumed to be a $Q$-Wiener process defined on a filtered probability space $(\Omega, \mathcal{F}, \mathbb{P}, \{\mathcal{F}_t\}_{t\in[0,T]})$,
 where the covariance operator $Q : H\longrightarrow H$ is assumed to be linear, self adjoint and positive definite.
It is well known (see e.g., \cite{Prevot}) that the noise can be represented as 
\begin{eqnarray}
\label{covariance}
W(t,x)=\sum_{i=0}^{\infty}\sqrt{q}_ie_i(x)\beta_i(t),
\end{eqnarray} 
where $(q_i,e_i)_{i\in\mathbb{N}}$ are the eigenvalues and eigenfunctions of the covariance operator $Q$, and $(\beta_i)_{i\in\mathbb{N}}$
are independent and identically distributed standard Brownian motion. 
Autonomous systems are not realistic to model  phenomena in many fields such 
as quantum fields theory, electromagnetism, nuclear physics, see e.g., \cite[Section 7]{Blanes} and references therein.
Numerical solutions of  \eqref{model1}  based on  implicit, explicit Euler 
methods and exponential integrators with $A(t)=A$, where $A$ is  selfadjoint  are thoroughly investigated in the literature, see e.g., 
\cite{Jentzenal,Kruse,WangR,Lord2}.  If we turn our attention to the case of $A(t)=A$,
with $A$ not necessary self adjoint, the list of references become remarkably short, see e.g.,  \cite{Lord1,Tambue1}. 
The numerical approximation in time of the  deterministic counter part of \eqref{model1} with time dependent coefficient
$A(t)$ was investigated in \cite{Gonzalezal,Hippal,Hochbruck,Tambue2}, where Magnus-type integrator \cite{Magnus}
was used in \cite{Gonzalezal,Hochbruck,Tambue2};  and a new exponential integrator was used in \cite{Hippal}. 
Numerical approximation for  non-autonomous SPDE \eqref{model1} is not yet well understood  due to the complexity of the linear operator $A(t)$ 
and its semigroup $S_t(s)= e^{A(t)s}$.
Recently numerical scheme for stochastic model \eqref{model1} driven by multiplicative noise with time dependent 
linear operator $A(t)$  was investigated in \cite{Tambue3}, where the time discretization was done using the Magnus-type integrator. 
The optimal convergence  order in time in \cite{Tambue3} was $1/2$. 
This is the optimal convergence order when dealing with multiplicative noise with schemes based on Euler approximations 
(namely explicit Euler method, linear implicit Euler method, exponential Euler, exponential Rosenbrock-Euler).
In fact even for stochastic ordinary differential equation (SODE) driven by multiplicative noise, 
the Euler type method achieves optimal order $1/2$, see e.g., \cite{Clark}, whereas when dealing with SODE driven by additive noise 
the optimal convergence order is $1$, see e.g. \cite{Kloeden}.
In this paper, we extend that result to the SPDE \eqref{model1} and prove that the Magnus-type integrator applied 
to SPDE \eqref{model1} achieves an optimal order $1$ in time.
The price to pay is that we require additional assumptions on the nonlinear function $F$ than the only standard 
Lipschitz condition. An important ingredient to achieve that optimal convergence order is the application of Taylor's
formula in Banach space  to the drift function, see  \secref{TalyorBanach}. It is worth to mention that
such approach and assumptions on the nonlinear drift function  $F$ were also used in \cite{Kloedenal,WangR,Tambue1,Lord2}
for exponential integrators and semi-implicit Euler method for autonomous SPDEs driven by additive noise to achieve optimal 
convergence order $1$ in time.  
Due to the complexity   of  the  linear operator and the corresponding  semi discrete linear operator after  space discretisation,
novel additional technical estimates are provided on the terms involving the noise to achieve higher convergence order, 
see e.g.,  \lemref{fonda} and  \secref{Noiseestimate}.  The result indicates how the convergence orders depend on the 
regularity of the initial data and the noise.  More precisely, the fully discrete scheme achieves convergence order 
$\mathcal{O}\left(h^{\beta}+\Delta t^{\beta/2}\right)$, where $\beta$ is  defined in  \assref{assumption1}.
We emphasize that comparing with results for autonomous SPDES with not necessary self adjoint, here we achieve optimal convergence order
$1$ in time for the border case $\beta=2$, instead of  sub-optimal convergence order $1-\epsilon$ obtained in \cite{Tambue1,Jentzenal}. 
The optimal convergence orders  achieved in \cite{Wang1,Kruse,WangR},  where due to  sharp integral estimates and optimal regularity estimates in \cite{KruseLarsson}.
Note that key ingredient to achieve optimal regularity estimates in \cite{KruseLarsson} is the spectral decomposition of the linear operator $A$. This cannot directly 
applied to the case of time dependent and not necessarily self-adjoint operator $A(t)$  due to its complexity and its associated semigroup $S_s(t)=e^{A(s)t}$.  
In  this paper, \lemsref{sharp} and \ref{sharpsecond} provide appropriate ingredients to fill the gap. 

The rest of this paper is organised as follows.  \secref{wellposed} provides the general setting, the numerical scheme and the main result.
 In  \secref{convergenceproof}, we provide some preparatory results and  present the proof of the main results.  \secref{experiment} 
provides some numerical experiments to sustain our theoretical results.
\section{Mathematical setting, numerical scheme and main results}
\label{wellposed}
\subsection{Notations and main assumptions}
\label{notation}
Let  $(H,\langle.,.\rangle_H,\Vert .\Vert)$ be an separable Hilbert space.  For all $p\geq 2$ and for a Banach space $U$,
we denote by $L^p(\Omega, U)$ the Banach space of all equivalence classes of $p$ integrable $U$-valued random variables. Let $L(U,H)$ be 
 the space of bounded linear mappings from $U$ to $H$ endowed with the usual  operator norm $\Vert .\Vert_{L(U,H)}$. By  $\mathcal{L}_2(U,H):=HS(U,H)$,
 we  denote the space of Hilbert-Schmidt operators from $U$ to $H$ equipped with the norm 
 $\Vert l\Vert^2_{\mathcal{L}_2(U,H)}:=\sum\limits_{i=1}^{\infty}\Vert l\psi_i\Vert^2$,  $l\in \mathcal{L}_2(U,H)$, where $(\psi_i)_{i=1}^{\infty}$ 
 is an orthonormal basis of $U$. Note that this definition is independent of the orthonormal basis of $U$. 
 For simplicity, we use the notations $L(U,U)=:L(U)$ and $\mathcal{L}_2(U,U)=:\mathcal{L}_2(U)$. 
 For all $l\in L(U,H)$ and $l_1\in\mathcal{L}_2(U)$ we have $ll_1\in\mathcal{L}_2(U,H)$ and 
\begin{eqnarray}
\label{chow1}
\Vert ll_1\Vert_{\mathcal{L}_2(U,H)}\leq \Vert l\Vert_{L(U,H)}\Vert l_1\Vert_{\mathcal{L}_2(U)},
\end{eqnarray}
see e.g., \cite{Chow}. 
   The covariance operator  $Q : H\longrightarrow H$ is assumed to be positive and self-adjoint. Throughout this paper $W(t)$ is a $Q$-wiener process. 
 The space of Hilbert-Schmidt operators from  $Q^{1/2}(H)$ to $H$ is denoted by $L^0_2:=\mathcal{L}_2(Q^{1/2}(H),H)=HS(Q^{1/2}(H),H)$. As usual, $L^0_2$ is equipped with the norm 
 \begin{eqnarray}
 \Vert l\Vert_{L^0_2} :=\Vert lQ^{1/2}\Vert_{HS}=\left(\sum\limits_{i=1}^{\infty}\Vert lQ^{1/2}e_i\Vert^2\right)^{1/2}, \quad  l\in L^0_2,
 \end{eqnarray}
where $(e_i)_{i=1}^{\infty}$ is an orthonormal basis  of $H$.
This definition is independent of the orthonormal basis of $H$. 
For an $L^0_2$- predictable stochastic process $\phi :[0,T]\times \Lambda\longrightarrow L^0_2$ such that
\begin{eqnarray}
\int_0^t\mathbb{E}\left\Vert \phi (s)Q^{1/2}\right\Vert^2_{\mathcal{L}_2(H)}ds<\infty,\quad t\in[0,T],
\end{eqnarray}
the following relation called It\^{o}'s isometry property holds
{\small
\begin{eqnarray}
\label{ito}
\mathbb{E}\left\Vert\int_0^t\phi(s) dW(s)\right\Vert^2=\int_0^t\mathbb{E}\left\Vert \phi(s)\right\Vert^2_{L^0_2}ds=\int_0^t\mathbb{E}\left\Vert\phi(s) Q^{1/2}\right\Vert^2_{\mathcal{L}_2(H)}ds,\quad t\in[0,T],
\end{eqnarray}
}
see e.g., \cite[Step 2 in Section 2.3.2]{Prato}  or \cite[Proposition 2.3.5]{Prevot}.

In the rest of this paper, we consider $H=L^2(\Lambda)$. 
To guarantee the existence of a unique mild solution of \eqref{model1} and for the purpose of the  convergence analysis, we make the following  assumptions.
\begin{assumption}
 \label{assumption1}
The initial data $X_0 : \Omega\longrightarrow H$ is assumed to be measurable and $X_0\in L^4\left(\Omega , \mathcal{D}\left((-A(0))^{\beta/2}\right)\right)$, $0\leq \beta\leq 2$.
 \end{assumption}
 We equip $V_{\alpha}(t) : = \mathcal{D}\left((-A(t))^{\alpha/2}\right)$, $\alpha\in \mathbb{R}$ with the norm $\Vert u\Vert_{\alpha,t} := \Vert (-A(t))^{\alpha/2}u\Vert$.
Due to \eqref{domaine}, \eqref{equivnorme1} and for the seek of  ease notations, we simply write $V_{\alpha}$ and $\Vert .\Vert_{\alpha}$.
We follow \cite{Seidler,WangR,Tambue1,Wang1} and  assume  that the nonlinear operator $F$ satisfies the following Lipschitz condition. 
\begin{assumption}
\label{assumption3}
The nonlinear operator $F : [0,T]\times H\longrightarrow H$ is assumed to be $\beta/2$-H\"{o}lder continuous with respect to the first variable
and Lipschitz continuous with respect to the second variable, i.e. there exists a positive constant $K_3$ such that 
\begin{eqnarray}
\Vert F(s,0)\Vert \leq K_3, \quad \Vert F(t, u)-F(s,v)\Vert \leq K_3\left(\vert t-s\vert^{\beta/2}+\Vert u-v\Vert\right),  \quad s, t\in[0,T],\quad u, v\in H.
\end{eqnarray} 
We also assume the drift function to be twice  differentiable with bounded derivative, i.e.  there  exists a constant $K_1>0$ such that
\begin{eqnarray}
\Vert F'(t,v)\Vert_{L(H)}&\leq& K_1, \quad \forall\, v\in H,\quad t\in[0,T]\\
 \Vert F''(t,u)(v_1,v_2)\Vert_{-\eta}&\leq& K_1\Vert v_1\Vert.\Vert v_2\Vert, \quad u, v_1,v_2\in H,\quad\text{for some }\, \eta\in[1,2),\quad t\in[0,T],
\end{eqnarray}
where the Fr\'{e}chet  first and second  order derivatives are taken respect  to the second variable.
\end{assumption}
\begin{assumption}
\label{assumption4}
We assume  the covariance operator $Q : H\longrightarrow H$ to satisfy
\begin{eqnarray}
\left\Vert (-A(0))^{\frac{\beta-1}{2}}Q^{\frac{1}{2}}\right\Vert_{\mathcal{L}_2(H)}<\infty, 
\end{eqnarray}
where $\beta$ is defined in Assumption \ref{assumption1}.
\end{assumption}
As in \cite{Gonzalezal,Hippal,Gonzalez}, we make the following assumptions on the family of  linear operator $A(t)$.
\begin{assumption}
\label{assumption2}
\begin{itemize}
\item[(i)]
We assume that $\mathcal{D}\left(A(t)\right)=D$, $0\leq t\leq T$ and the family of linear operators 
$A(t) : D\subset H\longrightarrow H$ to be uniformly sectorial on $0\leq t\leq T$, i.e. there exist constants $c>0$ and $\theta\in\left(\frac{1}{2}\pi,\pi\right)$ such that
\begin{eqnarray}
\left\Vert \left(\lambda\mathbf{I}-A(t)\right)^{-1}\right\Vert_{L(L^2(\Lambda))}\leq \frac{c}{\vert \lambda\vert},\quad \lambda\in S_{\theta},
\end{eqnarray}
where $S_{\theta}:=\left\{\lambda\in\mathbb{C} : \lambda=\rho e^{i\phi}, \rho>0, 0\leq \vert \phi\vert\leq \theta\right\}$. As in \cite{Hippal},
by a standard scaling argument, we assume $-A(t)$ to be invertible with bounded inverse.
\item[(ii)]
 We require the following Lipschitz conditions respect to the time
\begin{eqnarray}
\label{conditionB}
\left\Vert \left(A(t)-A(s)\right)(-A(0))^{-1}\right\Vert_{L(H)}&\leq& K_1\vert t-s\vert,\quad s,t\in[0, T],\\
 \label{conditionBB}
\left\Vert  (-A(0))^{-1}\left(A(t)-A(s)\right)\right\Vert_{L(D,H)}&\leq& K_1\vert t-s\vert,\quad s,t\in[0, T].
\end{eqnarray}
\item[(iii)]
 As we are dealing with non smooth data, we follow  \cite{Seidler}  and  assume that 
\begin{eqnarray}
\label{domaine}
\mathcal{D}\left(\left(-A(t)\right)^{\alpha}\right)=\mathcal{D}\left(\left(-A(0)\right)^{\alpha}\right),\quad 0\leq t\leq T,\quad 0\leq \alpha\leq 1
\end{eqnarray}
and there exists a positive constant $K_2$ such that  the following estimate holds
\begin{eqnarray}
\label{equivnorme1}
K_2^{-1}\left\Vert \left(-A(0)\right)^{\alpha}u\right\Vert\leq \left\Vert (-A(t))^{\alpha}u\right\Vert\leq K_2\left\Vert (-A(0))^{\alpha}u\right\Vert,\quad t\in[0,T],\quad u\in \mathcal{D}((-A(0))^{\alpha}).
\end{eqnarray} 
\end{itemize}
\end{assumption}
\begin{remark}
\label{remark1}
As a consequence of Assumption \ref{assumption2}, for all $\alpha\geq 0$ and $\gamma\in[0,1]$, there exists a constant $C_1\geq 0$ such that 
the following estimate holds uniformly in $t\in[0,T]$
\begin{eqnarray}
\label{smooth}
\left\Vert (-A(t))^{\alpha}e^{sA(t)}\right\Vert_{L(H)}\leq C_1s^{-\alpha},s>0,\\
\label{smootha}
 \left\Vert(-A(t))^{-\gamma}\left(\mathbf{I}-e^{sA(t)}\right)\right\Vert_{L(H)}\leq C_1s^{\gamma}, \quad s\geq 0.
\end{eqnarray}
\end{remark}
\begin{proposition}
\label{proposition1}
Let $\Delta(T):=\{(t,s) : 0\leq s\leq t\leq T\}$.
Under \assref{assumption2} there exists a unique evolution system \cite[Definition 5.3, Chapter 5]{Pazy} $U : \Delta(T)\longrightarrow L(H)$ such that
\begin{itemize}
\item[(i)] There exists a positive constant $K_0$ such that
\begin{eqnarray}
\Vert U(t,s)\Vert_{L(H)}\leq K_0,\quad 0\leq s\leq t\leq T.
\end{eqnarray}
\item[(ii)] $U(.,s)\in C^1(]s,T] ; L(H))$, $0\leq s\leq T$,
\begin{eqnarray}
\frac{\partial U}{\partial t}(t,s)=-A(t)U(t,s) \quad \text{and} \quad
\Vert A(t)U(t,s)\Vert_{L(H)}\leq \frac{K_0}{t-s},\quad 0\leq s<t\leq T.
\end{eqnarray}
\item[(iii)] $U(t,.)x\in C^1([0,t[ ; H)$, $0<t\leq T$, $x\in\mathcal{D}(A(0))$ and 
\begin{eqnarray}
\frac{\partial U}{\partial s}(t,s)=-U(t,s)A(s)x \quad \text{and}\quad
 \Vert A(t)U(t,s)A(s)^{-1}\Vert_{L(H)}\leq K_0, \quad 0\leq s\leq t\leq T.
\end{eqnarray}
\end{itemize}
\end{proposition}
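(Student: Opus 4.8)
The plan is to obtain $U(t,s)$ by the classical parametrix (successive-approximation) construction of Sobolevskii and Tanabe, for which \assref{assumption2} furnishes exactly the ingredients needed; properties (i)--(iii) then come out of the construction. Equivalently, one checks that \assref{assumption2} implies the hypotheses of the parabolic existence theorem \cite[Chapter 5]{Pazy} (see also \cite{Seidler}) and quotes it. First I would record the quantitative consequences used below. After the scaling normalisation in \assref{assumption2}(i) each $-A(t)$ is boundedly invertible, and \eqref{equivnorme1} with $\alpha=1$ gives $\|(-A(t))(-A(0))^{-1}\|_{L(H)}\le K_2$ and $\|(-A(0))(-A(t))^{-1}\|_{L(H)}\le K_2$ uniformly in $t$; combining these with \eqref{conditionB} yields the Kato--Tanabe type bound $\|(A(t)-A(s))(-A(r))^{-1}\|_{L(H)}\le K_1K_2^{2}\,|t-s|$ for all $r,s,t\in[0,T]$. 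Uniform sectoriality together with the normalisation gives, uniformly in $\tau\in[0,T]$, the smoothing estimates $\|e^{\sigma A(\tau)}\|_{L(H)}\le C_1$ and $\|(-A(\tau))^{\gamma}e^{\sigma A(\tau)}\|_{L(H)}\le C_1\sigma^{-\gamma}$ for $\sigma>0$, $\gamma\ge0$; that is, \eqref{smooth}--\eqref{smootha} hold for every frozen generator.

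Next I would build $U$ from the frozen-coefficient semigroup. Put $G_0(t,s):=e^{(t-s)A(s)}$ and $R_1(t,s):=\bigl(A(t)-A(s)\bigr)G_0(t,s)$. Factoring $R_1(t,s)=\bigl[(A(t)-A(s))(-A(s))^{-1}\bigr]\,\bigl[(-A(s))e^{(t-s)A(s)}\bigr]$ and using the two bounds above gives $\|R_1(t,s)\|_{L(H)}\le K_1K_2^{2}C_1=:C$, so the Lipschitz assumption makes the kernel $R_1$ \emph{bounded} on $\Delta(T)$ (rather than weakly singular, as in the merely H\"older case). I would then solve the Volterra equation $R(t,s)=R_1(t,s)+\int_s^t R_1(t,\tau)R(\tau,s)\,d\tau$ by the Neumann series $R=\sum_{n\ge1}R_n$ with $R_{n+1}(t,s):=\int_s^tR_1(t,\tau)R_n(\tau,s)\,d\tau$, which converges absolutely since $\|R_n(t,s)\|_{L(H)}\le C^{n}(t-s)^{n-1}/(n-1)!$, and set $U(t,s):=G_0(t,s)+\int_s^tG_0(t,\tau)R(\tau,s)\,d\tau$. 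Property (i) is then immediate: $\|U(t,s)\|_{L(H)}\le C_1+C_1\sum_{n\ge1}\int_s^t\|R_n(\tau,s)\|_{L(H)}\,d\tau\le C_1\bigl(1+CT e^{CT}\bigr)=:K_0$.

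For (ii)--(iii) I would differentiate the defining formula: differentiating $G_0(t,s)$ in $t$ produces $A(s)e^{(t-s)A(s)}$, differentiating the integral term produces $R(t,s)+\int_s^tA(\tau)G_0(t,\tau)R(\tau,s)\,d\tau$, and the Volterra identity defining $R$ is precisely what makes these recombine into $A(t)U(t,s)$, yielding $U(\cdot,s)\in C^1(]s,T];L(H))$ with $\partial_t U(t,s)=A(t)U(t,s)$ once $A(t)U(t,s)$ is known to be well defined with the stated bound. Establishing $\|A(t)U(t,s)\|_{L(H)}\le K_0/(t-s)$ is the technical heart and the step I expect to be the main obstacle, since $A(t)$ cannot be moved directly under $\int_s^tG_0(t,\tau)R(\tau,s)\,d\tau$: the norm $\|A(t)e^{(t-\tau)A(\tau)}\|_{L(H)}\sim(t-\tau)^{-1}$ fails to be integrable up to $\tau=t$. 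I would resolve this by the standard decomposition
\[
A(t)e^{(t-\tau)A(\tau)}R(\tau,s)=A(t)e^{(t-\tau)A(\tau)}\bigl(R(\tau,s)-R(t,s)\bigr)+A(t)e^{(t-\tau)A(t)}R(t,s)+A(t)\bigl(e^{(t-\tau)A(\tau)}-e^{(t-\tau)A(t)}\bigr)R(t,s),
\]
treating the first term via H\"older continuity of $\tau\mapsto R(\tau,s)$ (which $R$ inherits from $R_1$, itself H\"older by \eqref{conditionB} and the smoothing estimates), the second via $\int_s^tA(t)e^{(t-\tau)A(t)}\,d\tau=e^{(t-s)A(t)}-\mathbf{I}$, and the third via $e^{\sigma A(\tau)}-e^{\sigma A(t)}=\int_0^\sigma e^{(\sigma-r)A(t)}(A(\tau)-A(t))e^{rA(\tau)}\,dr$ together with \eqref{conditionB} and fractional smoothing. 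A mirror computation, differentiating $U(t,s)(-A(s))^{-1}$ in $s$ and using \eqref{conditionBB}, gives (iii) together with $\|A(t)U(t,s)A(s)^{-1}\|_{L(H)}\le K_0$. Finally, uniqueness is the usual cocycle argument: if $\widetilde U$ is another evolution system satisfying (ii)--(iii), then for fixed $r<t$ the map $s\mapsto\widetilde U(t,s)U(s,r)$ is $C^1$ on $(r,t)$ with vanishing derivative, hence constant, so $\widetilde U(t,r)=U(t,r)$.
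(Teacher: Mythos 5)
Your proposal is correct and is essentially the paper's approach: the paper proves this proposition by citing \cite[Theorem 6.1, Chapter 5]{Pazy}, and what you have written out is precisely the Sobolevskii--Tanabe parametrix construction underlying that theorem, together with the (routine) verification that \assref{assumption2} supplies its hypotheses. The only cosmetic discrepancy is the sign in $\partial_t U=-A(t)U(t,s)$, which the paper inherits from Pazy's convention for $u'+A(t)u=0$ and which does not affect the argument.
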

\begin{proof}
See \cite[Theorem 6.1, Chapter 5]{Pazy}.
\end{proof}
\begin{theorem}
\label{theorem1} Let  \asssref{assumption1}, \ref{assumption3} and \ref{assumption2} (i)-(ii)   
be fulfilled. Then the non-autonomous problem \eqref{model1} has a unique mild solution $X(t)$, which takes the following form
\begin{eqnarray}
\label{mild1}
X(t)=U(t,0)X_0+\int_0^tU(t,s)F(s,X(s))ds+\int_0^tU(t,s)dW(s),
\end{eqnarray} 
where $U(t,s)$ is the evolution system of  \propref{proposition1}.
Moreover, there exists a positive constant $K_4$ such that 
\begin{eqnarray}
\label{borne1}
\sup_{0\leq t\leq T}\Vert X(t)\Vert_{L^2\left(\Omega, \mathcal{D}((-A(0))^{\beta/2}))\right)}\leq K_4\left(1+\Vert X_0\Vert_{L^2\left(\Omega,\mathcal{D}((-A(0))^{\beta/2}\right)}\right).
\end{eqnarray}
\end{theorem}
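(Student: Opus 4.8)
The plan is to establish well-posedness by a Banach fixed point argument in a space of square-integrable predictable processes, and then to obtain the a priori bound \eqref{borne1} by applying $(-A(0))^{\beta/2}$ to the mild formulation \eqref{mild1}, estimating the three resulting terms, and closing the argument with a weakly singular Gronwall inequality. For existence and uniqueness, I would work on the Banach space $\mathcal{H}_T$ of $H$-valued predictable processes $Y$ with $\sup_{0\le t\le T}\Vert Y(t)\Vert_{L^2(\Omega,H)}<\infty$ and consider $(\Phi Y)(t):=U(t,0)X_0+\int_0^tU(t,s)F(s,Y(s))\,ds+\int_0^tU(t,s)\,dW(s)$. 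The stochastic convolution is well defined in $L^2(\Omega,H)$: by It\^o's isometry \eqref{ito}, \eqref{chow1} and \assref{assumption4}, $\mathbb{E}\Vert\int_0^tU(t,s)\,dW(s)\Vert^2=\int_0^t\Vert U(t,s)Q^{1/2}\Vert_{\mathcal{L}_2(H)}^2\,ds\le C\int_0^t\Vert U(t,s)(-A(0))^{\frac{1-\beta}{2}}\Vert_{L(H)}^2\,ds\,\Vert(-A(0))^{\frac{\beta-1}{2}}Q^{1/2}\Vert_{\mathcal{L}_2(H)}^2<\infty$, since $\Vert U(t,s)(-A(0))^{\frac{1-\beta}{2}}\Vert_{L(H)}\le C$ when $\beta\ge1$ and $\le C(t-s)^{-\frac{1-\beta}{2}}$ with integrable singularity when $\beta<1$ (analytic smoothing of the evolution system, cf. \propref{proposition1}, \remref{remark1} and \eqref{equivnorme1}). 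Using $\Vert U(t,s)\Vert_{L(H)}\le K_0$ and the global Lipschitz property of $F$ from \assref{assumption3}, $\Phi$ is a contraction on $\mathcal{H}_{T_0}$ for $T_0$ small; iterating over successive subintervals yields the unique fixed point $X\in\mathcal{H}_T$, which is the mild solution \eqref{mild1}. (One could also invoke the classical theory for non-autonomous stochastic evolution equations, e.g. \cite{Seidler,Prato}.)

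For the a priori bound, applying $(-A(0))^{\beta/2}$ to \eqref{mild1} gives $(-A(0))^{\beta/2}X(t)=\mathrm{I}(t)+\mathrm{II}(t)+\mathrm{III}(t)$ with $\mathrm{I}(t)=(-A(0))^{\beta/2}U(t,0)X_0$, $\mathrm{II}(t)=\int_0^t(-A(0))^{\beta/2}U(t,s)F(s,X(s))\,ds$ and $\mathrm{III}(t)=\int_0^t(-A(0))^{\beta/2}U(t,s)\,dW(s)$. I would bound $\Vert\mathrm{I}(t)\Vert_{L^2(\Omega,H)}\le\Vert(-A(0))^{\beta/2}U(t,0)(-A(0))^{-\beta/2}\Vert_{L(H)}\Vert X_0\Vert_{L^2(\Omega,V_\beta)}$, the operator-norm factor being bounded uniformly in $t$ by the regularity of $U$ (for $\beta<2$ this is precisely the content of \lemsref{sharp} and \ref{sharpsecond}, while for $\beta=2$ it follows from \propref{proposition1}(iii) together with the norm equivalence \eqref{equivnorme1}). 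For $\mathrm{II}$, when $0\le\beta<2$ one uses $\Vert(-A(0))^{\beta/2}U(t,s)\Vert_{L(H)}\le C(t-s)^{-\beta/2}$ and $\Vert F(s,X(s))\Vert_{L^2(\Omega,H)}\le K_3\big(1+\Vert X(s)\Vert_{L^2(\Omega,H)}\big)\le C\big(1+\Vert(-A(0))^{\beta/2}X(s)\Vert_{L^2(\Omega,H)}\big)$ (Lipschitz bound from \assref{assumption3} and boundedness of $(-A(0))^{-\beta/2}$), giving $\Vert\mathrm{II}(t)\Vert_{L^2(\Omega,H)}\le C\int_0^t(t-s)^{-\beta/2}\big(1+\Vert(-A(0))^{\beta/2}X(s)\Vert_{L^2(\Omega,H)}\big)\,ds$; at the endpoint $\beta=2$ the singularity $(t-s)^{-1}$ is not integrable, so one first establishes the temporal H\"older regularity of $s\mapsto F(s,X(s))$ and then compensates by adding and subtracting $F(t,X(t))$, using the identity $\int_0^tU(t,s)A(s)\,ds=U(t,0)-\mathbf{I}$ and the Lipschitz-in-time conditions \eqref{conditionB}--\eqref{conditionBB}.

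The stochastic term $\mathrm{III}$ is the crux. By It\^o's isometry \eqref{ito}, $\Vert\mathrm{III}(t)\Vert_{L^2(\Omega,H)}^2=\int_0^t\Vert(-A(0))^{\beta/2}U(t,s)Q^{1/2}\Vert_{\mathcal{L}_2(H)}^2\,ds$. The naive analytic bound $\Vert(-A(0))^{\beta/2}U(t,s)\Vert_{L(H)}\lesssim(t-s)^{-\beta/2}$ combined with \eqref{chow1} and \assref{assumption4} only produces a non-integrable factor $(t-s)^{-1}$ and is therefore insufficient. Instead I would factor $Q^{1/2}=(-A(0))^{-\frac{\beta-1}{2}}\big[(-A(0))^{\frac{\beta-1}{2}}Q^{1/2}\big]$, estimate columnwise against an orthonormal basis, and reduce to a sharp maximal $L^2$-regularity estimate for the evolution system of the form $\int_0^t\Vert(-A(0))^{\beta/2}U(t,s)(-A(0))^{-\frac{\beta-1}{2}}x\Vert^2\,ds\le C\Vert x\Vert^2$ for $x\in H$ (the non-autonomous, non-self-adjoint counterpart of $\int_0^t\Vert(-A)^{1/2}e^{rA}x\Vert^2\,dr\le\tfrac12\Vert x\Vert^2$, supplied by \lemref{fonda} and \lemsref{sharp} and \ref{sharpsecond}). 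Summing over the basis and using \assref{assumption4} then gives $\Vert\mathrm{III}(t)\Vert_{L^2(\Omega,H)}^2\le C\Vert(-A(0))^{\frac{\beta-1}{2}}Q^{1/2}\Vert_{\mathcal{L}_2(H)}^2<\infty$, uniformly in $t\in[0,T]$.

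Collecting the three estimates yields $\Vert(-A(0))^{\beta/2}X(t)\Vert_{L^2(\Omega,H)}\le C\big(1+\Vert X_0\Vert_{L^2(\Omega,V_\beta)}\big)+C\int_0^t(t-s)^{-\beta/2}\Vert(-A(0))^{\beta/2}X(s)\Vert_{L^2(\Omega,H)}\,ds$, and the weakly singular (generalized) Gronwall lemma gives \eqref{borne1}. I expect the main obstacle to be the sharp maximal-regularity estimate for $U(t,s)$ underpinning the treatment of $\mathrm{III}$ — since, unlike the autonomous self-adjoint case, $A(0)$, $A(t)$ and $U(t,s)$ do not commute and there is no spectral decomposition available — with the borderline case $\beta=2$ in $\mathrm{II}$ being a secondary technical difficulty; throughout, the non-commutativity is controlled via the Lipschitz and norm-equivalence conditions of \assref{assumption2}.
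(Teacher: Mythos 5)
The paper does not actually prove this theorem: its entire proof is the citation ``See \cite[Theorem 1.3]{Seidler}'', so your self-contained route (Banach fixed point in a space of predictable $L^2(\Omega,H)$-valued processes, then an a priori estimate on $(-A(0))^{\beta/2}X(t)$ closed by a weakly singular Gronwall inequality) is necessarily a different one. The architecture is the standard one and is sound: the contraction argument is routine under the global Lipschitz condition of \assref{assumption3} and the uniform boundedness of $U(t,s)$ from \propref{proposition1}, and your identification of the stochastic convolution as the crux is exactly right --- the naive factorization through $(-A(0))^{(1-\beta)/2}$ always yields the kernel $(t-s)^{-1}$, whatever the value of $\beta$, so \eqref{borne1} cannot be obtained by operator-norm smoothing alone. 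This is the same difficulty the paper confronts later for the discretized problem.

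The gap is that the decisive estimate, $\int_0^t\Vert(-A(0))^{\beta/2}U(t,s)(-A(0))^{-\frac{\beta-1}{2}}x\Vert^2\,ds\leq C\Vert x\Vert^2$, is asserted rather than proved, and the lemmas you invoke for it do not deliver it in the form needed. \lemsref{sharp}, \ref{sharpsecond} and \ref{fonda} concern the spatially discretized operators $A_h(t)$ and the discrete evolution system $U_h(t,s)$, and, more importantly, they are proved only under the additional spectral \assref{assumption5} on the self-adjoint part of $A(t)$ --- an assumption that is \emph{not} among the hypotheses of \thmref{theorem1}. For a family satisfying only \assref{assumption2} (i)--(ii) there is no eigenfunction expansion with which to run the Kruse--Larsson computation, and without the sharp integral estimate your bound on $\mathrm{III}$ fails by a logarithm at the exact calibration of \assref{assumption4} (which, incidentally, you must also add to your hypotheses: the theorem as stated does not list it, though it is clearly needed for \eqref{borne1}). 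To close this step one must either prove a continuous analogue of \lemref{sharp} (e.g.\ via the Zassenhaus/Baker--Campbell--Hausdorff reduction to the self-adjoint part, as the paper does for $U_h$), or use the factorization method, which is what the cited result of \cite{Seidler} actually does. A secondary, fixable point: the uniform bound on $(-A(0))^{\beta/2}U(t,0)(-A(0))^{-\beta/2}$ in your term $\mathrm{I}$ comes from \propref{proposition1} (iii) together with the norm equivalence \eqref{equivnorme1}, not from \lemsref{sharp} and \ref{sharpsecond}, which are integral estimates of a different nature.
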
 
\begin{proof}
See \cite[Theorem 1.3]{Seidler}.
\end{proof}

\subsection{Fully discrete scheme and main result}
\label{fullyscheme}
In the rest of this paper, we consider the family of linear operators $A(t)$ to be of second order of the following form
\begin{eqnarray}
\label{family}
A(t)u=\sum_{i,j=1}^d\frac{\partial}{\partial x_i}\left(q_{ij}(x,t)\frac{\partial u}{\partial x_j}\right)-\sum_{j=1}^dq_j(x,t)\frac{\partial u}{\partial x_j}.
\end{eqnarray}
We require the coefficients $q_{i,j}$ and $q_j$ to be smooth functions on the variable $x\in\overline{\Lambda}$ and H\"{o}lder-continuous with respect to $t\in[0,T]$. 
We further assume that there exists a positive constant $c_1$ such that the following  ellipticity condition holds
\begin{eqnarray}
\label{ellip}
\sum_{i,j=1}^dq_{ij}(x,t)\xi_i\xi_j\geq c_1\vert \xi\vert^2, \quad (x,t)\in\overline{\Lambda}\times [0,T].
\end{eqnarray}
 Under the above assumptions on $q_{ij}$ and $q_j$, it is  well known  that  the family of linear operators defined in \eqref{family} 
 fulfills   \assref{assumption2} (i)-(ii)  with $D=H^2(\Lambda)\cap H^1_0(\Lambda)$, see     \cite[Section 7.6]{Pazy} or \cite[Section 5.2]{Tanabe}.
 The above assumptions on $q_{ij}$ and $q_j$ also imply that  \assref{assumption2} (iii) is fulfilled, see e.g., \cite[Example 6.1]{Seidler} or \cite{Aman,Seely}.

As in  \cite{Fujita,Lord1}, we introduce two spaces $\mathbb{H}$ and $V$, such that $\mathbb{H}\subset V$, that depend on the boundary conditions 
for the domain of the operator $-A(t)$ and the corresponding bilinear form. For example, for Dirichlet  boundary conditions we take 
\begin{eqnarray}
V=\mathbb{H}=H^1_0(\Lambda)=\{v\in H^1(\Lambda) : v=0\quad \text{on}\quad \partial \Lambda\}.
\end{eqnarray}
For Robin  boundary condition and  Neumann  boundary condition, which is a special case of Robin boundary condition ($\alpha_0=0$), we take $V=H^1(\Lambda)$ and
\begin{eqnarray}
\mathbb{H}=\{v\in H^2(\Lambda) : \partial v/\partial v_A +\alpha_0v=0,\quad \text{on}\quad \partial \Lambda\}, \quad \alpha_0\in\mathbb{R}.
\end{eqnarray}
Using  Green's formula and the boundary conditions, we obtain the corresponding bilinear form associated to $-A(t)$  
\begin{eqnarray}
a(t)(u,v)=\int_{\Lambda}\left(\sum_{i,j=1}^dq_{ij}(x,t)\dfrac{\partial u}{\partial x_i}\dfrac{\partial v}{\partial x_j}+\sum_{i=1}^dq_i(x,t)\dfrac{\partial u}{\partial x_i}v\right)dx, \quad u,v\in V,
\end{eqnarray}
for Dirichlet boundary conditions and  
\begin{eqnarray}
a(t)(u,v)=\int_{\Lambda}\left(\sum_{i,j=1}^dq_{ij}(x,t)\dfrac{\partial u}{\partial x_i}\dfrac{\partial v}{\partial x_j}+\sum_{i=1}^dq_i(x,t)\dfrac{\partial u}{\partial x_i}v\right)dx+\int_{\partial\Lambda}\alpha_0uvdx,\quad u,v\in V.
\end{eqnarray}
for Robin  and Neumann boundary conditions. 
Using  G\aa rding's inequality, it holds that there exist two constants $\lambda_0$ and $c_0$ such that
\begin{eqnarray}
a(t)(v,v)\geq \lambda_0\Vert v \Vert^2_{1}-c_0\Vert v\Vert^2, \quad  v\in V,\quad t\in[0,T].
\end{eqnarray}
By adding and subtracting $c_{0}u $ on the right hand side of \eqref{model1}, we obtain a new family of linear operators that we still denote by  $A(t)$.
Therefore the  new corresponding   bilinear form associated to $-A(t)$ still denoted by $a(t)$ satisfies the following coercivity property
\begin{eqnarray}
\label{ellip2}
a(t)(v,v)\geq \; \lambda_0\Vert v\Vert_{1}^{2},\;\;\;\;\; v \in V,\quad t\in[0,T].
\end{eqnarray}
Note that the expression of the nonlinear term $F$ has changed as we have included the term $-c_{0}u$
in the new nonlinear term that we still denote by $F$.

The coercivity property \eqref{ellip2} implies that $A(t)$ and $A_h(t)$ \footnote{ Defined in \eqref{discreteoper}} 
are sectorial on $L^2(\Lambda)$ (uniformly in $h$), see e.g., \cite{Larsson2}. 
Therefore $A_h(t)$ and   $A(t)$ generate analytic semigroups denoted respectively by $S_{h,t}(s):=e^{sA_h(t)}$ and   $S_t(s)=e^{s A(t)}$  on $L^{2}(\Lambda)$  such that \cite{Henry}
\begin{eqnarray}
S_t(s)= e^{s A(t)}=\dfrac{1}{2 \pi i}\int_{\mathcal{C}} e^{ s\lambda}(\lambda I - A(t))^{-1}d \lambda,\;\;\;\;\;\;\;
\;s>0,
\end{eqnarray}
where $\mathcal{C}$  denotes a path that surrounds the spectrum of $A(t)$.
The coercivity  property \eqref{ellip2} also implies that $-A(t)$ is a positive operator and its fractional powers are well defined and 
  for any $\alpha>0$, we have
\begin{equation}
\label{fractional}
 \left\{\begin{array}{rcl}
         (-A(t))^{-\alpha} & =& \frac{1}{\Gamma(\alpha)}\displaystyle\int_0^\infty  s^{\alpha-1}{\rm e}^{sA(t)}ds,\\
         (-A(t))^{\alpha} & = & ((-A(t))^{-\alpha})^{-1},
        \end{array}\right.
\end{equation}
where $\Gamma(\alpha)$ is the Gamma function \cite{Henry}.  
 The domain  of $(-A(t))^{\alpha/2}$  are  characterized in \cite{Fujita,Elliot,Larsson2}  for $1\leq \alpha\leq 2$  with equivalence of norms as follows
\begin{eqnarray}
\mathcal{D}((-A(t))^{\alpha/2})=H^1_0(\Lambda)\cap H^{\alpha}(\Lambda)\hspace{1cm} 
\text{(for Dirichlet boundary condition)}\nonumber\\
\mathcal{D}(-A(t))=\mathbb{H},\quad \mathcal{D}((-A(t))^{1/2})=H^1(\Lambda)\hspace{0.5cm} \text{(for Robin boundary condition)}\nonumber\\
\Vert v\Vert_{H^{\alpha}(\Lambda)}\equiv \Vert ((-A(t))^{\alpha/2}v\Vert:=\Vert v\Vert_{\alpha},\quad \forall v\in \mathcal{D}((-A(t))^{\alpha/2}).\nonumber
\end{eqnarray}
The characterization of $\mathcal{D}((-A(t))^{\alpha/2})$ for $0\leq \alpha<1$ can be found in  \cite[Theorem 2.1 \& Theorem 2.2]{Nambu}.

Now, we turn our attention to the discretization of the problem \eqref{model1}.
We start by splitting the domain $\Lambda$ in finite  triangles. Let $\mathcal{T}_h$ be the triangulation with maximal length $h$ satisfying the usual
regularity assumptions, and $V_h\subset V$ be the space of continuous functions that are piecewise linear over the triangulation $\mathcal{T}_h$.
We consider the projection $P_h$ from $H=L^2(\Lambda)$ to $V_h$ defined for every $u\in H$ by 
\begin{eqnarray}
\label{proj1}
\langle P_hu, \chi\rangle_H=\langle u,\chi\rangle_H, \quad \phi, \chi \in V_h.
\end{eqnarray}
 For all $t\in[0,T]$, the discrete operator $A_h(t) :V_h\longrightarrow V_h$ is defined by 
 \begin{eqnarray}
 \label{discreteoper}
 \langle A_h(t)\phi,\chi\rangle_H=\langle A(t)\phi,\chi\rangle_H=-a(t)(\phi,\chi), \quad \phi,\chi\in V_h.
 \end{eqnarray}
 The coercivity property \eqref{ellip2}  implies that there exist constants $C_2>0$ and $\theta\in(\frac{1}{2}\pi,\pi)$ such that 
 \begin{eqnarray}
 \label{sectorial1}
 \Vert (\lambda\mathbf{I}-A_h(t))^{-1}\Vert_{L(H)}\leq \frac{C_2}{\vert \lambda\vert},\quad \lambda \in S_{\theta}
 \end{eqnarray}
 holds uniformly for $h>0$ and $t\in[0,T]$. See e.g.,  \cite{Larsson2} (2.9) or \cite{Fujita,Henry}. 
 The coercivity property \eqref{ellip2} also implies that the smooth properties \eqref{smooth}  and \eqref{smootha} hold 
 for $A_h$ uniformly on $h>0$ and $t\in[0,T]$, i.e. for all $\alpha\geq 0$ and $\gamma\in[0,1]$, there exist a positive constant $C_3$ 
 such that the following estimates hold uniformly on $h>0$ and $t\in[0,T]$, see e.g. \cite{Fujita,Henry}
 \begin{eqnarray}
 \label{smooth2}
 \left\Vert(-A_h(t))^{\alpha}e^{sA_h(t)}\right\Vert_{L(H)}\leq C_3s^{-\alpha}, \quad s>0, \\
 \label{smooth1}
  \left\Vert (-A_h(t))^{-\gamma}\left(\mathbf{I}-e^{sA_h(t)}\right)\right\Vert_{L(H)}\leq C_3s^{\gamma}, \quad s\geq 0.
 \end{eqnarray}
 The semi-discrete version of  \eqref{model1} consists of finding $X^h(t)\in V_h$, $t\in[0,T]$ such that
 \begin{eqnarray}
 \label{semi1}
 dX^h(t)=\left[A_h(t)X^h(t)+P_hF\left(t,X^h(t)\right)\right]dt+P_hdW(t),\quad t\in(0,T],\quad X^h(0)=P_hX_0.
 \end{eqnarray}
 Throughout this paper we take $t_m=m\Delta t\in[0,T]$, where $T=M\Delta t$ for $m, M\in\mathbb{N}$, $m\leq M$.
Following \cite{Tambue3}, we have the following fully discrete scheme for \eqref{model1}, called stochastic Magnus-type integrator (SMTI) for SPDEs 
\begin{eqnarray}
\label{scheme3}
X^h_{m+1}=e^{\Delta tA_{h,m}}X^h_m+\Delta t\varphi_1(\Delta tA_{h,m})P_hF\left(t_m, X^h_m\right)+e^{\Delta tA_{h,m}}P_h\Delta W_m, \quad m\geq 0,\quad X^h_0=P_hX_0,
\end{eqnarray}
 where $
 \Delta W_m :=W_{(m+1)\Delta t}-W_{m\Delta t}$,   $ A_{h,m}:=A_h(t_m) $
 and the linear operator $\varphi_1(\Delta t A_{h,m})$ is given  by 
\begin{eqnarray}
\varphi_1(\Delta tA_{h,m}):=\dfrac{1}{\Delta t}\int_0^{\Delta t}e^{(\Delta t-s)A_{h,m}}ds.
\end{eqnarray}
  Note that the numerical scheme \eqref{scheme3} can be written in the following integral form, useful for the error analysis
 \begin{eqnarray}
 \label{scheme4}
 X^h_{m+1}=e^{\Delta tA_{h,m}}X^h_m+\int_{t_m}^{t_{m+1}}e^{(t_{m+1}-s)A_{h,m}}P_hF\left(t_m, X^h_m\right)ds+\int_{t_m}^{t_{m+1}}e^{\Delta tA_{h,m}}P_hdW(s).
 \end{eqnarray}
Note also that an equivalent formulation of  the numerical scheme \eqref{scheme3}, easy for simulation is given by
\begin{eqnarray}
\label{scheme5}
X^h_{m+1}=X^h_m+P_h\Delta W_m+\Delta t\varphi_1(\Delta tA_{h,m})\left[A_{h,m}\left(X^h_m+P_h\Delta W_m\right)+P_hF\left(t_m, X^h_m\right)\right].
\end{eqnarray}
The following assumption will be needed in our convergence estimate to achieve optimal convergence order in time without any logarithmic reduction.
\begin{assumption}
\label{assumption5}
Let $A(t)=A^s(t)+A^{ns}(t)$, where $A^s(t)$ and $A^{ns}(t)$ are respectively the self-adjoint and the non self-adjoint parts of $A(t)$.
We assume that  the family $(\lambda_n(t))_{n\in \mathbb{N}}$ 
of positive eigenvalues of $-A^s(t)$ corresponding to the eigenvectors $(e_n(t))_{n\in \mathbb{N}}$ are such that for $x\in H$  
\begin{eqnarray}
 \label{supcond}
 \underset{0\leq t\leq T}{\sup} \lambda_n(t) < C(n), \,\quad  \underset{0\leq t\leq T}{\sup} (e_n(t),x) < C_1(x,n).
 \end{eqnarray}
 where  $C (n)$ and $C_1=C_1(x, n)$  are two positive constants.
 \end{assumption}
\begin{remark}
 Typical examples which fulfilled Assumption \ref{assumption5} are linear operators $A(t)$ defined in \eqref{family}
 with  bounded coefficients such that  $q_{ii}(x,t) > 0$ and $q_{ij}(x,t) = 0,\,\,i\neq j$ with \eqref{supcond}.
 Note that  \assref{assumption5} coincides with the assumptions made in \cite{KruseLarsson,Kruse,Wang1} on the constant self-adjoint operator $A$,
 where the authors also achieved optimal convergence orders. Note that these optimal convergence orders were due to the sharp integral estimate \cite{KruseLarsson}.
 In the case of non-autonomous  and non necessarily self adjoint operator,  \lemsref{sharp} and \ref{sharpsecond} are keys ingredients to achieve optimal
 convergence orders with no reduction. 
 \end{remark}
  In the rest of this paper $C$ denotes a generic constant that may change from one place to another. The numerical method being built, 
  we can now state its strong convergence result toward the mild solution, which is the  main result of this work.
\begin{theorem}\textbf{[Main result]}
\label{mainresult1}
Let  \asssref{assumption1}-\ref{assumption2} and \ref{assumption5} be fulfilled. Then the following error estimate holds
\begin{itemize}
\item[(i)] If $0\leq \beta<2$ then 
\begin{eqnarray}
\left(\mathbb{E}\Vert X(t_m)-X^h_m\Vert^2\right)^{1/2}\leq C\left(h^{\beta}+\Delta t^{\beta/2}\right).
\end{eqnarray}
\item[(ii)] If $\beta=2$ then 
\begin{eqnarray}
\left(\mathbb{E}\Vert X(t_m)-X^h_m\Vert^2\right)^{1/2}\leq C\left[h^{2}\left(1+\max\left(\ln\left(\frac{t_m}{h^2}\right),0\right)\right)+\Delta t\right].
\end{eqnarray}
\end{itemize}
\end{theorem}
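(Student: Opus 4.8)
The plan is to split the error via the triangle inequality into the space discretization error and the time discretization error,
\[
\left(\mathbb{E}\Vert X(t_m)-X^h_m\Vert^2\right)^{1/2} \le \left(\mathbb{E}\Vert X(t_m)-X^h(t_m)\Vert^2\right)^{1/2} + \left(\mathbb{E}\Vert X^h(t_m)-X^h_m\Vert^2\right)^{1/2},
\]
where $X^h$ is the semi-discrete solution of \eqref{semi1}. For the first term one uses the mild representation of $X$ from \eqref{mild1} and the analogous one for $X^h$, writing the difference as a sum of three pieces (initial data, drift integral, stochastic convolution) and estimating each with the error operator $U(t,s)-U^h(t,s)P_h$. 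The key ingredients here are the smoothing properties \eqref{smooth}--\eqref{smootha} and their discrete analogues \eqref{smooth2}--\eqref{smooth1}, the regularity bound \eqref{borne1}, Assumption \ref{assumption1} on $X_0$, the Lipschitz/H\"older bounds of Assumption \ref{assumption3}, It\^o's isometry \eqref{ito} together with Assumption \ref{assumption4} on $Q$, and — crucially for the stochastic convolution at the endpoint $\beta=2$ — the sharp estimates announced as \lemref{sharp} and \ref{sharpsecond}, which is where the logarithmic factor $\ln(t_m/h^2)$ enters. A discrete Gr\"onwall argument then closes the estimate on the drift part, yielding the $\mathcal{O}(h^\beta)$ (resp. $\mathcal{O}(h^2(1+\ln(t_m/h^2)))$) contribution.

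For the time discretization error $X^h(t_m)-X^h_m$, I would iterate the integral form \eqref{scheme4} of the scheme and compare term by term with the mild form of $X^h$, so that the error at $t_{m+1}$ is a sum over $j\le m$ of local defects of three types: (a) the discrepancy between the frozen-coefficient semigroup $e^{(t_{m+1}-s)A_{h,j}}$ built from $A_{h,j}=A_h(t_j)$ and the true semi-discrete evolution operator $U^h(t_{m+1},s)$ — handled by the Lipschitz-in-time conditions \eqref{conditionB}--\eqref{conditionBB} and Proposition \ref{proposition1}; (b) the drift quadrature error $\int_{t_j}^{t_{j+1}} \big(U^h(t_{m+1},s)F(s,X^h(s)) - e^{(t_{m+1}-s)A_{h,j}}P_h F(t_j,X^h_j)\big)ds$, which is where Taylor's formula in Banach space for $F$ (the twice-differentiability with the bounds on $F'$ and $F''(\cdot)(\cdot,\cdot)_{-\eta}$ from Assumption \ref{assumption3}) is invoked to upgrade the naive order $1/2$ to order $1$; and (c) the noise defect $\int_{t_j}^{t_{j+1}} \big(U^h(t_{m+1},s) - e^{\Delta t A_{h,j}}\big)P_h\,dW(s)$, estimated via It\^o's isometry combined with the novel technical noise estimates referenced in \lemref{fonda} and \secref{Noiseestimate}, Assumption \ref{assumption4}, and Assumption \ref{assumption5} (the spectral-type bound \eqref{supcond} on $-A^s(t)$) to obtain the full order $\Delta t^{\beta/2}$ — again with a logarithm only at $\beta=2$ if at all. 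After collecting these, a discrete Gr\"onwall inequality absorbs the terms containing the running error $\Vert X^h(t_j)-X^h_j\Vert$ coming from the Lipschitz bound on $F$ in (b).

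\textbf{Main obstacle.} The hard part will be items (c) and the stochastic-convolution part of the space error at the borderline $\beta=2$: unlike the autonomous self-adjoint case, one cannot diagonalize $A(t)$ uniformly in $t$, so the sharp integral estimates of \cite{KruseLarsson} are not directly available. The entire gain over the sub-optimal $1-\epsilon$ order of \cite{Tambue1,Jentzenal} rests on replacing that spectral argument by the tailored bounds of \lemsref{sharp}, \ref{sharpsecond} and \lemref{fonda}, controlling quantities such as $\big\Vert (-A_h(t))^{1/2}\big(U^h(t,s)-e^{(t-s)A_{h,j}}\big)P_h Q^{1/2}\big\Vert_{\mathcal{L}_2(H)}$ uniformly in $h$; making Assumption \ref{assumption5} do the work that spectral decomposition did before. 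Managing the interplay of the frozen-operator approximation with the Hilbert--Schmidt norm of the noise, while keeping all constants independent of $h$ and $\Delta t$, is the delicate point, and it is precisely there that the single surviving $\max(\ln(t_m/h^2),0)$ factor is unavoidable.
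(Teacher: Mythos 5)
Your strategy coincides with the paper's proof: the same triangle-inequality split into a space error (handled through the semi-discrete solution) and a time error obtained by iterating the scheme against the semi-discrete mild solution, the same use of Taylor's formula in Banach space for the drift quadrature, It\^{o}'s isometry combined with \lemref{fonda}, \lemsref{sharp}--\ref{sharpsecond} and \assref{assumption5} for the noise defects, and a discrete Gronwall closure. The only slight misattribution is that the logarithm at $\beta=2$ originates in the deterministic nonsmooth-data finite element estimate (\lemref{lemma3}) inside the space error, not in the sharp integral lemmas, whose role is rather to remove the $\epsilon$-loss from the temporal rate.
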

\begin{remark}
If we relax \assref{assumption5}, then we obtain the following convergence result.
\begin{itemize}
\item[(i)] If $0< \beta<2$, the following error estimate holds
\begin{eqnarray}
\label{main1}
\left(\mathbb{E}\Vert X(t_m)-X^h_m\Vert^2\right)^{1/2}\leq C\left(h^{\beta}+\Delta t^{\beta/2-\epsilon}\right),
\end{eqnarray}
where $\epsilon>0$ is a positive number small enough. 
\item[(ii)] If $\beta=2$, then the following error estimate holds
\begin{eqnarray}
\label{main2}
\left(\mathbb{E}\Vert X(t_m)-X^h_m\Vert^2\right)^{1/2}\leq C\left[h^{2}\left(1+\max\left(\ln\left(\frac{t_m}{h^2}\right),0\right)\right)+\Delta t^{1-\epsilon}\right].
\end{eqnarray}
\end{itemize}
\end{remark}
\begin{remark}
\label{remark3} Note that as in \cite{Lord1,Jentzenal,WangR}, we can use the following  approximation  
\begin{eqnarray}
\int_{t_{m-1}}^{t_m}e^{A_{h,m}(t_m-s)}P_hF\left(s,X^h(s)\right)ds\approx \int_{t_{m-1}}^{t_m}e^{A_{h,m}\Delta t}P_hF\left(t_m,X^h_m\right)ds=\Delta te^{A_{h,m}\Delta t}P_hF\left(t_m,X^h_m\right).
\end{eqnarray}
This yields the following numerical Magnus-type integrator scheme
\begin{eqnarray}
\label{schemeanto}
Y^h_{m+1}=e^{\Delta tA_{h,m}}\left[Y^h_m+\Delta tP_hF\left(t_m, Y^h_m\right)+P_h\Delta W_m\right],\quad Y^h_0=P_hX_0.
\end{eqnarray}
Note that the convergence result in Theorem \ref{mainresult1} also holds for the numerical scheme \eqref{schemeanto}. The proof is similar of that of  \thmref{mainresult1}.
\end{remark}
\section{Proof of the main result}
\label{convergenceproof}
The proof of  the main result needs some preparatory results.
\subsection{Preparatory results}
\label{prepa}
The following lemma will be useful in our convergence proof. Its proof can be found in \cite{Tambue2}.
\begin{lemma}
\label{lemma0}
For any $\gamma\in[0,1]$, the following equivalence of norms holds  uniformly in $h>0$ and $t\in[0,T]$.
\begin{eqnarray}
\label{equidiscrete1}
K^{-1}\Vert (-(A_h(0))^{-\gamma}v\Vert\leq \Vert ((-A_h(t))^{-\gamma}v\Vert\leq K\Vert ((-A_h(0))^{-\gamma}v\Vert,\quad v\in V_h,\\
\label{equidiscrete2}
K^{-1}\Vert (-A_h(0))^{\gamma}v\Vert\leq \Vert (-A_h(t))^{\gamma}v\Vert\leq K\Vert (-A_h(0))^{\gamma}v\Vert,\quad v\in V_h.
\end{eqnarray}
\end{lemma}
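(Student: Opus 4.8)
The plan is to transfer the continuous norm equivalence \eqref{equivnorme1} of \assref{assumption2} to the finite element spaces, keeping track that all constants depend only on $h$-independent data. Since each $A_h(t)$ acts on the finite-dimensional space $V_h$, the domains of all its fractional powers coincide with $V_h$ and equality of the domains is automatic; the only content of \lemref{lemma0} is thus the \emph{norm} equivalence, uniformly in $h>0$ and $t\in[0,T]$. I would first observe that every one of the four inequalities in \eqref{equidiscrete1}--\eqref{equidiscrete2} is of the form
\begin{equation}
\sup_{h>0}\ \sup_{0\leq s,t\leq T}\ \left\Vert (-A_h(t))^{\gamma}(-A_h(s))^{-\gamma}\right\Vert_{L(H)}<\infty,\qquad \gamma\in[0,1],
\end{equation}
using that the claim is symmetric under exchanging the two time parameters, and that \eqref{equidiscrete1} follows from \eqref{equidiscrete2} by passing to the adjoints $A_h(t)^{\ast}$: the bilinear form associated with $A_h(t)^{\ast}$ is the transpose of $a(t)$, hence has coefficients of the same regularity and satisfies the same coercivity and ellipticity conditions, so the estimate for the adjoint family is of exactly the same type. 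Hence it suffices to bound $(-A_h(t))^{\gamma}(-A_h(s))^{-\gamma}$ in $L(H)$ uniformly in $h,s,t$.

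Next I would record the uniform auxiliary estimates: the coercivity \eqref{ellip2}, together with Poincar\'e's inequality on $\Lambda$ (whose constant is $h$-independent since $V_h\subset V$), makes $-A_h(t)$ boundedly invertible with $\Vert(-A_h(t))^{-1}\Vert_{L(H)}\leq C$ uniformly in $h,t$; combined with the uniform sectoriality \eqref{sectorial1} this gives $\Vert(\mu\mathbf{I}-A_h(t))^{-1}\Vert_{L(H)}\leq C(1+\mu)^{-1}$ for all $\mu\geq 0$, as well as the smoothing estimates \eqref{smooth2}--\eqref{smooth1}, all uniform in $h,t$. The endpoint $\gamma=0$ is trivial. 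For $\gamma=1$, the H\"older (Lipschitz) dependence on $t$ of the coefficients $q_{ij},q_j$ of \eqref{family} yields, at the discrete level, the counterpart of \eqref{conditionB}, namely $\Vert(A_h(t)-A_h(s))(-A_h(s))^{-1}\Vert_{L(H)}\leq C|t-s|$ uniformly in $h$; writing $(-A_h(t))(-A_h(s))^{-1}=\mathbf{I}-(A_h(t)-A_h(s))(-A_h(s))^{-1}$ then gives the bound $1+C|t-s|\leq 1+CT$, and the same identity with $s$ and $t$ interchanged controls $(-A_h(s))(-A_h(t))^{-1}$.

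For $\gamma\in(0,1)$ I would interpolate between these endpoints. The cleanest way is to apply the three-lines theorem to the analytic family $z\mapsto (-A_h(t))^{z}(-A_h(s))^{-z}$ on the strip $0\leq\Re z\leq 1$: on $\Re z=1$ one has the bound just obtained, while on $\Re z=0$ one uses that $-A_h(t)$ has bounded imaginary powers with an $h$-uniform estimate $\Vert(-A_h(t))^{is}\Vert_{L(H)}\leq Ce^{c|s|}$, which follows from the uniform sectoriality \eqref{sectorial1} (e.g.\ via the Dunford--Taylor representation of $(-A_h(t))^{is}$); a Phragm\'en--Lindel\"of argument then produces $\Vert(-A_h(t))^{\gamma}(-A_h(s))^{-\gamma}\Vert_{L(H)}\leq K(\gamma)$ for every $\gamma\in(0,1)$, which completes the proof. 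The main obstacle is the uniformity in $h$: because each $A_h(t)$ genuinely depends on the mesh, the continuous statement \eqref{equivnorme1} cannot merely be quoted, and one has to verify that every resolvent, fractional-power, imaginary-power and interpolation constant is controlled solely by the $h$-independent data --- the coercivity constant $\lambda_0$, the constant in \eqref{sectorial1}, and the time modulus of continuity of the coefficients of \eqref{family}. A secondary technical point is to make the discrete time-Lipschitz bound on $A_h$ rigorous without picking up negative powers of $h$ through inverse inequalities, which is done by working with the bilinear forms and the duality pairing rather than with $A_h$ directly.
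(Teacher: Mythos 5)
Your proposal is a genuinely different route from the one the paper relies on. The paper does not prove \lemref{lemma0} at all but defers to \cite{Tambue2}, where the argument is the classical real-variable one of Sobolevskii--Tanabe type: write $(-A_h(s))^{-\gamma}=\frac{\sin\pi\gamma}{\pi}\int_0^\infty\tau^{-\gamma}(\tau\mathbf{I}-A_h(s))^{-1}d\tau$, use the resolvent identity to express $(-A_h(t))^{\gamma}(-A_h(s))^{-\gamma}-\mathbf{I}$ as an integral involving $(A_h(s)-A_h(t))$ sandwiched between resolvents, and close the estimate with the uniform sectoriality \eqref{sectorial1} and the discrete time-Lipschitz bounds of \lemref{lemma0a} --- i.e.\ the same mechanism that yields the continuous statement \eqref{equivnorme1}. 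Your reduction of all four inequalities to a uniform bound on $(-A_h(t))^{\gamma}(-A_h(s))^{-\gamma}$, the passage to adjoints for \eqref{equidiscrete1}, and the $\gamma=1$ endpoint via $(-A_h(t))(-A_h(s))^{-1}=\mathbf{I}-(A_h(t)-A_h(s))(-A_h(s))^{-1}$ are all correct and clean; what your interpolation route buys is that the delicate convergence analysis of the Balakrishnan integral is replaced by a soft three-lines argument, at the cost of needing bounded imaginary powers.

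That last ingredient is where your justification has a genuine flaw: uniform resolvent sectoriality \eqref{sectorial1} does \emph{not} imply bounded imaginary powers, even on a Hilbert space, and the Dunford--Taylor representation of $(-A_h(t))^{is}$ is not absolutely convergent under resolvent bounds alone (the integrand only decays like $|\lambda|^{-1}$), so the bound $\Vert(-A_h(t))^{is}\Vert_{L(H)}\leq Ce^{c|s|}$ cannot be read off the way you indicate. The gap is repairable for these particular operators: $-A_h(t)$ is associated with a sesquilinear form that is uniformly bounded and uniformly coercive (constants $\lambda_0$ and the coefficient bounds are independent of $h$ and $t$), hence its numerical range lies in a fixed sector, and Kato's theorem on fractional powers of m-sectorial operators gives $\Vert(-A_h(t))^{is}\Vert_{L(H)}\leq e^{\omega\vert s\vert}$ with $\omega$ depending only on that sector angle. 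With this substitution your Phragm\'en--Lindel\"of step is valid and the proof closes; without it, the interpolation endpoint at $\Re z=0$ is unjustified. You should also state explicitly that the $\gamma=1$ endpoint is exactly \eqref{ref3} of \lemref{lemma0a} rather than re-deriving it, since, as you note yourself, obtaining it from the bilinear forms without inverse inequalities requires a duality argument that your sketch does not actually carry out.
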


\begin{lemma}
\label{lemma1a}
Under    \asssref{assumption4} and \ref{assumption2} (iii), the following estimate holds
\begin{eqnarray}
\left\Vert (-A_h(t))^{\frac{\beta-1}{2}}P_hQ^{\frac{1}{2}}\right\Vert_{\mathcal{L}_2(H)}<C,\quad t\in[0,T],\quad h>0.
\end{eqnarray}
\end{lemma}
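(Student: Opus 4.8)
The plan is to reduce the claim about the discrete operator $A_h(t)$ to the corresponding statement for the continuous operator $A(0)$ given in \assref{assumption4}, using the equivalence of norms of \lemref{lemma0} together with uniform boundedness of the projection $P_h$ in the appropriate fractional-power norms. First I would use \lemref{lemma0} (inequality \eqref{equidiscrete2} applied with $\gamma=(\beta-1)/2$ when $\beta\ge 1$, so that the exponent is nonnegative, and \eqref{equidiscrete1} with $\gamma=(1-\beta)/2$ when $\beta<1$) to replace $(-A_h(t))^{(\beta-1)/2}$ by $(-A_h(0))^{(\beta-1)/2}$ up to the multiplicative constant $K$; since the Hilbert--Schmidt norm is unchanged when we compose on the left with a bounded operator of norm $\le K$ (cf.\ \eqref{chow1}), this gives
\begin{eqnarray*}
\left\Vert (-A_h(t))^{\frac{\beta-1}{2}}P_hQ^{\frac{1}{2}}\right\Vert_{\mathcal{L}_2(H)}\le K\left\Vert (-A_h(0))^{\frac{\beta-1}{2}}P_hQ^{\frac{1}{2}}\right\Vert_{\mathcal{L}_2(H)},\quad t\in[0,T],\ h>0.
\end{eqnarray*}
Thus it suffices to bound the right-hand side uniformly in $h$.

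Next I would handle $\left\Vert (-A_h(0))^{(\beta-1)/2}P_hQ^{1/2}\right\Vert_{\mathcal{L}_2(H)}$ by inserting $(-A(0))^{-(\beta-1)/2}(-A(0))^{(\beta-1)/2}$ and writing
\begin{eqnarray*}
(-A_h(0))^{\frac{\beta-1}{2}}P_hQ^{\frac12}=\left[(-A_h(0))^{\frac{\beta-1}{2}}P_h(-A(0))^{-\frac{\beta-1}{2}}\right](-A(0))^{\frac{\beta-1}{2}}Q^{\frac12}.
\end{eqnarray*}
The second factor is Hilbert--Schmidt with norm bounded by \assref{assumption4}. For the first, bracketed factor I would invoke the standard smoothing/stability property of the Ritz-type projection in fractional-power norms, namely that $\left\Vert (-A_h(0))^{s}P_h(-A(0))^{-s}\right\Vert_{L(H)}\le C$ uniformly in $h$ for $s\in[-1/2,1/2]$ (so in particular for $s=(\beta-1)/2\in[-1/2,1/2]$ since $0\le\beta\le 2$); this follows from the uniform sectoriality \eqref{sectorial1}, the smoothing estimates \eqref{smooth2}, the spectral-gap free integral representation \eqref{fractional}, and the $H^1$-boundedness of $P_h$ together with a duality/interpolation argument, exactly as in the autonomous finite element literature (e.g.\ the references \cite{Fujita,Larsson2} already cited). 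Combining \eqref{chow1} with these two bounds yields $\left\Vert (-A_h(0))^{(\beta-1)/2}P_hQ^{1/2}\right\Vert_{\mathcal{L}_2(H)}\le C$, and the lemma follows.

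The main obstacle I anticipate is the uniform-in-$h$ bound on the operator $(-A_h(0))^{(\beta-1)/2}P_h(-A(0))^{-(\beta-1)/2}$ for the full range $\beta\in[0,2]$, i.e.\ for exponents $(\beta-1)/2$ running over $[-1/2,1/2]$: the negative-exponent case ($\beta<1$) is a smoothing estimate for $P_h$ that is delicate because $P_h$ is only the $L^2$-projection, not the Ritz projection, so one must first pass through the discrete operator's definition \eqref{discreteoper} and use the coercivity \eqref{ellip2}; the positive-exponent case ($\beta>1$) is an inverse-type estimate. Both are classical but require some care to state cleanly with constants independent of $h$ and of $t$ (the latter via \assref{assumption2}(iii)). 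An alternative, perhaps cleaner route that avoids the worst of this is to bound the Hilbert--Schmidt norm directly in terms of an orthonormal eigenbasis of $Q$ and use $\Vert P_h v\Vert\le\Vert v\Vert$ plus the equivalence $\mathcal{D}((-A_h(0))^{(\beta-1)/2})$ versus the relevant Sobolev space, but I expect the factorization argument above to be the shortest path given the tools already assembled in the paper.
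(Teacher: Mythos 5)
Your proposal is correct and follows essentially the same route as the paper: reduce from $t$ to $0$ via the norm equivalence of \lemref{lemma0}, then commute $P_h$ past the fractional power $(-A(0))^{\frac{\beta-1}{2}}$ so that \assref{assumption4} can be invoked on the remaining factor $(-A(0))^{\frac{\beta-1}{2}}Q^{\frac{1}{2}}$. The only difference is organisational: the paper splits into $\beta\geq 1$, where the stability bound $\Vert (-A_h(0))^{\alpha}P_hv\Vert\leq C\Vert (-A(0))^{\alpha}v\Vert$ for $0\leq\alpha\leq 1/2$ is quoted from \cite[Lemma 1]{Tambue1}, and $\beta\leq 1$, where the full Hilbert--Schmidt bound at $t=0$ is quoted from \cite[Proposition 4.1]{TambueNg}, so the delicate negative-exponent stability of the $L^2$-projection that you flag as the main obstacle is handled there by citation rather than by the interpolation/duality argument you sketch.
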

\begin{proof}
For $0\leq\beta\leq 1$, it follows from \cite[Proposition 4.1]{TambueNg} that
\begin{eqnarray}
\label{br1}
\left\Vert (-A_h(0))^{\frac{\beta-1}{2}}P_hQ^{\frac{1}{2}}\right\Vert_{\mathcal{L}_2(H)}<C.
\end{eqnarray}
Therefore using \eqref{br1} and Lemma \ref{lemma0}  it follows  that
\begin{eqnarray}
\left\Vert (-A_h(t))^{\frac{\beta-1}{2}}P_hQ^{\frac{1}{2}}\right\Vert_{\mathcal{L}_2(H)}<C,\quad t\in[0,T],\quad \beta\in[0,1].
\end{eqnarray}
Let us   recall  the following estimate \cite[Lemma 1]{Tambue1} 
\begin{eqnarray}
\label{permut1}
\Vert (-A_h(0))^{\alpha}P_hv\Vert\leq C\Vert (-A(0))^{\alpha}v\Vert,\quad 0\leq \alpha\leq 1/2,\quad v\in\mathcal{D}((-A(0))^{\alpha}).
\end{eqnarray}
For $1\leq \beta\leq 2$, applying  \eqref{permut1} with $\alpha=\frac{\beta-1}{2}$ and using Assumption \ref{assumption4} yields
\begin{eqnarray}
\label{br5}
\left\Vert (-A_h(0))^{\frac{\beta-1}{2}}P_hQ^{\frac{1}{2}}\right\Vert_{\mathcal{L}_2(H)}&=&\sum_{i=1}^{\infty}\left\Vert (-A_h(0))^{\frac{\beta-1}{2}}P_hQ^{\frac{1}{2}}e_i\right\Vert \nonumber\\
&\leq& C\sum_{i=1}^{\infty}\left\Vert (-A(0))^{\frac{\beta-1}{2}}Q^{\frac{1}{2}}e_i\right\Vert\nonumber\\
&=& C\left\Vert (-A(0))^{\frac{\beta-1}{2}}Q^{\frac{1}{2}}\right\Vert_{\mathcal{L}_2(H)}\leq C,
\end{eqnarray} 
Therefore, it follows from \eqref{br5} by using \eqref{equidiscrete2} that 
\begin{eqnarray}
\left\Vert (-A_h(t))^{\frac{\beta-1}{2}}P_hQ^{\frac{1}{2}}\right\Vert_{\mathcal{L}_2(H)}\leq C,\quad t\in[0,T].
\end{eqnarray}
\end{proof}

The proof of the following lemma can be found in \cite{Tambue2}.
\begin{lemma}
\label{lemma0a} 
Under  \assref{assumption2}, the following estimates hold
\begin{eqnarray}
\label{ref3}
\Vert (A_h(t)-A_h(s))(-A_h(r))^{-1}u^h\Vert&\leq& C\vert t-s\vert\Vert u^h\Vert,\quad r,s,t\in[0,T],\quad u^h\in V_h,\\
\label{ref2}
\Vert (-A_h(r))^{-1}\left(A_h(s)-A_h(t)\right)u^h\Vert&\leq& C\vert s-t\vert\Vert u^h\Vert,\quad r,s,t\in[0,T],\quad u^h\in V_h.
\end{eqnarray}
\end{lemma}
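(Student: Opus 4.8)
The plan is to transfer the continuous-level Lipschitz bounds \eqref{conditionB}--\eqref{conditionBB} to the discrete operators through the bilinear forms $a(t)(\cdot,\cdot)$ that define $A_h(t)$ in \eqref{discreteoper}, combining the Ritz projection with a standard inverse inequality on $V_h$. First I would reduce both inequalities to the case $r=0$: writing
\[
(A_h(t)-A_h(s))(-A_h(r))^{-1}=\big[(A_h(t)-A_h(s))(-A_h(0))^{-1}\big]\big[(-A_h(0))(-A_h(r))^{-1}\big],
\]
\[
(-A_h(r))^{-1}(A_h(s)-A_h(t))=\big[(-A_h(r))^{-1}(-A_h(0))\big]\big[(-A_h(0))^{-1}(A_h(s)-A_h(t))\big],
\]
and invoking \lemref{lemma0} with $\gamma=1$, the outer factors $(-A_h(0))(-A_h(r))^{-1}$ and $(-A_h(r))^{-1}(-A_h(0))$ are bounded on $V_h$ uniformly in $h$ and $r$, so it suffices to prove the two estimates with $A_h(r)$ replaced by $A_h(0)$.

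For the first one I would set $v^h:=(-A_h(0))^{-1}u^h\in V_h$ and $v:=(-A(0))^{-1}u^h\in\mathcal D(A(0))$. From \eqref{discreteoper} and the definition of $A(0)$ one checks $a(0)(v^h-v,\chi)=0$ for all $\chi\in V_h$, so $v^h$ is the Ritz projection of $v$ for the form $a(0)$, whence $\|v-v^h\|_1\le Ch\|v\|_{H^2}\le Ch\|(-A(0))v\|=Ch\|u^h\|$. Then for $\chi\in V_h$,
\[
\langle(A_h(t)-A_h(s))v^h,\chi\rangle=(a(s)-a(t))(v^h-v,\chi)+(a(s)-a(t))(v,\chi);
\]
since $v\in\mathcal D(A(0))$ the second term equals $\langle(A(t)-A(s))(-A(0))^{-1}u^h,\chi\rangle$, bounded by $C|t-s|\|u^h\|\|\chi\|$ thanks to \eqref{conditionB}, while for the first term the boundedness and Lipschitz-in-$t$ behaviour of the coefficients $q_{ij},q_i$ gives $|(a(s)-a(t))(w,\chi)|\le C|t-s|\|w\|_1\|\chi\|_1$, which with the inverse inequality $\|\chi\|_1\le Ch^{-1}\|\chi\|$ and the bound on $\|v-v^h\|_1$ is again $\le C|t-s|\|u^h\|\|\chi\|$. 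As $(A_h(t)-A_h(s))v^h\in V_h$, taking the supremum over $\chi\in V_h$ with $\|\chi\|=1$ yields the claim.

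The second estimate I would handle by duality. With $g^h:=(-A_h(0))^{-1}(A_h(s)-A_h(t))u^h\in V_h$ and $\chi\in V_h$, introduce $\psi^h:=(-A_h(0)^{*})^{-1}\chi$ and $\psi:=(-A(0)^{*})^{-1}\chi$, where the adjoints are the operators associated with $a^{*}(0)(u,v):=a(0)(v,u)$; again $\psi^h$ is a Ritz projection, so $\|\psi-\psi^h\|_1\le Ch\|\psi\|_{H^2}\le Ch\|\chi\|$ and $\|\psi\|_{H^2}\le C\|\chi\|$ by elliptic regularity for the adjoint problem. Using \eqref{discreteoper} one then verifies
\[
\langle g^h,\chi\rangle=(a(t)-a(s))(u^h,\psi^h)=(a(t)-a(s))(u^h,\psi^h-\psi)+(a(t)-a(s))(u^h,\psi),
\]
and the term with $\psi^h-\psi$ is treated exactly as above. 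For $(a(t)-a(s))(u^h,\psi)$ one cannot use $\|u^h\|_1$, so I would integrate by parts to move the spatial derivatives off the merely piecewise-linear factor $u^h$ onto the regular factor $\psi\in H^2(\Lambda)$; the boundary contributions vanish for Dirichlet conditions since $u^h\in H^1_0(\Lambda)$, and for Robin/Neumann conditions they are absorbed through the boundary conditions on $\psi$ and the $t$-independence of the boundary term of $a(t)$. The smoothness of $q_{ij},q_i$ in $x$, their Lipschitz continuity in $t$, and $\|\psi\|_{H^2}\le C\|\chi\|$ then give $|(a(t)-a(s))(u^h,\psi)|\le C|t-s|\|u^h\|\|\chi\|$, and a final supremum over $\chi$ concludes.

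The hard part will be that last step: because $u^h$ lacks $H^2$-regularity, the form difference $(a(t)-a(s))(u^h,\psi)$ must be rewritten by integration by parts against $\psi$, which is what forces the $C^1$-in-$x$ smoothness of the coefficients with $x$-derivatives Lipschitz in $t$, and a careful treatment of the boundary terms in the non-Dirichlet cases; correctly setting up the dual problem via the adjoint operators $A(0)^{*}$, $A_h(0)^{*}$ is the other delicate point.
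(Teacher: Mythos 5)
The paper gives no proof of this lemma — it simply defers to \cite{Tambue2} — so there is no internal argument to compare against. Judged on its own, your proof follows the standard (and, as far as one can tell, the intended) route for transferring the continuous Lipschitz bounds \eqref{conditionB}--\eqref{conditionBB} to the discrete operators: reduce to $r=0$ via \lemref{lemma0} with $\gamma=1$; recognise $(-A_h(0))^{-1}u^h$ as the Ritz projection of $(-A(0))^{-1}u^h$ and trade the $O(h)$ energy-norm projection error against the inverse inequality $\Vert\chi\Vert_1\leq Ch^{-1}\Vert\chi\Vert$; and handle \eqref{ref2} by duality with the adjoint Ritz projection. This is sound and I consider the proposal essentially correct. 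Two places need to be made honest rather than just plausible. First, the form-level estimate $\vert(a(t)-a(s))(w,\chi)\vert\leq C\vert t-s\vert\Vert w\Vert_1\Vert\chi\Vert_1$, and the $C^1$-in-$x$ bound on $q_{ij}(\cdot,t)-q_{ij}(\cdot,s)$ used after integrating by parts, require the coefficients and their spatial derivatives to be Lipschitz in $t$; the paper only states H\"{o}lder continuity in $t$ plus the operator-level conditions, so you are using the (reasonable, but unstated) coefficient-level version of \assref{assumption2}(ii). Second, for Robin/Neumann conditions the boundary term $\int_{\partial\Lambda}u^h\,(q_{ij}(\cdot,t)-q_{ij}(\cdot,s))\,\partial_j\psi\,n_i\,d\sigma$ produced by your integration by parts cannot be estimated by trace plus inverse inequalities (that costs an uncompensated $h^{-1/2}$); it disappears only because the constant-domain hypothesis $\mathcal{D}(A(t))=D$ forces the conormal boundary data to be time-independent (as in the paper's example $A(t)=D(t)\varDelta$, where $(q_{ij}(\cdot,t)-q_{ij}(\cdot,s))\partial_j\psi\,n_i$ is a multiple of $\partial\psi/\partial n=0$), so you should invoke that mechanism explicitly instead of saying the term is ``absorbed''. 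The remaining ingredients ($H^2$ elliptic regularity for $A(0)$ and its adjoint, quasi-uniformity for the inverse inequality) are standard hypotheses the paper takes for granted.
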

\begin{remark}
\label{evolutionremark}
From  \lemref{lemma0a} and using the fact that $\mathcal{D}(A_h(t))=\mathcal{D}(A_h(0))$, it follows from  \cite[Theorem 6.1, Chapter 5]{Pazy} 
that there exists a unique evolution system $U_h :\Delta(T)\longrightarrow L(H)$, satisfying  \cite[(6.3), Page 149]{Pazy}.
\end{remark}

\begin{lemma}
\label{evolutionlemma}
Let  \assref{assumption2} be fulfilled. 
\begin{itemize}
\item[(i)] The following estimate holds
\begin{eqnarray}
\label{reste2}
 \Vert U_h(t,s)\Vert_{L(H)}\leq C,\quad 0\leq s\leq t\leq T.
\end{eqnarray}
\item[(ii)] For any $0\leq\alpha\leq 1$, $0\leq\gamma\leq 1$ and $0\leq s\leq t\leq T$, the following estimates hold
\begin{eqnarray}
\label{ae1}
\Vert (-A_h(r))^{\alpha}U_h(t,s)\Vert_{L(H)}&\leq& C(t-s)^{-\alpha},\quad r\in[0,T],\\
\label{ae3}
\Vert U_h(t,s)(-A_h(r))^{\alpha}\Vert_{L(H)}&\leq& C(t-s)^{-\alpha},\quad r\in[0,T],\\
\label{ae2}
 \Vert (-A_h(r))^{\alpha}U_h(t,s)(-A_h(s))^{-\gamma}\Vert_{L(H)}&\leq& C(t-s)^{\gamma-\alpha}, \quad r\in[0,T].
\end{eqnarray}
\item[(iii)] For any $0\leq s\leq t\leq T$,  the following useful estimate holds 
\begin{eqnarray}
\label{hen1}
\Vert \left(U_h(t,s)-\mathbf{I}\right)(-A_h(s))^{-\gamma}\Vert_{L (H)}&\leq& C(t-s)^{\gamma}, \quad 0\leq \gamma\leq 1,\\
\label{hen2}
\Vert \left (-A_h(r))^{-\gamma}(U_h(t,s)-\mathbf{I}\right)\Vert_{L (H)}&\leq& C(t-s)^{\gamma}, \quad 0\leq \gamma\leq 1.
\end{eqnarray}
\end{itemize}
\end{lemma}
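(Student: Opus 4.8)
The estimates \eqref{reste2}–\eqref{hen2} are the discrete analogues of \propref{proposition1} together with the standard smoothing estimates, so the natural strategy is to lift the continuous-level facts to the discrete level using three ingredients that are already available: the uniform sectoriality \eqref{sectorial1}, the uniform smoothing bounds \eqref{smooth2}–\eqref{smooth1}, and the Lipschitz-in-time bounds of \lemref{lemma0a}, together with the norm-equivalences of \lemref{lemma0}. Since $\mathcal{D}(A_h(t))=\mathcal{D}(A_h(0))=V_h$ and \lemref{lemma0a} supplies \eqref{conditionB}–\eqref{conditionBB} at the discrete level uniformly in $h$, \cite[Theorem 6.1, Chapter 5]{Pazy} applies verbatim and yields the evolution system $U_h$ with all the constants in \cite[(6.3), Page 149]{Pazy} depending only on the sectoriality angle/constant and the Lipschitz constant, hence \emph{uniform in $h$} — this is the whole point of having phrased \lemsref{lemma0}{lemma0a} with $h$-independent constants.

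\textbf{Step (i).} The bound \eqref{reste2} is immediate from \propref{proposition1}(i) applied to the family $A_h(t)$: one only checks that $K_0$ in that proposition depends solely on the constants entering \assref{assumption2}, which by \eqref{sectorial1} and \lemref{lemma0a} are $h$-uniform.

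\textbf{Step (ii).} For \eqref{ae1} I would use the representation of $U_h(t,s)$ as a perturbation of the frozen semigroup, $U_h(t,s)=e^{(t-s)A_h(s)}+\int_s^t e^{(t-\tau)A_h(\tau)}R_h(\tau,s)\,d\tau$, where the remainder $R_h$ is the one constructed in Pazy's proof and satisfies $\Vert R_h(\tau,s)\Vert_{L(H)}\le C(\tau-s)^{-1+\delta}$ for some $\delta>0$, again $h$-uniformly thanks to \lemref{lemma0a}. Multiplying by $(-A_h(r))^{\alpha}$ and using \eqref{smooth2} together with \eqref{equidiscrete2} to swap $(-A_h(r))^{\alpha}$ for $(-A_h(s))^{\alpha}$ (resp. $(-A_h(\tau))^{\alpha}$), the leading term gives $C(t-s)^{-\alpha}$ and the integral term is of lower order by a Beta-function computation. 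The bound \eqref{ae3} is the dual/mirror statement and follows the same way from \propref{proposition1}(iii) (using $\partial_s U_h(t,s)=U_h(t,s)A_h(s)$) combined with \eqref{smooth2} and \lemref{lemma0}. For \eqref{ae2} one interpolates: write $(-A_h(r))^{\alpha}U_h(t,s)(-A_h(s))^{-\gamma}$ and insert $U_h(t,s)=U_h(t,(t+s)/2)U_h((t+s)/2,s)$; apply \eqref{ae1} to the first factor with exponent $\alpha$ and to the second factor the identity $U_h((t+s)/2,s)(-A_h(s))^{-\gamma}=(-A_h(s))^{-\gamma}+\bigl(U_h((t+s)/2,s)-\mathbf I\bigr)(-A_h(s))^{-\gamma}$, the first piece bounded by $C(t-s)^{-\gamma\cdot 0}$... more cleanly: bound $\Vert U_h((t+s)/2,s)(-A_h(s))^{-\gamma}\Vert_{L(H)}\le C(t-s)^{\gamma}$ directly via \eqref{hen1} (proved below) plus \eqref{reste2}, giving the product bound $C(t-s)^{\gamma-\alpha}$.

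\textbf{Step (iii).} For \eqref{hen1}, write $\bigl(U_h(t,s)-\mathbf I\bigr)(-A_h(s))^{-\gamma}=\int_s^t \partial_\tau U_h(\tau,s)\,d\tau\,(-A_h(s))^{-\gamma}=-\int_s^t A_h(\tau)U_h(\tau,s)(-A_h(s))^{-\gamma}\,d\tau$, and estimate the integrand by \eqref{ae2} with $\alpha=1$ to get $C(\tau-s)^{\gamma-1}$, which is integrable since $\gamma>0$; the case $\gamma=0$ is trivial by \eqref{reste2} and the triangle inequality. Similarly \eqref{hen2} follows from $\bigl(-A_h(r))^{-\gamma}(U_h(t,s)-\mathbf I\bigr)=-(-A_h(r))^{-\gamma}\int_s^t U_h(t,\tau)A_h(\tau)\,d\tau$ ... more precisely, using $\partial_\tau U_h(t,\tau)=U_h(t,\tau)A_h(\tau)$ from \propref{proposition1}(iii), write it as $-\int_s^t (-A_h(r))^{-\gamma}U_h(t,\tau)A_h(\tau)\,d\tau$ and bound by \eqref{ae3}-type estimates after moving $(-A_h(r))^{-\gamma}$ through $U_h(t,\tau)$ via \lemref{lemma0}; again the exponent is $\gamma-1$, integrable. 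Alternatively, avoid circularity by proving \eqref{hen1} first purely from the frozen-semigroup perturbation series (using \eqref{smooth1} for the leading term $(e^{(t-s)A_h(s)}-\mathbf I)(-A_h(s))^{-\gamma}$), then use it in \eqref{ae2}.

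\textbf{The main obstacle.} The only genuinely delicate point is making sure that every constant produced along the way is independent of $h$; this is not automatic because the perturbation remainder $R_h$ in Pazy's construction and the Beta-integrals it generates must be controlled using only \eqref{sectorial1}, \eqref{smooth2}–\eqref{smooth1} and \lemref{lemma0a}, none of which degenerate as $h\to 0$ — and one must resist the temptation to use spectral decompositions of $A_h(t)$, whose eigenvalues do depend on $h$. A secondary (purely bookkeeping) subtlety is the order in which \eqref{ae1}, \eqref{ae2}, \eqref{hen1}, \eqref{hen2} are proved so as to avoid a circular dependency; proving \eqref{hen1} directly from the frozen-semigroup series breaks the loop cleanly.
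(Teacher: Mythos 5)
Your overall strategy --- lift Pazy's construction to the discrete level, noting that the only inputs are the uniform sectoriality \eqref{sectorial1}, the smoothing bounds \eqref{smooth2}--\eqref{smooth1} and the $h$-uniform Lipschitz bounds of \lemref{lemma0a}, and then exploit the representation of $U_h$ as a perturbation of the frozen semigroup --- is the right one; it is exactly the integral equation the paper itself invokes in \eqref{sharp1}. Since the paper's own proof is only a citation to \cite{Tambue2}, a line-by-line comparison is not possible, but your treatment of (i), of \eqref{ae1}, \eqref{ae3}, and of \eqref{hen1}--\eqref{hen2} (including the observation that \eqref{hen1} should be obtained directly from the frozen-semigroup series and \eqref{smooth1} to avoid circularity) is sound.

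The argument for \eqref{ae2}, however, contains a genuine error. You split $U_h(t,s)=U_h(t,\frac{t+s}{2})U_h(\frac{t+s}{2},s)$ and claim $\Vert U_h(\frac{t+s}{2},s)(-A_h(s))^{-\gamma}\Vert_{L(H)}\le C(t-s)^{\gamma}$ ``via \eqref{hen1} plus \eqref{reste2}''. This is false: those two estimates only give $\Vert U_h(\tau,s)(-A_h(s))^{-\gamma}\Vert_{L(H)}\le C(\tau-s)^{\gamma}+\Vert(-A_h(s))^{-\gamma}\Vert_{L(H)}\le C$, a constant; no bound vanishing as $t\to s$ can hold, since at $\tau=s$ the operator equals $(-A_h(s))^{-\gamma}$, whose norm is bounded away from zero. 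Your splitting therefore yields only $C(t-s)^{-\alpha}$ and loses the gain of $\gamma$, which is precisely the content of \eqref{ae2} and what is needed downstream (e.g.\ in \eqref{man2} and \eqref{man5}). Note also that \eqref{ae2} can only hold in the range $\gamma\le\alpha$ (take $\alpha=0$ and $\gamma>0$ to see it fail otherwise). The standard repair is to first establish the endpoint case $\gamma=\alpha$, namely $\Vert(-A_h(r))^{\alpha}U_h(t,s)(-A_h(s))^{-\alpha}\Vert_{L(H)}\le C$, from the discrete analogue of \propref{proposition1}(iii) ($\Vert A_h(t)U_h(t,s)A_h(s)^{-1}\Vert_{L(H)}\le C$, obtained $h$-uniformly from the same perturbation series and \lemref{lemma0a}) together with the moment inequality for fractional powers and \lemref{lemma0}, and then interpolate in $\gamma$ between this and the case $\gamma=0$ supplied by \eqref{ae1}. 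With that replacement the remainder of your scheme goes through.
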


\begin{proof}
The proof can be found in \cite{Tambue2}.
\end{proof}

\begin{remark}
For relatively smooth coefficients ($q_j\in C^1(\Lambda)$), the formal adjoint of $A(t)$ denoted by $A^*(t)$ is given by \cite[Section 6.2.3]{Evans}
 \begin{eqnarray}
 \label{adjoint1}
 A^*(t)=\sum_{i,j=1}^d\frac{\partial}{\partial x_j}\left(q_{ij}(x,t)\frac{\partial}{\partial x_i}\right)+\sum_{j=1}^dq_j(x,t)\frac{\partial}{\partial x_j}+\left(\sum_{j=1}^d\frac{\partial q_j}{\partial x_j}(x,t)\right)\mathbf{I},\quad t\in[0, T].
 \end{eqnarray}
 Therefore the self-adjoint part of $A(t)$ is given by 
 \begin{eqnarray}
 A^s(t)=\sum_{i,j=1}^d\frac{\partial}{\partial x_j}\left(q_{ij}(x,t)\frac{\partial}{\partial x_i}\right),\quad t\in[0,T].
 \end{eqnarray}
 The bilinear operator associated to  $A^s(t)$ is given by
 \begin{eqnarray}
 a^s(t)(u,v)=\sum_{i, j=1}^d\int_{\Lambda}q_{ij}(x,t)\frac{\partial u}{\partial x_i}\frac{\partial v}{\partial x_j}dx,\quad u,v \in V,\quad t\in[0,T].
 \end{eqnarray}
 The discrete version $A_h^s(t)$  of $A^s(t)$ is therefore given by $A^s_h(t): V_h\longrightarrow V_h$ such that 
 \begin{eqnarray}
 \left\langle A_h^s(t)\phi, \chi\right\rangle_H=\left\langle A^s(t)\phi, \chi\right\rangle_H=-a^s(t)(\phi, \chi),\quad \phi, \chi\in V_h.
 \end{eqnarray}
 Hence $A^s_h(t)$  satisfies also Assumption \ref{assumption5} and
 \begin{eqnarray}
 A_h(t)=A^s_h(t)+A^{ns}_h(t),
 \end{eqnarray}
 where $A^{ns}_h(t)$ is the non self adjoint part of $A_h(t)$.
\end{remark}
The following sharp integral estimate will be useful in our convergence analysis to avoid suboptimal convergence order and is a key ingredient 
to achieve optimal convergence order in time. It is an analogue of \cite[Lemma 3.2 (iii)]{KruseLarsson}  for evolution system.
\begin{lemma}
\label{sharp}
Let  \asssref{assumption2} and \eqref{assumption5} be fulfilled and let $0\leq \rho\leq 1$. Then the following estimate holds
\begin{eqnarray}
\label{sharpa}
\int_{\tau_1}^{\tau_2}\left\Vert (-A_h(\tau))^{\rho/2}S^h_r(\tau_2-r)\right\Vert^2_{L(H)}dr&\leq& C(\tau_2-\tau_1)^{1-\rho},\quad 0\leq\tau_1\leq \tau_2\leq T,\quad \tau\in[0,T],\\
\label{sharpb}
\int_{\tau_1}^{\tau_2}\left\Vert (-A_h(\tau))^{\rho/2}U_h(\tau_2, r)\right\Vert^2_{L(H)}dr&\leq& C(\tau_2-\tau_1)^{1-\rho},\quad 0\leq \tau_1\leq \tau_2\leq T,\quad \tau\in[0,T],\\
\label{sharpc}
\int_{\tau_1}^{\tau_2}\left\Vert U_h(\tau_2, r)(-A_h(\tau))^{\rho/2}\right\Vert^2_{L(H)}dr&\leq& C(\tau_2-\tau_1)^{1-\rho},\quad 0\leq \tau_1\leq \tau_2\leq T,\quad \tau\in[0,T].
\end{eqnarray}
\end{lemma}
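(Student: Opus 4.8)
The plan is to establish \eqref{sharpa} first and then to deduce \eqref{sharpb} and \eqref{sharpc} from it by a variation-of-constants (Duhamel) argument for the evolution system $U_h$.

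For \eqref{sharpa} and the generic range $0\le\rho<1$ I would argue directly. By the analytic smoothing bound \eqref{smooth2} with exponent $\rho/2$ together with the equivalence of norms of \lemref{lemma0} (used twice, passing through $A_h(0)$, to exchange the frozen time $r$ against $\tau$), one has, uniformly in $h>0$ and $\tau,r\in[0,T]$,
\[
\left\Vert (-A_h(\tau))^{\rho/2}S^h_r(\tau_2-r)\right\Vert_{L(H)}\leq C\left\Vert (-A_h(r))^{\rho/2}e^{(\tau_2-r)A_h(r)}\right\Vert_{L(H)}\leq C(\tau_2-r)^{-\rho/2},
\]
whence $\int_{\tau_1}^{\tau_2}\Vert(-A_h(\tau))^{\rho/2}S^h_r(\tau_2-r)\Vert^2_{L(H)}\,dr\leq C\int_{\tau_1}^{\tau_2}(\tau_2-r)^{-\rho}\,dr=\frac{C}{1-\rho}(\tau_2-\tau_1)^{1-\rho}$. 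The borderline exponent $\rho=1$ is the delicate case and the one for which \assref{assumption5} is designed: I would split $A_h(r)=A^s_h(r)+A^{ns}_h(r)$, note that the first-order part $A^{ns}_h(r)$ is $(-A^s_h(r))^{1/2}$-bounded uniformly in $h$ and $r$, and expand $e^{(\cdot)A_h(r)}$ by Duhamel around $e^{(\cdot)A^s_h(r)}$, which (after an elementary iteration absorbing the order-$\frac{1}{2}$ perturbation) reduces the matter to the self-adjoint semigroup. For $e^{(\cdot)A^s_h(r)}$ one uses the spectral decomposition $-A^s_h(r)=\sum_n\lambda_{n,h}(r)\langle\cdot,e_{n,h}(r)\rangle e_{n,h}(r)$, invokes the time-uniform bounds \eqref{supcond} to pass from $(-A^s_h(\tau))^{1/2}$ to $(-A^s_h(r))^{1/2}$ mode by mode, and then the exact identity $\int_0^{\ell}\mu\,e^{-2\mu s}\,ds=\frac{1}{2}(1-e^{-2\mu\ell})\le\frac{1}{2}$ gives the bound that is uniform in $h$; this is the evolution-system counterpart of \cite[Lemma 3.2 (iii)]{KruseLarsson}.

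To pass from the frozen semigroup to $U_h$ in \eqref{sharpb} I would use the variation-of-constants identity for the evolution system (valid in $L(H)$ by \cite[Chapter 5]{Pazy}, see Remark~\ref{evolutionremark})
\[
U_h(\tau_2,r)=S^h_r(\tau_2-r)+\int_r^{\tau_2}U_h(\tau_2,\sigma)(A_h(\sigma)-A_h(r))S^h_r(\sigma-r)\,d\sigma.
\]
Factoring $A_h(\sigma)-A_h(r)=(A_h(\sigma)-A_h(r))(-A_h(r))^{-1}(-A_h(r))$ and combining \eqref{ref3} of \lemref{lemma0a} with \eqref{smooth2} (exponent $1$) gives $\Vert(A_h(\sigma)-A_h(r))S^h_r(\sigma-r)\Vert_{L(H)}\le C$; together with $\Vert(-A_h(\tau))^{\rho/2}U_h(\tau_2,\sigma)\Vert_{L(H)}\le C(\tau_2-\sigma)^{-\rho/2}$ (from \eqref{ae1} of \lemref{evolutionlemma} and \lemref{lemma0}) this yields
\[
\left\Vert (-A_h(\tau))^{\rho/2}U_h(\tau_2,r)\right\Vert_{L(H)}\le\left\Vert (-A_h(\tau))^{\rho/2}S^h_r(\tau_2-r)\right\Vert_{L(H)}+C(\tau_2-r)^{1-\rho/2}.
\]
Squaring, integrating over $r\in[\tau_1,\tau_2]$, bounding the first term by \eqref{sharpa}, and using $\int_{\tau_1}^{\tau_2}(\tau_2-r)^{2-\rho}\,dr\le T^2(\tau_2-\tau_1)^{1-\rho}$ proves \eqref{sharpb}. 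Estimate \eqref{sharpc} follows the same way from the dual identity $U_h(\tau_2,r)=S^h_{\tau_2}(\tau_2-r)+\int_r^{\tau_2}S^h_{\tau_2}(\tau_2-\sigma)(A_h(\sigma)-A_h(\tau_2))U_h(\sigma,r)\,d\sigma$, now right-multiplied by $(-A_h(\tau))^{\rho/2}$, using \eqref{ref2} of \lemref{lemma0a}, \eqref{ae3} of \lemref{evolutionlemma} and \lemref{lemma0} for the integral term, and, for the leading term, the right-sided analogue of \eqref{sharpa} (proved identically, since $(-A_h(\tau_2))^{\rho/2}$ commutes with $S^h_{\tau_2}(\cdot)$).

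The only genuinely non-routine step is the borderline exponent $\rho=1$ in \eqref{sharpa}: there the smoothing bound is too crude and one must exploit the spectral information granted by \assref{assumption5}, treating the non-self-adjoint part as a relatively bounded perturbation of order less than one. All remaining steps are standard combinations of \eqref{smooth2}, the Lipschitz-in-time bounds of \lemref{lemma0a}, the evolution-system bounds of \lemref{evolutionlemma} and the norm equivalences of \lemref{lemma0}.
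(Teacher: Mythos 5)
Your proposal is correct and follows the same overall architecture as the paper's proof: reduce \eqref{sharpa} to the frozen \emph{self-adjoint} semigroup and exploit \assref{assumption5} through the spectral expansion in the eigenbasis of $-A^s_h(r)$ (with the exact computation $\int\lambda^{\rho}e^{-2\lambda(\tau_2-r)}dr$ and the boundedness of $x\mapsto x^{\rho-1}(1-e^{-x})$), then transfer to $U_h$ via the parabolic integral equation. The two places where you genuinely deviate are worth recording. First, to strip off the non-self-adjoint part you use a Duhamel expansion of $e^{tA_h(r)}$ around $e^{tA^s_h(r)}$, treating $A^{ns}_h(r)$ as an $(-A^s_h(r))^{1/2}$-bounded perturbation (uniformly in $h$, by the variational bound $\langle A^{ns}_h\phi,\chi\rangle\le C\Vert\phi\Vert_{H^1}\Vert\chi\Vert$); the paper instead factors $S^h_r(t)=e^{tA^s_h(r)}T^h_r(t)$ via the Zassenhaus/Baker--Campbell--Hausdorff product and asserts uniform boundedness of $T^h_r(t)$. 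Your route is arguably the more robust of the two, since a single Duhamel step with $\int_0^t(t-s)^{-1/2}s^{-1/2}ds=\pi$ already closes the estimate, whereas the uniform-in-$h$ control of the infinite Zassenhaus product is delicate. Second, for \eqref{sharpb}--\eqref{sharpc} you use the one-shot variation-of-constants identities with $U_h$ inside the integral together with \eqref{ref3}, \eqref{ref2} and \eqref{ae1}, \eqref{ae3}, rather than the Pazy series $R^h=\sum_n R^h_n$ used in the paper; both give the remainder $O((\tau_2-\tau_1)^{3-\rho})$, which is dominated by the leading $(\tau_2-\tau_1)^{1-\rho}$ term. The only step you leave compressed is the $\rho=1$ perturbation argument ("elementary iteration"), but as noted no iteration is actually needed, so this is a presentational rather than a mathematical gap.
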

\begin{proof}
We start with the estimate of \eqref{sharpa}. 
Let us recall that $A_h(r)=A^s_h(r)+A^{ns}_h(r)$, where $A^s_h(r)$ and $A^{ns}_h(r)$ are respectively the self adjoint and the non self adjoint parts of $A_h(r)$.
As in \cite{TambueNg}, we use the Zassenhaus formula \cite{Scholz,Magnus} to decompose the semigroup $S^h_r(t)$ as follows
\begin{eqnarray}
\label{sharp6}
S^h_r(t)=e^{A_h(r)t}=e^{\left(A_h^s(r)+A^{ns}_h(r)\right)t}=e^{A_h^s(r)t}e^{A^{ns}_h(r)t}
\prod_{k=2}^{\infty}e^{C^h_k(r,t)}, 
\end{eqnarray}
where the $C^h_k(r,t)$ are called Zassenhaus exponents \cite{Scholz}. Let us set
\begin{eqnarray}
\label{sharp7}
T^h_r(t):=e^{A^{ns}_h(r)t}\prod_{k=2}^{\infty}e^{C^h_k(r,t)}.
\end{eqnarray}
Therefore
\begin{eqnarray}
\label{sharp8}
S^h_r(t)=S^h_s(r,t)T^h_r(t),
\end{eqnarray}
where $S^h_s(r,t):=e^{A_h^s(r)t}$ is the semigroup generated by $A^s_h(r)$. Using the Baker-Campbell-Hausdorff representation \cite{Scholz,Mielnik,Casas},
it is well known that for non-commuting quantities $x$ and $y$, we have 
\begin{eqnarray}
\label{sharp9}
e^xe^y=e^{x\oplus y},
\end{eqnarray}
where the exponent $x\oplus y$ is given by an infinite Baker-Campbell-Hausdorff series of multiple commutators with rational coefficients
(see e.g.,  \cite[(1.1)]{Mielnik}  or  \cite[(1)-(2)]{Scholz}) and converges to $\log(e^xe^y)$. Using \eqref{sharp9}, by recurrence,  there exits $Z=Z(t, A^s_h(r), A^{ns}_h(r))$ such that the operator $T^h_r(t)$ can be written as
\begin{eqnarray}
T^h_r(t)=e^Z.
\end{eqnarray}
Therefore, $T^h_r(t)$ is uniformly bounded. Note that $\mathcal{D}\left(-A_h(r))\right)=\mathcal{D}\left(-A_h^s(r)\right)$, with the equivalence of norms, see e.g., \cite{Fujita,Larsson1}. So by  \cite[(3.3)]{Lions} and using Assumption \ref{assumption2} and Lemma \ref{lemma0}, we have $\mathcal{D}\left((-A_h(r))^{\alpha})\right)=\mathcal{D}\left((-A_h^s(r))^{\alpha}\right)$, $0\leq \alpha\leq1$, with the equivalence of norms.  Therefore, using \eqref{sharp9} and the boundedness of $T^h_r(t)$ yields
\begin{eqnarray}
\label{sharp10}
\int_{\tau_1}^{\tau_2}\left\Vert (-A_h(\tau))^{\rho/2}S^h_r(\tau_2-r)\right\Vert^2_{L(H)}dr&\leq& C\int_{\tau_1}^{\tau_2}\left\Vert (-A_h(r))^{\rho/2}S^h_r(\tau_2-r)\right\Vert^2_{L(H)}dr\nonumber\\
&\leq& C\int_{\tau_1}^{\tau_2}\left\Vert (-A_h(r))^{\rho/2}S^h_s(r, \tau_2-r)T^h_r(\tau_2-r)\right\Vert^2_{L(H)}dr\nonumber\\
&=& C\int_{\tau_1}^{\tau_2}\left\Vert (-A_h(r))^{\rho/2}S^h_s(r, \tau_2-r)P_hT^h_r(\tau_2-r)\right\Vert^2_{L(H)}dr\nonumber\\
&\leq & C\int_{\tau_1}^{\tau_2}\left\Vert (-A_h(r))^{\rho/2}S^h_s(r, \tau_2-r)P_h\right\Vert^2_{L(H)}dr\nonumber\\
&\leq& C\int_{\tau_1}^{\tau_2}\left\Vert(-A_h^s(r))^{\rho/2}S^h_s(r, \tau_2-r)P_h\right\Vert^2_{L(H)}dr.
\end{eqnarray}
From  \assref{assumption5}, it follows that there exists an increasing sequence of real numbers $(\lambda_n^h(r))_{n= 1}^{N_h}$ 
and eigenfunctions $(e^h_n(r))_{n=1}^{N_h}$ in $H$ such that $A_h(r)e^h_n(r)=\lambda^h_n(r)e^h_n(r)$ and
\begin{eqnarray}
0<\lambda^h_1(r)\leq \lambda_2^h(r)\leq\cdots\leq \lambda^h_{N_h}(r), 
\end{eqnarray}
where $dim(V_h)=N_h$.
From  the coercivity \eqref{ellip2}, there exists $C>0$ such that
\begin{eqnarray}
\inf_{0\leq t\leq T}\lambda^h_n(t) < C.
\end{eqnarray}
However as $e^h_n(t) $ tends to $e_n(t)$ when $h \rightarrow 0$, from \eqref{supcond} we also have
\begin{eqnarray}
\sup_{0\leq t\leq T}\lambda^h_n(t)<C(n) ,\quad \text{and}\quad \sup_{0\leq t\leq T}(x, e^h_n(t))<C_1(x,n),\quad n=1,\cdots,N_h,\quad x\in H.
\end{eqnarray}
Like in the proof of  \cite[Lemma 3.2 (iii)]{KruseLarsson}, using the expansion of $P_hx$ (with $x\in H$),  in terms of the eigenbasis $(e^h_n(r))_{n= 1}^{N_h}$ of the operator $A_h(r)$ and careful estimates yields
\begin{eqnarray}
\label{sharp11}
&&\int_{\tau_1}^{\tau_2}\left\Vert\left(-A_h^s(r)\right)^{\rho/2}S^h_s(r, \tau_2-r)P_hx\right\Vert^2dr\nonumber\\
&=&\int_{\tau_1}^{\tau_2}\left\Vert\sum_{n=1}^{N_h}(-A^s_h(r))^{\rho/2}S^h_s(r, \tau_2-r)\left(x, e^h_n(r)\right)e^h_n(r)\right\Vert^2dr\nonumber\\
&=&\sum_{n=1}^{N_h}\int_{\tau_1}^{\tau_2}\left(x, e^h_n(r)\right)^2\left(\lambda_n^h(r)\right)^{\rho}e^{-2\lambda^h_n(r)(\tau_2-r)}dr\nonumber\\
&\leq&\sum_{n=1}^{N_h}\int_{\tau_1}^{\tau_2}\sup_{0\leq r\leq T}\left(x, e^h_n(r)\right)^2\sup_{0\leq r\leq T}\left(\lambda^h_n(r)\right)^{\rho}e^{-2(\tau_2-r)\inf\limits_{0\leq r\leq T}\{\lambda^h_n(r)\}}dr\nonumber\\
&=&\sum_{n=1}^{N_h}\sup_{0\leq t\leq T}\left(x, e^h_n(t)\right)^2\sup_{0\leq t\leq T}\left(\lambda^h_n(t)\right)^{\rho}\int_{\tau_1}^{\tau_2}e^{-2(\tau_2-r)\inf\limits_{0\leq t\leq T}\{\lambda^h_n(t)\}}dr\nonumber\\
&=&\frac{1}{2}\sum_{n=1}^{N_h}\sup_{0\leq t\leq T}\left(x, e^h_n(t)\right)^2\sup_{0\leq t\leq T}\left(\lambda^h_n(t)\right)^{\rho}\left(\inf_{0\leq t\leq T}\lambda^h_n(t)\right)^{-1}\left(1-e^{-2\inf\limits_{0\leq t\leq T}\lambda_n^h(t)(\tau_2-\tau_1)}\right)\nonumber\\
&=&\frac{1}{2}\sum_{n=1}^{N_h}\sup_{0\leq t\leq T}\left(x, e^h_n(t)\right)^2\sup_{0\leq t\leq T}\left(\lambda^h_n(t)\right)^{\rho}\sup_{0\leq t\leq T}\left(\frac{1}{\lambda^h_n(t)}\right)\left(1-e^{-2\inf\limits_{0\leq t\leq T}\lambda_n^h(t)(\tau_2-\tau_1)}\right)\nonumber\\
&=&\frac{1}{2}\sum_{n=1}^{N_h}\sup_{0\leq t\leq T}\left(x, e^h_n(t)\right)^2\sup_{0\leq t\leq T}\left[\left(\lambda^h_n(t)\right)^{\rho}\left(\lambda^h_n(t)\right)^{-1}\right]\left(1-e^{-2\inf\limits_{0\leq t\leq T}\lambda_n^h(t)(\tau_2-\tau_1)}\right)\nonumber\\
&=&\frac{1}{2}\sum_{n=1}^{N_h}\sup_{0\leq t\leq T}\left(x, e^h_n(t)\right)^2\sup_{0\leq t\leq T}\left[\left(\lambda^h_n(t)\right)^{\rho-1}\right]\left(1-e^{-2\inf\limits_{0\leq t\leq T}\lambda_n^h(t)(\tau_2-\tau_1)}\right)\nonumber\\
&=&\frac{1}{2}\sum_{n=1}^{N_h}\sup_{0\leq t\leq T}\left(x, e^h_n(t)\right)^2\left(\inf_{0\leq t\leq T}\lambda^h_n(t)\right)^{\rho-1}\left(1-e^{-2\inf\limits_{0\leq t\leq T}\lambda_n^h(t)(\tau_2-\tau_1)}\right)
\end{eqnarray}
Using the boundedness of the function $x\longmapsto x^{\rho-1}(1-e^{-x})$ for $x\in[0, \infty)$, $\rho\in[0,1]$ and  the Parseval's identity, it follows from \eqref{sharp11} that 
\begin{eqnarray}
\label{sharp12}
\int_{\tau_1}^{\tau_2}\left\Vert(-A_h^s(r))^{\rho/2}S^h_s(r, \tau_2-r)P_hx\right\Vert^2&\leq& C(\tau_2-\tau_1)^{1-\rho}\sum_{n=1}^{N_h}\sup_{0\leq t\leq T}\left(x, e^h_n(t)\right)^2\nonumber\\
&=&C(\tau_2-\tau_1)^{1-\rho}\sup_{0\leq t\leq T}\sum_{n=1}^{N_h}\left(x, e^h_n(t)\right)^2\nonumber\\
&\leq& C(\tau_2-\tau_1)^{1-\rho}\sup_{0\leq t\leq T}\Vert x\Vert^2=C(\tau_2-\tau_1)^{1-\rho}\Vert x\Vert^2.
\end{eqnarray}
Substituting \eqref{sharp12} in \eqref{sharp11} completes the proof of \eqref{sharpa}. Let us now prove \eqref{sharpb}. Note that for $0\leq \rho<1$ the estimate \eqref{sharpb} follows from  \lemref{evolutionlemma}. The crucial case  is when $\rho=1$.
Note that the evolution parameter $U_h(\tau_2,r)$ satisfies the following integral equation, see e.g., \cite[Chapter 5]{Pazy}.
\begin{eqnarray}
\label{sharp1}
U_h(\tau_2,r)=S^h_r(\tau_2-r)+\int_r^{\tau_2}S^h_{\sigma}(\tau_2-\sigma)R^h(\sigma, r)d\sigma, 
\end{eqnarray}
where $R^h(\sigma, r)$ is defined as follows, see \cite[Chapter 5]{Pazy} 
\begin{eqnarray}
\label{sharp2}
R^h(\sigma, r)=\sum_{n=1}^{\infty}R^h_n(\sigma, r),
\end{eqnarray}
where $R^h_n(\sigma, r)$ satisfies the following recurrence relation, see e.g., \cite[Chapter 5]{Pazy} 
\begin{eqnarray}
\label{sharp3}
R^h_1(\sigma, r)=\left(A_h(r)-A_h(\sigma)\right)S^h_r(\sigma-r),\quad R^h_{n+1}(\sigma, r)=\int_r^{\sigma}R^h_1(\sigma, r)R^h_n(\gamma, r)d\gamma,\quad n\geq 1.
\end{eqnarray}
Using \eqref{sharp1}, the triangle inequality, the estimate $(a+b)^2\leq 2a^2+2b^2$, $a, b\in\mathbb{R}^+$  and \eqref{sharpa} yields
\begin{eqnarray}
\label{sharp4}
\int_{\tau_1}^{\tau_2}\Vert (-A_h(\tau))^{\rho/2}U_h(\tau_2,r)\Vert^2_{L(H)}ds&\leq& 2\int_{\tau_1}^{\tau_2}\Vert (-A_h(\tau))^{\rho/2}S^h_r(\tau_2-r)\Vert^2_{L(H)}dr\nonumber\\
&+&2\int_{\tau_1}^{\tau_2}\left\Vert\int_r^{\tau_2}(-A_h(\tau))^{\rho/2}S^h_{\sigma}(\tau_2-\sigma)R^h(\sigma, r)d\sigma\right\Vert^2_{L(H)}dr\nonumber\\
&=:&C(\tau_2-\tau_1)^{1-\rho}+ J_1.
\end{eqnarray}
  Using \lemref{evolutionlemma} and \eqref{smooth2} yields
\begin{eqnarray}
\label{sharp5}
J_1&\leq& \int_{\tau_1}^{\tau_2}\left(\int_r^{\tau_2}\Vert(-A_h(\tau))^{\rho/4}S^h_{\sigma}(\tau_2-\sigma)\Vert_{L(H)}\Vert (-A_h(\tau))^{\rho/4}R^h(\sigma, r)\Vert_{L(H)} d\sigma\right)^2dr\nonumber\\
&\leq& C\int_{\tau_1}^{\tau_2}\left(\int_{r}^{\tau_2}(\tau_2-\sigma)^{-
\rho/4}(\sigma-r)^{-\rho/4}d\sigma\right)^2dr\nonumber\\
&\leq& C\int_{\tau_1}^{\tau_2}\left(\int_r^{\frac{\tau_2+r}{2}}(\tau_2-\sigma)^{-\rho/4}(\sigma-r)^{-\rho/4}d\sigma\right)^2dr+C\int_{\tau_1}^{\tau_2}\left(\int^{\tau_2}_{\frac{\tau_2+r}{2}}(\tau_2-\sigma)^{-\rho/4}(\sigma-r)^{-\rho/4}d\sigma\right)^2dr\nonumber\\
&\leq& C\int_{\tau_1}^{\tau_2}(\tau_2-r)^{-\rho/2}\left(\int_r^{\frac{\tau_2+r}{2}}(\sigma-r)^{-\rho/4}d\sigma\right)^2dr+C\int_{\tau_1}^{\tau_2}(\tau_2-r)^{-\rho/2}\left(\int_{\frac{\tau_2+r}{2}}^{\tau_2}(\tau_2-\sigma)^{-\rho/4}d\sigma\right)^2dr\nonumber\\
&\leq& C\int_{\tau_1}^{\tau_2}(\tau_2-r)^{2-\rho}dr\nonumber\\
&\leq& C(\tau_2-\tau_1)^{3-\rho}.
\end{eqnarray}
Substituting  \eqref{sharp5} in \eqref{sharp4} yields 
\begin{eqnarray}
\int_{\tau_1}^{\tau_2}\left\Vert(-A_h(\tau))^{\rho/2}U_h(\tau_2, r)\right\Vert^2_{L(H)}dr\leq C(\tau_2-\tau_1)^{1-\rho}.
\end{eqnarray}
This completes the proof of \eqref{sharpb}. The proof of \eqref{sharpc} is similar to that of \eqref{sharpb}.
\end{proof}
\begin{lemma}
\label{sharpsecond}
Let $\rho\in[0, 1]$.  Under  \asssref{assumption2} and \ref{assumption5}, the following estimates hold
\begin{eqnarray}
\label{sharpd}
\int_{\tau_1}^{\tau_2}\left\Vert (-A_h(\tau))^{\rho}U_h(\tau_2, r)\right\Vert_{L(H)}dr&\leq& C(\tau_2-\tau_1)^{1-\rho},\quad 0\leq \tau_1\leq \tau_2\leq T,\quad \tau\in[0, T],\\
\label{sharpe}
\int_{\tau_1}^{\tau_2}\left\Vert U_h(\tau_2, r)(-A_h(\tau))^{\rho}\right\Vert_{L(H)}dr&\leq& C(\tau_2-\tau_1)^{1-\rho},\quad 0\leq \tau_1\leq \tau_2\leq T,\quad \tau\in[0,T].
\end{eqnarray}
\end{lemma}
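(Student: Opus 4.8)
The plan is to treat the two ranges $\rho\in[0,1)$ and $\rho=1$ separately, the latter being the delicate one. For $\rho\in[0,1)$ the estimates \eqref{sharpd} and \eqref{sharpe} are an immediate consequence of the parabolic smoothing of the evolution system already recorded in \lemref{evolutionlemma}: indeed, \eqref{ae1} gives $\Vert(-A_h(\tau))^{\rho}U_h(\tau_2,r)\Vert_{L(H)}\le C(\tau_2-r)^{-\rho}$ for all $\tau,r\in[0,T]$ with $r<\tau_2$, so that
\[\int_{\tau_1}^{\tau_2}\Vert(-A_h(\tau))^{\rho}U_h(\tau_2,r)\Vert_{L(H)}\,dr\le C\int_{\tau_1}^{\tau_2}(\tau_2-r)^{-\rho}\,dr=\frac{C}{1-\rho}(\tau_2-\tau_1)^{1-\rho},\]
which is \eqref{sharpd}; \eqref{sharpe} follows in the same way from \eqref{ae3}. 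Hence the only real work concerns the borderline case $\rho=1$, which is also the one responsible for the optimal, reduction-free convergence order in time.

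For $\rho=1$ the pointwise bound $(\tau_2-r)^{-1}$ coming from \eqref{ae1} is no longer integrable, so I would imitate the argument used for \eqref{sharpb}. First, insert the integral equation \eqref{sharp1}, $U_h(\tau_2,r)=S^h_r(\tau_2-r)+\int_r^{\tau_2}S^h_{\sigma}(\tau_2-\sigma)R^h(\sigma,r)\,d\sigma$, and split the integrand accordingly into a principal semigroup term and an $R^h$-correction term. The correction term is handled exactly as in \eqref{sharp5}: distributing the factor $(-A_h(\tau))$ as $(-A_h(\tau))^{1/2}(-A_h(\tau))^{1/2}$ over $S^h_{\sigma}(\tau_2-\sigma)$ and over $R^h(\sigma,r)$, using \eqref{smooth2}, \lemref{evolutionlemma}, the recurrence \eqref{sharp3} and the norm equivalences of \lemref{lemma0a}, one gains enough regularity from the inner $\sigma$-integration (a Beta-function integral) that the outer $r$-integration only produces a higher power of $\tau_2-\tau_1$. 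For the principal term, one reduces to the self-adjoint part via the Zassenhaus factorisation \eqref{sharp6}--\eqref{sharp8}, $S^h_r(t)=S^h_s(r,t)T^h_r(t)$ with $T^h_r(t)$ uniformly bounded, together with the equivalence $\mathcal{D}((-A_h(r))^{\alpha})=\mathcal{D}((-A_h^s(r))^{\alpha})$ with equivalent norms and the insertion of $P_h$, exactly as in \eqref{sharp10}; one is then left with $\int_{\tau_1}^{\tau_2}\Vert(-A_h^s(r))S^h_s(r,\tau_2-r)P_h\Vert_{L(H)}\,dr$, and expanding in the eigenbasis $(e^h_n(r))_{n=1}^{N_h}$ of \assref{assumption5} turns the self-adjoint part into a diagonal operator with entries $\lambda^h_n(r)\,e^{-\lambda^h_n(r)(\tau_2-r)}$, whose $r$-integral is controlled, as in \eqref{sharp11}--\eqref{sharp12}, through the uniform boundedness of $x\mapsto x^{\rho-1}(1-e^{-x})$ on $[0,\infty)$ together with Parseval's identity.

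The step I expect to be the main obstacle is precisely this last one at $\rho=1$. Unlike in \lemref{sharp}, the quantity to be bounded here is the $L(H)$-operator norm (not a mean-square / Hilbert--Schmidt quantity) of $(-A_h(\tau))\,U_h(\tau_2,r)$, so the eigenbasis reduction has to be combined with a careful splitting of the $r$-interval near the singular endpoint $r=\tau_2$ and with the uniform-in-$h$ spectral bounds of \assref{assumption5}, in order to convert the per-vector diagonal estimates into a genuine operator-norm estimate of the required order. Once \eqref{sharpd} is established, \eqref{sharpe} is obtained symmetrically, working with $U_h(\tau_2,r)(-A_h(\tau))^{\rho}$ and using \eqref{ae3} and \eqref{sharpc} in place of \eqref{ae1} and \eqref{sharpb}, together with the formal-adjoint structure of $A(t)$ recalled in the remark preceding \lemref{sharp}.
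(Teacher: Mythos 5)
Your overall architecture matches the paper's: insert the integral equation \eqref{sharp1}, split into a principal semigroup term and an $R^h$-correction term, and handle the correction term by distributing $(-A_h)^{\rho/2}$ over $S^h_{\sigma}(\tau_2-\sigma)$ and over $R^h(\sigma,r)$, which yields the integrable kernel $(\tau_2-\sigma)^{-\rho/2}(\sigma-r)^{-\rho/2}$ and, after the Beta-type inner integration, the harmless bound $C(\tau_2-\tau_1)^{2-\rho}$. Your observation that for $\rho<1$ the whole lemma already follows from the pointwise smoothing bound \eqref{ae1} is also correct (the paper does not make this shortcut, treating all $\rho\in[0,1]$ uniformly).

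The genuine gap is in your treatment of the principal term at $\rho=1$, and it is exactly the obstacle you flag without resolving. A direct eigenbasis expansion of $(-A_h^{s}(r))S^h_{s}(r,\tau_2-r)$ gives a diagonal operator whose $L(H)$-norm is $\max_n \lambda^h_n(r)e^{-\lambda^h_n(r)(\tau_2-r)}$, and the $r$-integral of this maximum is \emph{not} uniformly bounded in $h$: taking dyadic eigenvalues $\lambda_n=2^n$, $n\le N_h$, each dyadic window $\tau_2-r\in[2^{-(k+1)},2^{-k}]$ contributes at least $(2e)^{-1}$, so the integral grows like $\log N_h\sim|\log h|$. No splitting of the $r$-interval near $r=\tau_2$ repairs this; Parseval is simply unavailable for an operator-norm (sup-type) quantity, unlike the mean-square quantity of \lemref{sharp}. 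The paper's key idea, which is absent from your proposal, is to avoid any spectral computation here: by the semigroup property and submultiplicativity of the operator norm,
\begin{equation*}
\left\Vert (-A_h(r))^{\rho}S^h_r(\tau_2-r)\right\Vert_{L(H)}
=\left\Vert \left[(-A_h(r))^{\rho/2}S^h_r\!\left(\tfrac{\tau_2-r}{2}\right)\right]^2\right\Vert_{L(H)}
\leq \left\Vert (-A_h(r))^{\rho/2}S^h_r\!\left(\tfrac{\tau_2-r}{2}\right)\right\Vert_{L(H)}^2,
\end{equation*}
which converts the $L^1$-in-$r$ estimate of the $\rho$-power into the $L^2$-in-$r$ estimate of the $\rho/2$-power, i.e.\ precisely \eqref{sharpa} of \lemref{sharp}, giving $J_2\leq C(\tau_2-\tau_1)^{1-\rho}$ for all $\rho\in[0,1]$ including the borderline case. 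You should replace your spectral argument for the principal term by this halving-plus-Cauchy--Schwarz reduction; with that change, your proof of \eqref{sharpd} closes, and \eqref{sharpe} follows symmetrically as you indicate.
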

\begin{proof}
We only prove \eqref{sharpd} since the proof of \eqref{sharpe} is similar.
Using \eqref{sharp1} and triangle inequality yields
\begin{eqnarray}
\label{vend1}
\int_{\tau_1}^{\tau_2}\left\Vert (-A_h(\tau))^{\rho}U_h(\tau_2, r)\right\Vert_{L(H)}dr&\leq& \int_{\tau_1}^{\tau_2}\left\Vert(-A_h(\sigma))^{\rho}S^h_r(\tau_2-r)\right\Vert_{L(H)}dr\nonumber\\
&+&\int_{\tau_1}^{\tau_2}\int_r^{\tau_2}\left\Vert (-A_h(\sigma))^{\rho}S^h_{\sigma}(\tau_2-\sigma)R^h(\sigma, r)\right\Vert_{L(H)}d\sigma dr\nonumber\\
&=:&J_2+J_3.
\end{eqnarray}
Using \lemref{evolutionlemma}, H\"{o}lder inequality and \eqref{sharpa} yields
\begin{eqnarray}
\label{vend2}
J_2&\leq& C\int_{\tau_1}^{\tau_2}\left\Vert(-A_h(r))^{\rho}S^h_r(\tau_2-r)\right\Vert_{L(H)}dr\nonumber\\
&=&C\int_{\tau_1}^{\tau_2}\left\Vert(-A_h(r))^{\rho/2}S^h_r\left(\frac{\tau_2-r}{2}\right)(-A_h(r))^{\rho/2}S^h_r\left(\frac{\tau_2-r}{2}\right)\right\Vert_{L(H)}dr\nonumber\\
&\leq&C\int_{\tau_1}^{\tau_2}\left\Vert(-A_h(r))^{\rho/2}S^h_r\left(\frac{\tau_2-r}{2}\right)\right\Vert^2_{L(H)}dr\nonumber\\
&\leq& C(\tau_2-\tau_1)^{1-\rho}.
\end{eqnarray}
Using  \lemref{evolutionlemma} yields
\begin{eqnarray}
\label{vend3}
J_3&\leq&\int_{\tau_1}^{\tau_2}\int_r^{\tau_2}\left\Vert(-A_h(\sigma))^{\rho/2}S^h_{\sigma}(\tau_2-\sigma)\right\Vert_{L(H)}\left\Vert(-A_h(\sigma))^{\rho/2}R^h(\sigma, r)\right\Vert_{L(H)}d\sigma dr\nonumber\\
&\leq& C\int_{\tau_1}^{\tau_2}\int_r^{\tau_2}(\tau_2-\sigma)^{-\rho/2}(\sigma-r)^{-\rho/2}d\sigma dr.
\end{eqnarray}
Splitting the second integral of \eqref{vend3} in two parts as in the estimate of $J_1$ \eqref{sharp5} and integrating yields
\begin{eqnarray}
\label{vend4}
J_3\leq C(\tau_2-\tau_1)^{2-\rho}.
\end{eqnarray}
Substituting \eqref{vend4} and \eqref{vend3} in \eqref{vend2} completes the proof of \eqref{sharpd}.
\end{proof}
The following space and time regularity hold for the semi-discrete problem \eqref{semi1}, and will be useful in our convergence analysis.
\begin{lemma} 
\label{lemma1}
Let \asssref{assumption2} (i) and (ii), \ref{assumption3} and \ref{assumption4} be fulfilled with
the corresponding  $0\leq \beta<2$. If $X_0\in L^p\left(\Omega, \mathcal{D}((-A(0))^{\beta/2})\right)$, $p\geq 2$, 
then for all $\gamma\in[0,\beta]$ the following regularity estimates hold
\begin{eqnarray}
\label{regular1}
\Vert (-A_h(0))^{\gamma/2}X^h(t)\Vert_{L^p(\Omega,H)}&\leq& C,\quad 0\leq t\leq T,\\
\label{regular2}
\Vert X^h(t_2)-X^h(t_1)\Vert_{L^p(\Omega,H)}&\leq& C(t_2-t_1)^{\min(\beta,1)/2}, \quad 0\leq t_1\leq t_2\leq T.
\end{eqnarray}
Moreover, if  \assref{assumption5} is fulfilled then the regularity estimates \eqref{regular1} and \eqref{regular2} still hold for $\beta=2$.
\end{lemma}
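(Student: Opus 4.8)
The strategy is to work from the mild form of the semi-discrete solution
\begin{eqnarray*}
X^h(t)=U_h(t,0)P_hX_0+\int_0^tU_h(t,s)P_hF(s,X^h(s))ds+\int_0^tU_h(t,s)P_hdW(s),
\end{eqnarray*}
and to estimate each of the three terms separately. For the spatial regularity bound \eqref{regular1} with exponent $\gamma/2$, I would apply $(-A_h(0))^{\gamma/2}$ to each term. The homogeneous term is handled by \eqref{equidiscrete2} followed by \eqref{ae3} (or rather the version with $(-A_h(0))^{\gamma/2}U_h(t,0)(-A_h(0))^{-\beta/2}$ from \eqref{ae2}, which is bounded by $t^{(\beta-\gamma)/2}\le C$ since $\gamma\le\beta$), together with \lemref{lemma0a}-type commutation and the estimate $\|(-A_h(0))^{\beta/2}P_hX_0\|_{L^p(\Omega,H)}\le C\|X_0\|_{L^p(\Omega,\mathcal D((-A(0))^{\beta/2}))}$ coming from \eqref{permut1}. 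The deterministic convolution term is bounded using \eqref{ae1} with $\alpha=\gamma/2<1$, the singularity $(t-s)^{-\gamma/2}$ being integrable, together with the linear growth $\|F(s,X^h(s))\|\le K_3(1+\|X^h(s)\|)$ from \assref{assumption3} and a Gronwall argument. The stochastic convolution term is the delicate one: by the Burkholder–Davis–Gundy inequality in $L^p(\Omega,H)$ and It\^o's isometry \eqref{ito}, its $p$-th moment is controlled by
\begin{eqnarray*}
\left(\int_0^t\left\|(-A_h(0))^{\gamma/2}U_h(t,s)P_hQ^{1/2}\right\|_{\mathcal L_2(H)}^2ds\right)^{1/2},
\end{eqnarray*}
and here I would insert $(-A_h(s))^{-(\beta-1)/2}$, use \lemref{lemma1a} to pull out the Hilbert–Schmidt factor $\|(-A_h(s))^{(\beta-1)/2}P_hQ^{1/2}\|_{\mathcal L_2(H)}\le C$, and be left with $\int_0^t\|(-A_h(0))^{\gamma/2}U_h(t,s)(-A_h(s))^{(\beta-1)/2}\|_{L(H)}^2ds$. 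For $\gamma<\beta$ this converges even using the crude bound \eqref{ae2}; the borderline $\gamma=\beta$ (in particular $\beta=2$) is exactly where the sharp integral estimate \lemref{sharp}, equation \eqref{sharpb}, is indispensable, giving $\int_0^t\|(-A_h(0))U_h(t,s)(-A_h(s))^{1/2}\|^2ds\le C$ rather than a divergent or log-divergent bound.

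For the temporal regularity \eqref{regular2}, write $X^h(t_2)-X^h(t_1)$ as a sum of the usual five increments: $(U_h(t_2,0)-U_h(t_1,0))P_hX_0$; $\int_0^{t_1}(U_h(t_2,s)-U_h(t_1,s))P_hF(s,X^h(s))ds$; $\int_{t_1}^{t_2}U_h(t_2,s)P_hF(s,X^h(s))ds$; $\int_0^{t_1}(U_h(t_2,s)-U_h(t_1,s))P_hdW(s)$; and $\int_{t_1}^{t_2}U_h(t_2,s)P_hdW(s)$. The increment $U_h(t_2,s)-U_h(t_1,s)=(U_h(t_2,t_1)-\mathbf I)U_h(t_1,s)$ combined with \eqref{hen2} (or \eqref{hen1}) gives a factor $(t_2-t_1)^{\gamma}\|(-A_h(r))^{\gamma}U_h(t_1,s)\cdot\|$; choosing the exponent to match the available regularity ($\beta/2$ on the initial data via \eqref{ae2}, an $\epsilon$-loss absorbed into $\min(\beta,1)/2$ for the drift because $F$ takes values only in $H$, and $(\beta-1)/2$-type regularity for the noise through \lemref{lemma1a}) yields the rate $(t_2-t_1)^{\min(\beta,1)/2}$ for the deterministic pieces. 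The two boundary-layer integrals over $[t_1,t_2]$ are estimated by \eqref{ae1} with a sub-critical power for the drift, and for the noise by BDG, It\^o's isometry, \lemref{lemma1a}, and $\int_{t_1}^{t_2}\|(-A_h(0))^{1/2}U_h(t_2,s)(-A_h(s))^{(\beta-1)/2}\cdot\|^2ds$-type bounds, again invoking \lemref{sharp}/\lemref{sharpsecond} at the critical exponent when $\beta=2$ so that no logarithmic factor appears in the time increment.

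The main obstacle, and the reason \assref{assumption5} enters, is precisely the borderline case $\gamma=\beta=2$: the naive estimate $\|(-A_h(0))U_h(t,s)(-A_h(s))^{1/2}\|_{L(H)}\le C(t-s)^{-1/2}$ makes $\int_0^t(t-s)^{-1}ds$ diverge, so for the spatial bound in the stochastic convolution and for the noise increments in the temporal bound one must avoid squaring a single operator bound and instead use the integrated sharp estimates \eqref{sharpb}–\eqref{sharpc} (and \eqref{sharpd}–\eqref{sharpe}), whose proof rests on the Zassenhaus/Baker–Campbell–Hausdorff decomposition of $S^h_r(t)$ into its self-adjoint part times a bounded factor, reducing to the spectral computation carried out in \lemref{sharp}. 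Everything else — the Lipschitz/linear-growth control of $F$, the uniform boundedness of $U_h$ and $P_h$, and the final Gronwall inequality closing the implicit dependence of the bounds on $\sup_{s\le t}\|X^h(s)\|_{L^p(\Omega,H)}$ — is routine, and the differentiability hypotheses on $F$ in \assref{assumption3} are not needed here (they are reserved for the fully discrete error analysis).
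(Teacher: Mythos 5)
Your plan follows essentially the same route the paper takes: the paper's own proof is just a pointer to the multiplicative-noise companion paper [Tambue3], with the remark that the borderline case $\beta=2$ additionally requires the sharp integral estimates of Lemmas \ref{sharp} and \ref{sharpsecond} — exactly the decomposition (mild form, It\^{o}/BDG plus Lemma \ref{lemma1a} for the stochastic convolution, standard increment splitting for the temporal bound) and the exact critical-exponent role of Assumption \ref{assumption5} that you identify. The only blemish is a sign slip in the leftover operator after extracting the Hilbert--Schmidt factor (it should read $(-A_h(s))^{-(\beta-1)/2}$ on the right of $U_h(t,s)$), which does not affect your exponent count or the conclusion.
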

\begin{proof} The proof follows the same lines as that in \cite{Tambue3} for multiplicative noise. 
Note that in the case of additive noise, \eqref{regular1} shows that we can have a  spatial regularity estimate for $\gamma\in[0,2)$. 
Note that in the case of multiplicative noise \cite{Tambue3}, we can only take $\gamma\in[0,1)$. Note that the proof of  \lemref{lemma1} for $\beta=2$ makes use of \lemsref{sharp} and \ref{sharpsecond}. Note also that the optimal case \eqref{regular1} and \eqref{regular2} with $\beta=2$ are crucial to achieve optimal convergence order in time, which corresponds to  an analogue of the optimal regularity results in \cite{KruseLarsson}, for time independent self-adjoint operator $A$. 
\end{proof}
For non commutative operators $H_j$ on a Banach space, we introduce the following notation
\begin{eqnarray}
\prod_{j=l}^kH_j=\left\{\begin{array}{ll}
H_kH_{k-1}\cdots H_l,\quad \text{if} \quad k\geq l,\\
\mathbf{I},\quad \hspace{2cm} \text{if} \quad k<l.
\end{array}
\right.
\end{eqnarray}
The following lemma will be useful in our convergence proof.
\begin{lemma} 
\label{lemma2}
Let   \assref{assumption2} be fulfilled. Then the following estimate holds
\begin{eqnarray}
\label{comp1}
\left\Vert\left(\prod_{j=l}^me^{\Delta tA_{h,j}}\right)(-A_{h,l})^{\gamma}\right\Vert_{L(H)}&\leq& Ct_{m-l}^{-\gamma}, \quad 0\leq l< m,\quad 0\leq \gamma<1,\\
\label{comp1a}
\left\Vert(-A_{h,k})^{\gamma_1}\left(\prod_{j=l}^me^{\Delta tA_{h,j}}\right)(-A_{h,l})^{-\gamma_2}\right\Vert_{L(H)}&\leq& Ct_{m-l}^{\gamma_2-\gamma_1}, 
\quad 0\leq l< m,\; 0\leq k\leq M
\end{eqnarray}
$0\leq \gamma_1\leq 1$, $ 0<\gamma_2\leq 1$.
\end{lemma}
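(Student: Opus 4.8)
\textbf{Proof proposal for \lemref{lemma2}.}

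The plan is to prove \eqref{comp1} first and then deduce \eqref{comp1a} by a similar argument combined with the norm–equivalence lemmas. For \eqref{comp1}, the key idea is to compare the discrete ``frozen-coefficient'' product $\prod_{j=l}^m e^{\Delta t A_{h,j}}$ with the evolution system $U_h(t_{m+1},t_l)$ of \remref{evolutionremark}, for which the smoothing bound \eqref{ae3} of \lemref{evolutionlemma} already gives exactly the estimate we want, namely $\Vert U_h(t_{m+1},t_l)(-A_{h,l})^{\gamma}\Vert_{L(H)}\leq C t_{m-l}^{-\gamma}$ (after noting $t_{m+1}-t_l=t_{m+1-l}$ and adjusting indices). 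So it suffices to control the difference between the product and the evolution operator. The natural tool is the telescoping identity: writing $P_{l}^{m}:=\prod_{j=l}^m e^{\Delta t A_{h,j}}$, one expands $P_l^m-U_h(t_{m+1},t_l)$ as a sum over $k$ of terms of the form $U_h(t_{m+1},t_{k+1})\bigl(e^{\Delta t A_{h,k}}-U_h(t_{k+1},t_k)\bigr)P_l^{k-1}$, and each factor $e^{\Delta t A_{h,k}}-U_h(t_{k+1},t_k)$ is $\mathcal{O}(\Delta t^{2})$ in an appropriate smoothed norm because both operators solve the same equation up to the time-variation of $A_h$, which is Lipschitz by \eqref{ref3}--\eqref{ref2} of \lemref{lemma0a}. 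Combined with \eqref{smooth2} for the short-time factor and \lemref{evolutionlemma} for the long-time factors, the sum is a discrete convolution that converges and is bounded uniformly, yielding \eqref{comp1}.

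Alternatively — and this is probably the cleaner route to write up — one proves \eqref{comp1} directly by induction on $m-l$ using a discrete ``perturbation of the semigroup'' argument in the spirit of \cite{Pazy}: one does not need the full comparison with $U_h$, only the two structural facts that (a) $\Vert P_l^m\Vert_{L(H)}\leq C$ uniformly (each $e^{\Delta tA_{h,j}}$ is a contraction up to a constant, by \eqref{smooth2} with $\alpha=0$, and products of uniformly bounded analytic semigroup operators with the same sector are uniformly bounded — this itself needs the Lipschitz-in-time bound \eqref{ref3} to keep the constant independent of $m$), and (b) the single-step smoothing $\Vert e^{\Delta t A_{h,k}}(-A_{h,k})^{\gamma}\Vert_{L(H)}\leq C\Delta t^{-\gamma}$ from \eqref{smooth2}. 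One splits $P_l^m(-A_{h,l})^{\gamma}=\bigl(\prod_{j=l+1}^m e^{\Delta t A_{h,j}}\bigr)e^{\Delta t A_{h,l}}(-A_{h,l})^{\gamma}$, bounds the last two factors by $C\Delta t^{-\gamma}$ and the remaining product by $C$, which already gives the bound when $m-l$ is of order one; for $m-l$ large one instead peels off half the semigroups, $P_l^m(-A_{h,l})^{\gamma}=P_{k+1}^m\cdot\bigl(P_l^{k}(-A_{h,l})^{\gamma}\bigr)$ with $k\approx(m+l)/2$, applies \eqref{ae1}-type smoothing at a macroscopic scale (after the product–vs–evolution comparison, or directly via the analytic-semigroup resolvent representation for the composite), and closes the induction. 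Either way the output is $C t_{m-l}^{-\gamma}$.

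For \eqref{comp1a}, the factor $(-A_{h,l})^{-\gamma_2}$ on the right only helps: combine the bound just obtained for $\gamma=\gamma_1\in[0,1)$ with \lemref{lemma0} to move from $(-A_{h,k})^{\gamma_1}$ to $(-A_{h,l})^{\gamma_1}$ at the cost of a constant (valid since $0\leq\gamma_1\leq 1$), and then use the operator identity $(-A_{h,l})^{\gamma_1}(-A_{h,l})^{-\gamma_2}=(-A_{h,l})^{\gamma_1-\gamma_2}$ together with \eqref{comp1} applied with exponent $\gamma_1-\gamma_2$ when $\gamma_1\geq\gamma_2$, and with the boundedness of $(-A_{h,l})^{\gamma_1-\gamma_2}$ (a bounded operator, since $\gamma_1-\gamma_2\leq 0$) plus \eqref{comp1} with exponent $\gamma_1$ when $\gamma_1<\gamma_2$; in the latter sub-case one in fact gets the stronger bound $Ct_{m-l}^{-\gamma_1}\leq Ct_{m-l}^{\gamma_2-\gamma_1}$ up to a constant depending on $T$. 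The endpoint $\gamma_1=1$ is handled by writing $(-A_{h,k})=(-A_{h,k})^{1-\delta}(-A_{h,k})^{\delta}$ and absorbing $(-A_{h,k})^{\delta}$ into the nearest semigroup factor via \eqref{smooth2}, reducing to exponents strictly below $1$.

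The main obstacle is step (a) of the direct argument: showing that the product of $m-l$ uniformly bounded analytic-semigroup operators $e^{\Delta t A_{h,j}}$ with \emph{different}, time-dependent generators is bounded in $L(H)$ by a constant independent of $m-l$ and of $\Delta t$. This is precisely where the Lipschitz-in-time hypotheses \eqref{ref3}--\eqref{ref2} (equivalently \assref{assumption2}(ii) transferred to $A_h$ via \lemref{lemma0a}) are essential, and the cleanest rigorous way to get it is the comparison with $U_h(t_{m+1},t_l)$ described above, using \cite[Chapter 5]{Pazy}; the telescoping error sum must be shown to be summable uniformly, which relies on the $\mathcal{O}(\Delta t^2)$ single-step consistency together with the integrable singularity $t_{m+1-k}^{-\gamma}$, $\gamma<1$, coming from the analyticity. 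Everything else is routine bookkeeping with the estimates already established in \lemsref{lemma0}, \ref{lemma0a}, \ref{evolutionlemma} and \remref{remark1}.
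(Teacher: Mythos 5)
The paper itself does not prove this lemma (it only cites \cite{Tambue2}), so there is no in-paper argument to compare against; I judge your proposal on its own terms. For \eqref{comp1} your first route is sound and is exactly the technique the authors use for \lemref{fonda}: compare $\prod_{j=l}^m e^{\Delta t A_{h,j}}$ with $U_h(t_{m+1},t_l)$, bound the latter composed with $(-A_{h,l})^{\gamma}$ by \eqref{ae3}, telescope the difference, and control each summand by the one-step consistency bound of \lemref{smoothing1} (order $\Delta t^{2}$ after smoothing with $(-A_{h,\cdot})^{-1}$) together with the smoothing of $U_h$ and of the shorter products; the resulting discrete convolution is summed exactly as in \eqref{fonda7}, using the $\epsilon$-trick of \lemref{lemma9} for the borderline $t^{-1}$ factor and treating the end terms by the one-step estimates. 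Your ``alternative'' second route, by contrast, is not a proof: there is no resolvent representation for a composite of non-commuting semigroups, and peeling off half the factors does not by itself produce the macroscopic smoothing $t_{m-l}^{-\gamma}$ without falling back on the comparison with $U_h$. Write up the first route only.

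The deduction of \eqref{comp1a} from \eqref{comp1} has a genuine gap. After moving $(-A_{h,k})^{\gamma_1}$ to $(-A_{h,l})^{\gamma_1}$ via \lemref{lemma0}, the two fractional powers sit on \emph{opposite sides} of the product $\prod_{j=l}^m e^{\Delta t A_{h,j}}$, whose factors do not commute with $A_{h,l}$; the identity $(-A_{h,l})^{\gamma_1}(-A_{h,l})^{-\gamma_2}=(-A_{h,l})^{\gamma_1-\gamma_2}$ is therefore unavailable, and \eqref{comp1}, which places the fractional power only on the right of the product, cannot be ``applied with exponent $\gamma_1-\gamma_2$''. Moreover your fallback inequality $t_{m-l}^{-\gamma_1}\leq C\,t_{m-l}^{\gamma_2-\gamma_1}$ is reversed: it is equivalent to $t_{m-l}^{-\gamma_2}\leq C$, which fails since $t_{m-l}$ can be as small as $\Delta t$ and $\gamma_2>0$; the bound $C\,t_{m-l}^{\gamma_2-\gamma_1}$ is strictly \emph{stronger} than $C\,t_{m-l}^{-\gamma_1}$ for small $t_{m-l}$, and the whole point of retaining $(-A_{h,l})^{-\gamma_2}$ is to gain that positive power. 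A correct proof must run the same comparison-plus-telescoping argument while keeping $(-A_{h,l})^{-\gamma_2}$ attached: the leading term is handled by \eqref{ae2}, and in the remainder the factor $P_l^{k-1}(-A_{h,l})^{-\gamma_2}$ must be estimated (inductively, or by the two-sided analogue) so that each summand carries the gain $t_{k-1-l}^{\gamma_2}$. Be aware also that, as with \eqref{ae2}, the estimate \eqref{comp1a} can only hold as stated when $\gamma_1\geq\gamma_2$ (otherwise take $m=l+1$ and let $\Delta t\to0$: the left-hand side tends to a nonzero limit while the right-hand side vanishes), so no argument — yours or otherwise — will establish the ``stronger'' decaying bound in the regime $\gamma_2>\gamma_1$; the exponent should effectively be read as $\min(0,\gamma_2-\gamma_1)$.
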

\begin{proof}
The proof can be found in \cite{Tambue2}.
\end{proof}

Let us consider the following deterministic problem : find $u\in V$ such that 
\begin{eqnarray}
\label{model3}
\frac{du}{dt}=A(t)u, \quad u(\tau)=v, \quad t\in(\tau,T].
\end{eqnarray}
The corresponding semi-discrete problem in space consists of finding $u^h\in V_h$ such that 
\begin{eqnarray}
\frac{du^h}{dt}=A_h(t)u_h, \quad u^h(\tau)=P_hv.
\end{eqnarray}

\begin{lemma}
\label{lemma3}
Let \assref{assumption2} be fulfilled. 
 For $v\in\mathcal{D}((-A(0))^{\alpha/2})$,  the following error estimate holds for the semi-discrete approximation of \eqref{model3} 
\begin{eqnarray}
\label{ErrorAntonio}
\Vert u(t)-u^h(t)\Vert\leq Ch^r(t-\tau)^{-(r-\alpha)/2}\Vert v\Vert_{\alpha},\quad 0\leq \alpha\leq r, \quad r\in[0,2].
\end{eqnarray}
\end{lemma}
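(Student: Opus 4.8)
The estimate \eqref{ErrorAntonio} is the standard nonsmooth-data finite element error bound for a parabolic problem, adapted to the non-autonomous setting; the plan is to follow the classical approach via the Ritz projection and a representation of the error through the evolution systems $U(t,\tau)$ and $U_h(t,\tau)$, combined with interpolation between the smooth case $\alpha=r$ and the rough case $\alpha=0$. First I would introduce, for each fixed $s\in[0,T]$, the Ritz projection $R_h(s):V\to V_h$ associated with the bilinear form $a(s)(\cdot,\cdot)$, i.e. $a(s)(R_h(s)w,\chi)=a(s)(w,\chi)$ for all $\chi\in V_h$, together with its well-known $L^2$/$H^1$ approximation property $\Vert w-R_h(s)w\Vert\le Ch^r\Vert w\Vert_{r}$ for $0\le r\le 2$, uniformly in $s$ (this follows from the smoothness of the coefficients $q_{ij},q_j$ in $x$ and their Hölder continuity in $t$). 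Writing $u(t)-u^h(t)=(u(t)-R_h(t)u(t))+(R_h(t)u(t)-u^h(t))=:\vartheta(t)+\varrho(t)$, the term $\vartheta(t)$ is controlled directly by the Ritz estimate together with the smoothing property \eqref{smooth} of the exact evolution system: $\Vert\vartheta(t)\Vert\le Ch^r\Vert u(t)\Vert_r = Ch^r\Vert(-A(t))^{r/2}U(t,\tau)v\Vert\le Ch^r(t-\tau)^{-(r-\alpha)/2}\Vert v\Vert_\alpha$, using the norm equivalence \eqref{equivnorme1} and \eqref{ae2}.

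For the discrete component $\varrho(t)\in V_h$ I would derive its evolution equation. Differentiating and using the definitions of $A_h(t)$ and $R_h(t)$, one finds $\frac{d\varrho}{dt}-A_h(t)\varrho = -P_h\dot\vartheta(t) + (\text{terms coming from }\dot R_h(t))$, so that by Duhamel's formula for the discrete evolution system $U_h$,
\begin{eqnarray}
\varrho(t)=U_h(t,\tau)\varrho(\tau)+\int_\tau^t U_h(t,s)\big(-P_h\dot\vartheta(s)+\cdots\big)\,ds,
\end{eqnarray}
where $\varrho(\tau)=R_h(\tau)v-P_hv$. One then estimates each piece using \lemref{evolutionlemma}: the boundedness \eqref{reste2} of $U_h$, the smoothing estimate \eqref{ae1}, and the sharp integral bound \lemref{sharp} to absorb the singularity in $\dot\vartheta(s)\sim(s-\tau)^{-1}$ near $s=\tau$. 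The time derivative $\dot\vartheta$ is handled by differentiating the Ritz error relation and again invoking \eqref{smooth} for $\dot u(s)=A(s)U(s,\tau)v$. The extra terms involving $\dot R_h$ are lower-order because $\frac{d}{dt}a(t)(\cdot,\cdot)$ is again a bounded bilinear form on $V\times V$ by the Hölder/Lipschitz regularity of the coefficients in $t$ (this is where \eqref{conditionB}–\eqref{conditionBB} and \assref{assumption2}(iii) enter).

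I would actually organize the argument by proving \eqref{ErrorAntonio} first at the two endpoints and then interpolating. For $\alpha=r$ (smooth data) the bound is $\Vert u(t)-u^h(t)\Vert\le Ch^r\Vert v\Vert_r$ with no time weight, which follows from a routine energy argument / the stability of $U_h$ plus the uniform Ritz estimate. For $\alpha=0$ (rough data $v\in H$) one gets the fully singular weight $(t-\tau)^{-r/2}$; here the key is a duality/nonsmooth-data argument in which the singularity is produced by $\Vert(-A_h(t))^{r/2}U_h(t,s)\Vert\le C(t-s)^{-r/2}$ together with the integral estimate from \lemref{sharp} to make the resulting time integral convergent for $r<2$, and the borderline logarithm for $r=2$ is avoided precisely because \assref{assumption5} is available through \lemref{sharp}. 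The intermediate values $0<\alpha<r$ then follow by interpolation of the family of bounded operators $v\mapsto u(t)-u^h(t)$ between $H$ and $\mathcal{D}((-A(0))^{r/2})$, using \eqref{domaine}–\eqref{equivnorme1} to identify the interpolation spaces with $\mathcal{D}((-A(0))^{\alpha/2})$ uniformly in $t$.

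The main obstacle is the non-autonomy: one cannot diagonalize $A(t)$ or use a fixed spectral decomposition, so every step that in the autonomous case is a one-line consequence of functional calculus must instead be routed through the evolution systems $U$, $U_h$ and their estimates from \propref{proposition1}, \lemref{evolutionlemma}, and especially the sharp integral bounds of \lemref{sharp} and \lemref{sharpsecond}. Controlling the commutator-type errors — the mismatch between $R_h(t)$, $P_h$, and the discrete semigroup at different time levels, and the $\dot R_h$ contributions — while keeping the time weight $(t-\tau)^{-(r-\alpha)/2}$ sharp (no logarithmic loss when $r=2$) is the delicate part; this is exactly what \lemref{sharp} is designed to supply, so the proof reduces to carefully matching powers of $(-A_h)$ across the factors and invoking that lemma at the right moment.
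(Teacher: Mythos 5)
The paper does not prove this lemma itself; it defers entirely to the companion deterministic paper \cite{Tambue2}. Your outline follows the classical Ritz-projection route ($u-u^h=\vartheta+\varrho$, Duhamel for $\varrho$, interpolation between $\alpha=0$ and $\alpha=r$), which is indeed the standard framework for such estimates and is presumably the spirit of \cite{Tambue2}. However, two points in your sketch are genuinely problematic. First, you lean on \lemref{sharp} and \assref{assumption5} to handle the borderline case $r=2$, but \lemref{lemma3} is stated under \assref{assumption2} alone; \assref{assumption5} is not available here, and it is not needed: the deterministic nonsmooth-data bound $\Vert u(t)-u^h(t)\Vert\le Ch^{2}(t-\tau)^{-1}\Vert v\Vert$ is classical for sectorial, non-self-adjoint, time-dependent generators and carries no logarithm (the logarithmic correction in this paper arises only in the stochastic convolution, cf.\ \lemref{spaceestimate}(ii)). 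Invoking a hypothesis the lemma does not grant is a structural gap.

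Second, and more seriously, your treatment of the Duhamel term does not close. With $v\in\mathcal{D}((-A(0))^{\alpha/2})$ one has $\dot u(s)=A(s)U(s,\tau)v$, hence
\begin{eqnarray}
\Vert\dot\vartheta(s)\Vert\leq Ch^{r}\Vert\dot u(s)\Vert_{r}\leq Ch^{r}(s-\tau)^{-1-(r-\alpha)/2}\Vert v\Vert_{\alpha},
\end{eqnarray}
which is non-integrable near $s=\tau$ for every $r\geq\alpha$ (even at $r=\alpha$ it is only borderline), not the $(s-\tau)^{-1}$ singularity you state, and no integral smoothing estimate for $U_h$ can absorb it. The heart of the nonsmooth-data analysis is precisely the device that circumvents this: either the parabolic duality argument, or the weighted-in-time energy technique (estimating $(t-\tau)\Vert\dot\varrho\Vert$ and $(t-\tau)\Vert\dot\vartheta\Vert$ and integrating by parts in the Duhamel integral, as in Thom\'ee's treatment), adapted to the time-dependent Ritz projection $R_h(t)$ and its derivative $\dot R_h(t)$. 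Your sketch names the obstacle but does not supply the mechanism that resolves it, so as written the argument would fail exactly at the rough-data endpoint $\alpha=0$ that the interpolation step depends on.
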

\begin{proof}
The proof can be found in \cite{Tambue2}.
\end{proof}

\begin{lemma}
\label{spaceestimate}
Let  \asssref{assumption1}-\ref{assumption2}  be fulfilled. Let  $X(t)$ be the mild solution of \eqref{model1} and the  $X^h(t)$ the mild solution of \eqref{semi1}. 
\begin{itemize}
\item[(i)] If $0\leq \beta<2$, then the following error estimate holds
\begin{eqnarray}
\label{time1}
\Vert X(t)-X^h(t)\Vert_{L^2(\Omega,H)}\leq Ch^{\beta},\quad 0\leq t\leq T.
\end{eqnarray}
\item[(ii)] If $\beta=2$, then the following error estimate holds
\begin{eqnarray}
\Vert X(t)-X^h(t)\Vert_{L^2(\Omega,H)}\leq Ch^{2}\left[1+\max\left(\ln\left(\frac{t}{h^2}\right),0\right)\right],\quad 0< t\leq T.
\end{eqnarray}
\end{itemize}
\end{lemma}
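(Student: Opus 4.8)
The plan is to compare the mild solutions
\[
X(t)=U(t,0)X_0+\int_0^tU(t,s)F(s,X(s))\,ds+\int_0^tU(t,s)\,dW(s)
\]
and
\[
X^h(t)=U_h(t,0)P_hX_0+\int_0^tU_h(t,s)P_hF(s,X^h(s))\,ds+\int_0^tU_h(t,s)P_h\,dW(s),
\]
splitting the error into three pieces: the homogeneous part $\mathrm{I}:=U(t,0)X_0-U_h(t,0)P_hX_0$, the deterministic drift part $\mathrm{II}:=\int_0^t\big(U(t,s)F(s,X(s))-U_h(t,s)P_hF(s,X^h(s))\big)ds$, and the stochastic part $\mathrm{III}:=\int_0^t\big(U(t,s)-U_h(t,s)P_h\big)dW(s)$. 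I would bound each in $L^2(\Omega,H)$ and then close the estimate on $\mathrm{II}$ with a Gronwall argument.

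\textbf{Homogeneous term.} Here I would invoke \lemref{lemma3} with $r=\beta$, $\alpha=\beta$ and $v=X_0$, which gives $\|U(t,0)X_0-U_h(t,0)P_hX_0\|_{L^2(\Omega,H)}\le Ch^\beta\|X_0\|_{L^2(\Omega,\mathcal{D}((-A(0))^{\beta/2}))}$ directly for $\beta<2$, and for $\beta=2$ the logarithmic factor appears in exactly the usual way (one interpolates/uses the refined deterministic estimate near $t=0$, splitting $\int_0^t=\int_0^{h^2}+\int_{h^2}^t$), bounded by $Ch^2(1+\max(\ln(t/h^2),0))$.

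\textbf{Drift term.} I would write $\mathrm{II}$ as $\int_0^t U(t,s)\big(F(s,X(s))-F(s,X^h(s))\big)ds + \int_0^t\big(U(t,s)-U_h(t,s)P_h\big)F(s,X^h(s))\,ds$. The first summand is controlled by the Lipschitz bound in \assref{assumption3}, the uniform boundedness of $U$ (\propref{proposition1}(i)), and it feeds the Gronwall loop through $\int_0^t\|X(s)-X^h(s)\|_{L^2(\Omega,H)}ds$. For the second summand I again use \lemref{lemma3}; since $F(s,X^h(s))\in H$ only (i.e. $\alpha=0$), the deterministic estimate reads $Ch^r(t-s)^{-r/2}$, and choosing $r$ slightly below $\min(\beta,2)$ keeps the time singularity integrable, together with the a priori bound $\sup_s\|F(s,X^h(s))\|_{L^2(\Omega,H)}\le C$ coming from \assref{assumption3} and \lemref{lemma1}. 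This yields a contribution $\le Ch^\beta$ (with the $\ln$ factor at $\beta=2$ handled as above, since $r=2$ forces the borderline integral).

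\textbf{Stochastic term.} This is the delicate part. By It\^o's isometry \eqref{ito},
\[
\|\mathrm{III}\|^2_{L^2(\Omega,H)}=\int_0^t\big\|\big(U(t,s)-U_h(t,s)P_h\big)Q^{1/2}\big\|^2_{\mathcal{L}_2(H)}\,ds .
\]
I would insert $(-A(0))^{(\beta-1)/2}$ and its inverse, writing the integrand as $\big\|\big(U(t,s)-U_h(t,s)P_h\big)(-A(0))^{-(\beta-1)/2}\big\|^2_{L(H)}\,\big\|(-A(0))^{(\beta-1)/2}Q^{1/2}\big\|^2_{\mathcal{L}_2(H)}$, the last factor being finite by \assref{assumption4}. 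It remains to show $\int_0^t\big\|\big(U(t,s)-U_h(t,s)P_h\big)(-A(0))^{-(\beta-1)/2}\big\|^2_{L(H)}\,ds\le Ch^{2\beta}$ (up to the $\ln$ factor when $\beta=2$). I would split $U(t,s)-U_h(t,s)P_h = \big(U(t,s)-U_h(t,s)P_h\big)$ acting on a smoothed argument and use the deterministic estimate \lemref{lemma3} with $\alpha=\beta-1$, $r\le\min(\beta+1,2)$: this produces $Ch^r(t-s)^{-(r-\beta+1)/2}$ whose square is integrable in $s$ for $r<\beta+1$, giving order $h^{2\beta}$ after the $\int_0^t ds$. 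The sharp integral estimates \lemsref{sharp} and \ref{sharpsecond} are what allow the borderline regularity $(-A_h(\tau))^{\rho/2}$ inside the time integral to be absorbed without losing a power, which is exactly the mechanism that upgrades $h^{2-\epsilon}$ to $h^2\ln$ when $\beta=2$.

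\textbf{Conclusion.} Combining the three bounds, $\|X(t)-X^h(t)\|_{L^2(\Omega,H)}\le C h^\beta\ (+\ \ln\text{ factor if }\beta=2) + C\int_0^t\|X(s)-X^h(s)\|_{L^2(\Omega,H)}\,ds$; Gronwall's lemma finishes the proof. \textbf{The main obstacle} will be the stochastic term: making the smoothing $(-A(0))^{(\beta-1)/2}$ commute correctly past $U_h(t,s)P_h$ (which requires \lemref{lemma1a}, \lemref{permut1}-type norm-equivalences, and \lemref{evolutionlemma}) and extracting the sharp, logarithm-only loss at $\beta=2$ rather than an $h^{2-\epsilon}$ loss — this is precisely where \lemsref{sharp} and \ref{sharpsecond} must be used in place of the naive smoothing estimate \eqref{smooth2}.
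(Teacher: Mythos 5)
Your decomposition into initial-data, drift, and noise contributions, the use of \lemref{lemma3} for the deterministic error operator, It\^{o}'s isometry for the stochastic convolution, and a closing Gronwall argument is exactly the standard route that the paper (which only cites its companion work \cite{Tambue3} for this proof) intends; the homogeneous and drift terms are handled correctly. (Two small points: for $\beta=2$ the homogeneous term with $r=\alpha=2$ carries no logarithm --- the logarithm originates in the convolution terms; and the first drift summand should carry $U_h(t,s)P_h$ rather than $U(t,s)$ if you want the remaining error-operator term to act on $F(s,X(s))$, but either algebraic split works.)

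The genuine gap is in the stochastic term. After inserting $(-A(0))^{\pm(\beta-1)/2}$ and applying \lemref{lemma3} with $\alpha=\beta-1$, the squared integrand is $Ch^{2r}(t-s)^{-(r-\beta+1)}$, and integrability in $s$ requires $r-\beta+1<1$, i.e. $r<\beta$ --- not $r<\beta+1$ as you state. Consequently this pointwise estimate yields only $h^{r}$ with $r<\beta$ (an $\epsilon$-loss), or, if one splits the integral at $s=t-h^{2}$ and takes $r=\beta$ away from the endpoint, $h^{\beta}$ multiplied by a square-root-of-logarithm factor; neither matches the claimed $Ch^{\beta}$ for $0\leq\beta<2$. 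The missing ingredient is an integrated, mean-square-in-time error estimate of the form $\bigl(\int_0^t\Vert(U(t,s)-U_h(t,s)P_h)(-A(0))^{-(\beta-1)/2}\Vert_{L(H)}^2\,ds\bigr)^{1/2}\leq Ch^{\beta}$, which must be proved directly by an energy or duality argument in the spirit of Yan and Kruse--Larsson rather than by integrating the pointwise bound. Your appeal to \lemsref{sharp} and \ref{sharpsecond} cannot close this: those lemmas require \assref{assumption5}, which is not among the hypotheses of the present lemma, and they bound $\int\Vert(-A_h(\tau))^{\rho/2}U_h(\tau_2,r)\Vert_{L(H)}^2\,dr$, i.e. smoothing of the discrete evolution operator alone, not the error operator $U-U_hP_h$. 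As written, your argument establishes only the weaker rates $Ch^{\beta-\epsilon}$, or $Ch^{\beta}$ with an extraneous logarithmic factor.
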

\begin{proof} The proof  follows the same lines as the one in \cite{Tambue3} for multiplicative noise. 
\end{proof}

The proof of the following lemma can be found in \cite{Tambue3}.
\begin{lemma} Let  \assref{assumption2} be fulfilled.
\label{smoothing1}
 For any $\alpha\in[-1,1]$, the following estimate holds
\begin{eqnarray}
\left\Vert\left(U_h(t_j, t_{j-1})-e^{\Delta tA_{h,j-1}}\right)\left(-A_{h,j-1}\right)^{\alpha}\right\Vert_{L(H)}\leq C\Delta t^{1-\alpha}.
\end{eqnarray}
The following lemma can be found in \cite{Larsson2} or \cite{Tambue3}.
\end{lemma}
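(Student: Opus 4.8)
The plan is to derive an explicit integral representation of the difference and then estimate it using the smoothing properties of the semigroups together with the Lipschitz-in-time bounds of \lemref{lemma0a}. Fixing $v\in V_h$ and differentiating $s\mapsto U_h(t_j,s)e^{(s-t_{j-1})A_{h,j-1}}v$ on $[t_{j-1},t_j]$ (which is legitimate since on the finite dimensional space $V_h$ all operators in sight are bounded and depend smoothly on $s$), the derivative of $U_h(t_j,s)$ and that of the semigroup combine to produce the difference operator $A_h(s)-A_{h,j-1}$, and integrating over $[t_{j-1},t_j]$ yields the error representation
\begin{eqnarray}
\label{smoothingrep}
U_h(t_j,t_{j-1})-e^{\Delta tA_{h,j-1}}=\int_{t_{j-1}}^{t_j}U_h(t_j,s)\left(A_h(s)-A_{h,j-1}\right)e^{(s-t_{j-1})A_{h,j-1}}\,ds.
\end{eqnarray}
Composing on the right with $(-A_{h,j-1})^{\alpha}$, I then estimate the resulting integrand, using the equivalence of norms \lemref{lemma0} to pass freely between fractional powers of $A_h(s)$ and of $A_{h,j-1}$ at the price of harmless constants.

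For $\alpha\in[-1,1)$ a direct estimate suffices. Writing $A_h(s)-A_{h,j-1}=\left[\left(A_h(s)-A_{h,j-1}\right)(-A_{h,j-1})^{-1}\right](-A_{h,j-1})$ and commuting $(-A_{h,j-1})$ through the semigroup, the integrand of \eqref{smoothingrep} composed with $(-A_{h,j-1})^{\alpha}$ becomes $U_h(t_j,s)\left[\left(A_h(s)-A_{h,j-1}\right)(-A_{h,j-1})^{-1}\right](-A_{h,j-1})^{1+\alpha}e^{(s-t_{j-1})A_{h,j-1}}$; since $1+\alpha\geq 0$, the bounds $\Vert U_h(t_j,s)\Vert_{L(H)}\leq C$ (\lemref{evolutionlemma}), $\Vert\left(A_h(s)-A_{h,j-1}\right)(-A_{h,j-1})^{-1}\Vert_{L(H)}\leq C(s-t_{j-1})$ (\eqref{ref3}) and $\Vert(-A_{h,j-1})^{1+\alpha}e^{(s-t_{j-1})A_{h,j-1}}\Vert_{L(H)}\leq C(s-t_{j-1})^{-(1+\alpha)}$ (\eqref{smooth2}) bound it by $C(s-t_{j-1})^{-\alpha}$, and $\int_{t_{j-1}}^{t_j}(s-t_{j-1})^{-\alpha}\,ds=\Delta t^{1-\alpha}/(1-\alpha)$, which is the claimed bound.

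The borderline case $\alpha=1$ is the only real difficulty, and is the main obstacle: there $(s-t_{j-1})^{-1}$ is no longer integrable at the lower endpoint. Here I would split the integral in \eqref{smoothingrep} (now carrying the extra factor $(-A_{h,j-1})$) at the midpoint $m:=(t_{j-1}+t_j)/2$. On $[m,t_j]$ the previous bound $C(s-t_{j-1})^{-1}$ still applies and, since $s-t_{j-1}\geq\Delta t/2$ there, $\int_m^{t_j}(s-t_{j-1})^{-1}\,ds=\ln 2$. On $[t_{j-1},m]$ I would instead move one power of $(-A_h)$ onto $U_h$, writing the integrand as $\left[U_h(t_j,s)(-A_h(s))\right]\left[(-A_h(s))^{-1}\left(A_h(s)-A_{h,j-1}\right)\right]\left[(-A_{h,j-1})e^{(s-t_{j-1})A_{h,j-1}}\right]$; then $\Vert U_h(t_j,s)(-A_h(s))\Vert_{L(H)}\leq C(t_j-s)^{-1}$ (\eqref{ae3}), $\Vert(-A_h(s))^{-1}\left(A_h(s)-A_{h,j-1}\right)\Vert_{L(H)}\leq C(s-t_{j-1})$ (\eqref{ref2}) and $\Vert(-A_{h,j-1})e^{(s-t_{j-1})A_{h,j-1}}\Vert_{L(H)}\leq C(s-t_{j-1})^{-1}$ (\eqref{smooth2}) bound the integrand by $C(t_j-s)^{-1}$, which is integrable on $[t_{j-1},m]$ since $t_j-s\geq\Delta t/2$ there, giving $\int_{t_{j-1}}^m(t_j-s)^{-1}\,ds=\ln 2$. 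Summing the two contributions yields $\Vert\left(U_h(t_j,t_{j-1})-e^{\Delta tA_{h,j-1}}\right)(-A_{h,j-1})\Vert_{L(H)}\leq C=C\Delta t^{1-1}$. Together with the previous paragraph this establishes the estimate $C\Delta t^{1-\alpha}$ for all $\alpha\in[-1,1]$, uniformly in $h$ and $\Delta t$; the midpoint splitting is needed precisely to absorb the logarithmic divergence of $\int_{t_{j-1}}^{t_j}(s-t_{j-1})^{-1}\,ds$, by exploiting that on each half of $[t_{j-1},t_j]$ one of the two competing singular factors stays bounded below by $\Delta t/2$.
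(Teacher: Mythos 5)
Your argument is correct. The paper itself does not prove this lemma but defers to \cite{Tambue3}; the proof given there (and in the deterministic companion \cite{Tambue2}) rests on exactly the kind of perturbation/Duhamel identity you derive, namely writing the difference as $\int_{t_{j-1}}^{t_j}U_h(t_j,s)\left(A_h(s)-A_{h,j-1}\right)e^{(s-t_{j-1})A_{h,j-1}}\,ds$ (or the symmetric variant with the frozen semigroup on the left and $U_h(s,t_{j-1})$ on the right) and then combining the uniform boundedness of $U_h$, the Lipschitz bounds \eqref{ref3}--\eqref{ref2} and the smoothing estimate \eqref{smooth2}, so your route is essentially the standard one. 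Your handling of the borderline case $\alpha=1$ by splitting at the midpoint and shifting the singular power onto $U_h(t_j,s)$ via \eqref{ae3} on the left half is exactly what is needed to avoid the non-integrable $(s-t_{j-1})^{-1}$; all the ingredients you invoke are available uniformly in $h$ and $t$, and the finite-dimensionality of $V_h$ removes any domain issues in the commutations.
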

\begin{lemma}
\label{lemma9}
For all $\alpha_1,\alpha_2>0$ and $\alpha\in[0,1)$, there exist  two positive constants $C_{\alpha_1\alpha_2}$ and $C_{\alpha,\alpha_2}$ such that
\begin{eqnarray}
\label{son1}
\Delta t\sum_{j=1}^mt_{m-j+1}^{-1+\alpha_1}t_j^{-1+\alpha_2}\leq C_{\alpha_1\alpha_2}t_m^{-1+\alpha_1+\alpha_2},\quad
\Delta t\sum_{j=1}^mt_{m-j+1}^{-\alpha}t_j^{-1+\alpha_2}\leq C_{\alpha\alpha_2}t_m^{-\alpha+\alpha_2}.
\end{eqnarray}

\end{lemma}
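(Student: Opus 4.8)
The plan is to estimate each of the two discrete sums by the corresponding continuous integral and then invoke the standard Beta-function identity. For the first sum, I would fix $m$ and write $\Delta t\sum_{j=1}^m t_{m-j+1}^{-1+\alpha_1}t_j^{-1+\alpha_2}$ and split off the boundary terms $j=1$ and $j=m$ (where one of the factors involves $t_1=\Delta t$ and must be handled separately, since $t_{m-j+1}$ or $t_j$ equals $\Delta t$ there), and for the interior indices $2\le j\le m-1$ compare $\Delta t\,t_{m-j+1}^{-1+\alpha_1}t_j^{-1+\alpha_2}$ with $\int_{t_{j-1}}^{t_j}(t_m-s+\Delta t)^{-1+\alpha_1}s^{-1+\alpha_2}\,ds$ using monotonicity of the two power functions on the relevant subintervals. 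Summing the interior terms then gives a bound by $\int_0^{t_m}(t_m-s)^{-1+\alpha_1}s^{-1+\alpha_2}\,ds = t_m^{-1+\alpha_1+\alpha_2}B(\alpha_1,\alpha_2)$ after the substitution $s=t_m u$, where $B$ is the Beta function; the boundary terms are of order $\Delta t\cdot t_m^{-1+\alpha_1}\cdot\Delta t^{-1+\alpha_2}=\Delta t^{\alpha_2}t_m^{-1+\alpha_1}$ and $\Delta t^{\alpha_1}t_m^{-1+\alpha_2}$, which are dominated by $C t_m^{-1+\alpha_1+\alpha_2}$ since $\Delta t\le t_m$. This yields the first estimate with $C_{\alpha_1\alpha_2}$ depending only on $\alpha_1,\alpha_2,T$.

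For the second sum, $\Delta t\sum_{j=1}^m t_{m-j+1}^{-\alpha}t_j^{-1+\alpha_2}$ with $\alpha\in[0,1)$, I would proceed identically: the factor $t_{m-j+1}^{-\alpha}$ is still integrable at the origin because $\alpha<1$, so the same integral-comparison argument bounds the interior sum by $\int_0^{t_m}(t_m-s)^{-\alpha}s^{-1+\alpha_2}\,ds=t_m^{-\alpha+\alpha_2}B(1-\alpha,\alpha_2)$, and the boundary terms are again of lower order in $\Delta t$ relative to $t_m$. Note one only needs $\alpha_2>0$ for the $s^{-1+\alpha_2}$ singularity at $s=0$ to be integrable, and $\alpha<1$, $\alpha_1>0$ respectively for the other endpoint; these are exactly the stated hypotheses.

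The main technical point — and the only place where care is genuinely required — is the integral-comparison step near the two endpoints, since a naive termwise bound $\Delta t\, t_{m-j+1}^{-1+\alpha_1}t_j^{-1+\alpha_2}\le \int_{t_{j-1}}^{t_j}(\cdots)\,ds$ fails when the integrand is increasing rather than decreasing on $[t_{j-1},t_j]$. The remedy is the usual one: for the factor $s^{-1+\alpha_2}$ (decreasing) use the left endpoint $t_j$ is wrong and one bounds $t_j^{-1+\alpha_2}\le \frac{1}{\Delta t}\int_{t_{j-1}}^{t_j}s^{-1+\alpha_2}\,ds$ only for $j\ge 2$, treating $j=1$ by hand; symmetrically for $t_{m-j+1}^{-1+\alpha_1}$ one isolates $j=m$. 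Everything else is bookkeeping. Since a complete proof appears in \cite{Larsson2} (and in \cite{Tambue3}), I would simply record the statement and refer to those sources, as is done for the neighbouring lemmas in this section.
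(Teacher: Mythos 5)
Your proof plan is correct and is precisely the standard Beta-function comparison that underlies the sources the paper cites; the paper itself gives no proof of \lemref{lemma9}, deferring to \cite{Larsson2} and \cite{Tambue3}, which is also what you ultimately propose to do. One minor simplification: since $t_{m-j+1}=t_m-t_{j-1}$ and $t_j$ are exactly the minima of $t_m-s$ and of $s$ over $[t_{j-1},t_j]$, the termwise bound $\Delta t\, t_{m-j+1}^{-1+\alpha_1}t_j^{-1+\alpha_2}\leq\int_{t_{j-1}}^{t_j}(t_m-s)^{-1+\alpha_1}s^{-1+\alpha_2}\,ds$ holds for \emph{every} $j$ whenever $\alpha_1,\alpha_2\leq 1$ (the remaining cases being trivial because the corresponding factor is then bounded by $t_m^{-1+\alpha_i}$), so no boundary terms actually need to be isolated --- your extra care there is harmless but unnecessary.
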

\begin{lemma}
\label{fonda}Let  \assref{assumption2}  be fulfilled.
\begin{itemize}
\item[(i)] The following estimate holds for $ 1\leq i\leq m$
\begin{eqnarray}
\label{fonda0}
\left\Vert\left(\prod_{j=i}^mU_h(t_j,t_{j-1})\right)-\left(\prod_{j=i-1}^{m-1}
e^{\Delta tA_{h,j}}\right)\right\Vert_{L(H)}&\leq& C\Delta t^{1-\epsilon}.
\end{eqnarray}
\item[(ii)] The following estimate holds for $ 1\leq i\leq m$
\begin{eqnarray}
\label{fonda00}
\left\Vert\left[\left(\prod_{j=i}^mU_h(t_j,t_{j-1})\right)-\left(\prod_{j=i-1}^{m-1}
e^{\Delta tA_{h,j}}\right)\right](-A_h(0))^{-\epsilon}\right\Vert_{L(H)}&\leq& C\Delta t,
\end{eqnarray}
\item[(iii)]
 Then for all $1\leq i\leq m\leq M$.
 For all $\alpha\in[0, 1)$, the following estimate holds
\begin{eqnarray}
\label{fonda000}
\left\Vert\left[\left(\prod_{j=i}^mU_h(t_j,t_{j-1})\right)-\left(\prod_{j=i-1}^{m-1}
e^{\Delta tA_{h,j}}\right)\right](-A_{h,i-1})^{\alpha}\right\Vert_{L(H)}&\leq& C\Delta t^{1-\alpha-\epsilon}t_{m-i+1}^{-\alpha+\epsilon},
\end{eqnarray}
for an arbitrarily small $\epsilon>0$.
\end{itemize}
\end{lemma}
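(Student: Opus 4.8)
The plan is to derive all three estimates from one telescoping decomposition that isolates, in each term, a single one-step error $U_h(t_k,t_{k-1})-e^{\Delta tA_{h,k-1}}$ — for which \lemref{smoothing1} is tailor-made — and to distribute fractional powers of $-A_h$ onto the surrounding products using the smoothing bounds of \lemsref{evolutionlemma} and \ref{lemma2}, finishing with the discrete summation bounds of \lemref{lemma9}. After re-indexing so that both products run over $j=i,\dots,m$ with factors $P_j:=U_h(t_j,t_{j-1})$ and $Q_j:=e^{\Delta tA_{h,j-1}}$, I would use the identity
\[
\prod_{j=i}^mP_j-\prod_{j=i}^mQ_j=\sum_{k=i}^m\left(\prod_{j=k+1}^mP_j\right)\left(P_k-Q_k\right)\left(\prod_{j=i}^{k-1}Q_j\right),
\]
noting that $\prod_{j=k+1}^mU_h(t_j,t_{j-1})=U_h(t_m,t_k)$ is a single evolution operator (so \lemref{evolutionlemma} applies) and $\prod_{j=i}^{k-1}e^{\Delta tA_{h,j-1}}$ is a semigroup product (so \lemref{lemma2} applies). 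Write $\delta_k:=U_h(t_k,t_{k-1})-e^{\Delta tA_{h,k-1}}$.

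For the interior terms $i<k<m$ in \eqref{fonda0}, I would insert $\mathbf I=(-A_{h,k-1})^{-\rho}(-A_{h,k-1})^{\rho}$ right after $\delta_k$: \lemref{smoothing1} with exponent $-\rho$ gives $\|\delta_k(-A_{h,k-1})^{-\rho}\|_{L(H)}\le C\Delta t^{1+\rho}$, the block $U_h(t_m,t_k)$ is bounded via \eqref{reste2}, and $(-A_{h,k-1})^{\rho}\prod_{j=i}^{k-1}e^{\Delta tA_{h,j-1}}$ is bounded by $Ct_{k-i}^{-\rho}$ by \lemref{lemma2} and the norm equivalences \eqref{equidiscrete1}--\eqref{equidiscrete2}; summing $C\Delta t^{1+\rho}t_{k-i}^{-\rho}$ over $k$ with \lemref{lemma9} and choosing $\rho=1-\epsilon$ (one cannot take $\rho=1$, since $\sum_kt_{k-i}^{-1}$ diverges logarithmically) yields $C\Delta t^{1-\epsilon}$. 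For \eqref{fonda000} the same scheme works with the extra factor $(-A_{h,i-1})^{\alpha}$ carried along: splitting $\prod_{j=i}^{k-1}e^{\Delta tA_{h,j-1}}$ at its temporal midpoint by the semigroup property and applying \lemref{lemma2} to each half gives $\|(-A_{h,k-1})^{\rho}\prod_{j=i}^{k-1}e^{\Delta tA_{h,j-1}}(-A_{h,i-1})^{\alpha}\|_{L(H)}\le Ct_{k-i}^{-\rho-\alpha}$, and summation with $\rho=1-\epsilon$ gives the claimed bound.

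For \eqref{fonda00} the negative power $(-A_h(0))^{-\epsilon}$ already present on the right is precisely what lets a full power be moved: \lemref{smoothing1} with exponent $-1$ gives $\|\delta_k(-A_{h,k-1})^{-1}\|_{L(H)}\le C\Delta t^{2}$, while \eqref{comp1a} with $\gamma_1=1$, $\gamma_2=\epsilon$ (together with \eqref{equidiscrete1}--\eqref{equidiscrete2}) gives $\|(-A_{h,k-1})\prod_{j=i}^{k-1}e^{\Delta tA_{h,j-1}}(-A_h(0))^{-\epsilon}\|_{L(H)}\le Ct_{k-i}^{-1+\epsilon}$, so the borderline term becomes integrable and \lemref{lemma9} yields the clean $C\Delta t$. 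The endpoint terms $k=i$ (where $\prod_{j=i}^{k-1}e^{\Delta tA_{h,j-1}}=\mathbf I$, so no power of $-A_h$ can be parked there) and $k=m$ (where $\prod_{j=k+1}^mU_h(t_j,t_{j-1})=\mathbf I$) I would treat directly in all three cases: the relevant one-step block, possibly times $(-A_{h,i-1})^{\alpha}$ or $(-A_h(0))^{-\epsilon}$, is bounded by \lemref{smoothing1} and \lemref{lemma2} alone, giving $C\Delta t$, $C\Delta t^{1-\alpha}$, or $C\Delta t^{1+\epsilon}$, all dominated by the claimed right-hand sides once $\epsilon$ is small enough.

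I expect the main obstacle to be the exponent bookkeeping: every inserted power must lie simultaneously in the admissible range $[-1,1]$ of \lemref{smoothing1}, in the admissible range (essentially $[0,1)$, with delicate endpoint behaviour at $0$ and $1$) of the smoothing estimates in \lemsref{evolutionlemma} and \ref{lemma2}, and must leave behind a series $\sum_kt_{k-i}^{-s}$ or $\sum_kt_{m-k}^{-s'}t_{k-i}^{-s}$ that \lemref{lemma9} sums to exactly the advertised power of $\Delta t$. The conceptual crux — and the reason the three parts are stated separately — is the trichotomy at the borderline full power of $-A_h$: one either accepts a $\Delta t^{-\epsilon}$ loss (parts (i) and (iii)) or exploits a negative power of $-A_h(0)$ already present in the quantity being estimated (part (ii)).
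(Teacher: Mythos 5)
Your overall strategy coincides with the paper's: telescope the difference of products so that each term isolates a single one-step error $U_h(t_k,t_{k-1})-e^{\Delta tA_{h,k-1}}$, estimate that error with \lemref{smoothing1}, distribute powers of $-A_h$ onto the neighbouring blocks via \lemsref{evolutionlemma} and \ref{lemma2}, and close with \lemref{lemma9}. (The paper only writes this out for part (iii), citing \cite{Tambue3} for (i)--(ii).) Your treatment of (i) and (ii) is sound: the choices $\rho=1-\epsilon$ and $(\gamma_1,\gamma_2)=(1,\epsilon)$ in \eqref{comp1a} leave sums of the form $\Delta t\sum_k t_{k-i}^{-1+\epsilon}$, which are within the scope of \lemref{lemma9}. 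One caveat that affects (i) and (iii): the bound $\Vert(-A_{h,k-1})^{\rho}\prod_{j}e^{\Delta tA_{h,j-1}}\Vert\le Ct_{k-i}^{-\rho}$ places the fractional power at the \emph{output} end of a product of non-commuting semigroups, whereas \eqref{comp1} only covers the input end and \eqref{comp1a} requires a compensating negative power on the right; you need either a (provable) left-sided analogue of \eqref{comp1} or an extra $\epsilon$-loss through \eqref{comp1a}.

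The genuine gap is in part (iii). Your placement of the powers yields the summand $\Delta t^{1+\rho}\,t_{k-i}^{-\rho-\alpha}$, and with $\rho=1-\epsilon$ the exponent $-\rho-\alpha\le-1$ whenever $\alpha\ge\epsilon$ --- the generic case, since $\epsilon$ is arbitrarily small while $\alpha$ may approach $1$. \lemref{lemma9} requires exponents strictly greater than $-1$, so the announced finishing step ``summation with \lemref{lemma9}'' does not apply there. The estimate is salvageable: for $\rho+\alpha>1$ the series $\sum_j j^{-\rho-\alpha}$ converges and direct summation gives $C\Delta t^{1-\alpha}$, which implies \eqref{fonda000} because $t_{m-i+1}^{-\alpha+\epsilon}\ge T^{-\alpha+\epsilon}$; but this must be argued separately and the constant degenerates as $\rho+\alpha\downarrow 1$. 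The paper distributes the powers differently precisely to avoid this: it loads the full power onto the evolution block, $\Vert U_h(t_m,t_{i+k-1})(-A_{h,i})\Vert\le Ct_{m-i-k}^{-1}$ by \eqref{ae3} (softened to $t_{m-i-k}^{-1+\epsilon}$ at the price of $\Delta t^{-\epsilon}$), puts $(-A_{h,i})^{-1}$ against the one-step error to gain $\Delta t^{2}$, and keeps only $(-A_{h,i-1})^{\alpha}$ on the semigroup product via \eqref{comp1}, giving $t_{k-1}^{-\alpha}$. The resulting two-sided convolution $\Delta t\sum_k t_{m-i-k}^{-1+\epsilon}t_{k-1}^{-\alpha}$ is exactly what \lemref{lemma9} handles, and it is what produces the factor $t_{m-i+1}^{-\alpha+\epsilon}$ appearing in the statement.
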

\begin{proof}
The proof of (i)-(ii) can be found in \cite{Tambue3}. The estimate \eqref{fonda000} is crucial to achieve higher convergence order in time on the terms 
involving the noise. Using the telescopic identity yields
\begin{eqnarray}
\label{fonda2}
\left(\prod_{j=i}^mU_h(t_j,t_{j-1})\right)-\left(\prod_{j=i}^{m}
e^{\Delta tA_{h,j-1}}\right)
&=&\left(\prod_{j=i+1}^mU_h(t_j, t_{j-1})\right)\left(U_h(t_i,t_{i-1})-e^{\Delta tA_{h,i-1}}\right)\nonumber\\
&+&\sum_{k=2}^{m-i+1}\left(\prod_{j=i+k}^mU_h(t_j,t_{j-1})\right)\left(U_h\left(t_{i+k-1}, t_{i+k-2}\right)-e^{\Delta tA_{h,i+k-2}}\right)\nonumber\\
&&.\left(\prod_{j=i}^{i+k-2}e^{\Delta tA_{h,j-1}}\right).
\end{eqnarray}
Taking the norm in both sides of \eqref{fonda2}, using  \lemsref{evolutionlemma},  \ref{smoothing1}, \ref{lemma2} and \ref{lemma9} yields
\begin{eqnarray}
\label{fonda7}
&&\left\Vert\left[\left(\prod_{j=i}^mU_h(t_j,t_{j-1})\right)-\left(\prod_{j=i}^{m}
e^{\Delta tA_{h,j-1}}\right)\right]\left(-A_{h,i-1}\right)^{\alpha}\right\Vert_{L(H)}\nonumber\\
&\leq& \left\Vert U_h(t_m,t_{i+k-1})\right\Vert_{L(H)}\left\Vert \left(U_h(t_{i},t_{ i-1})-e^{\Delta tA_{h,i-1}}\right)\left(-A_{h,i-1}\right)^{\alpha}\right\Vert_{L(H)}\nonumber\\
&+&\sum_{k=2}^{m-i+1}\left\Vert U_h(t_m, t_{i+k-1})(-A_{h,i})\right\Vert_{L(H)}\left\Vert(-A_{h,i})^{-1}\left(U_h(t_{i+k-1}, t_{i+k-2})-e^{\Delta tA_{h,i+k-2}}\right)\right\Vert_{L(H)}\nonumber\\
&\times&\left\Vert\left(\prod_{j=i}^{i+k-2}e^{\Delta tA_{h,j-1}}\right)(-A_{h,i-1})^{\alpha}\right\Vert_{L(H)}\nonumber\\
&\leq& C\Delta t^{1-\alpha}+C\sum_{k=2}^{m-i+1}t_{m-i-k}^{-1}\Delta t^2\,t_{k-1}^{-\alpha}\nonumber\\
&\leq& C\Delta t^{1-\alpha}+C\Delta t^{1-\alpha-\epsilon}\sum_{k=2}^{m-i+1}\Delta t \,t_{m-i-k}^{-1+\epsilon}t_{k-1}^{-\alpha}\nonumber\\
&\leq& C\Delta t^{1-\alpha-\epsilon}t_{m-i+1}^{-\alpha+\epsilon}.\nonumber\\
\end{eqnarray}
\end{proof}
\begin{lemma}
\label{ancien}
Under  \assref{assumption3} the following estimates hold
\begin{eqnarray}
\label{tamb1}
\Vert P_hF'(t,u)(u)v\Vert&\leq& C\Vert v\Vert, \quad\quad\quad u,v\in H,\quad t\in[0,T],\\
\label{tamb2}
\Vert (-A_h(\tau))^{\frac{-\eta}{2}}P_hF''(t,u)(v_1,v_2)\Vert&\leq& C\Vert v_1\Vert.\Vert v_2\Vert,\quad u, v_1,v_2\in H,\quad t,\tau\in [0,T],
\end{eqnarray}
where  $\eta \in [1,2)$ comes from  \assref{assumption3}. Note that the first and second order derivatives are taken respect the second variable.
\end{lemma}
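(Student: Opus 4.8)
The two estimates are of quite different nature. Estimate \eqref{tamb1} is, in essence, a restatement of the first derivative bound of Assumption \ref{assumption3}: since $P_h$ is the $L^2$-orthogonal projection, $\Vert P_h\Vert_{L(H)}\le 1$, and Assumption \ref{assumption3} gives $\Vert F'(t,w)\Vert_{L(H)}\le K_1$ uniformly in $t\in[0,T]$ and $w\in H$. Applying the bounded linear operator $F'(t,u)$ (of operator norm $\le K_1$) to an argument whose norm is $\Vert v\Vert$ and then projecting therefore yields \eqref{tamb1} with $C=K_1$, with no regularity of $u,v$ required beyond $u,v\in H$.

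The substantive estimate is \eqref{tamb2}, where a negative fractional power of the \emph{discrete, projected} operator $(-A_h(\tau))^{-\eta/2}P_h$ must be controlled using only the second-derivative bound of Assumption \ref{assumption3}, which lives in the negative-order space attached to the \emph{continuous} operator $A(t)$. I would carry the estimate from $A(t)$ down to $A_h(\tau)P_h$ in three steps. First, using the norm equivalences that already let the paper suppress the time dependence of $V_\alpha$ (i.e. \eqref{domaine}--\eqref{equivnorme1}, extended to negative exponents, together with Assumption \ref{assumption2}(iii)), I replace the $\Vert\cdot\Vert_{-\eta,t}$-norm in Assumption \ref{assumption3} by the reference norm, obtaining $\Vert (-A(0))^{-\eta/2}F''(t,u)(v_1,v_2)\Vert\le C\Vert v_1\Vert\,\Vert v_2\Vert$. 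Second, since $\eta/2\in[1/2,1)\subset[0,1]$, Lemma \ref{lemma0} replaces $(-A_h(\tau))^{-\eta/2}$ by $(-A_h(0))^{-\eta/2}$ at the cost of a constant uniform in $h$ and $\tau$, so that it only remains to establish $\Vert (-A_h(0))^{-\eta/2}P_h w\Vert\le C\,\Vert (-A(0))^{-\eta/2}w\Vert$ for all $w\in H$; applying this with $w=F''(t,u)(v_1,v_2)$ and combining with the first step gives \eqref{tamb2}.

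This last "commutator" bound is the main obstacle: it is the negative-power counterpart of \eqref{permut1} for the \emph{full} range of exponents, whereas \eqref{permut1} as stated only covers nonnegative powers up to $1/2$, and here $\eta/2$ may be as large as (just below) $1$. I would obtain it as in \cite{Tambue1,TambueNg} — for instance by duality: $\Vert (-A_h(0))^{-\eta/2}P_h w\Vert=\sup_{\phi\in V_h,\,\Vert\phi\Vert\le 1}\langle w,(-A_h^{*}(0))^{-\eta/2}\phi\rangle$, where $A_h^{*}(0)$ is the discrete operator associated with the adjoint family $A^{*}(t)$ (which satisfies the same structural assumptions \ref{assumption2}), followed by a comparison of $(-A_h^{*}(0))^{-\eta/2}$ with $(-A^{*}(0))^{-\eta/2}$ on $V_h$ and the finite element approximation estimates of Lemma \ref{lemma3}. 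The delicate point throughout is that the only information available on $F''$ sits in a negative-order norm, so no step may introduce an uncontrolled factor $\Vert F''(t,u)(v_1,v_2)\Vert$; this is exactly why a genuine (not merely approximate) transfer of the negative power is needed, and why the regime $\eta/2>1/2$ is the crux of the proof.
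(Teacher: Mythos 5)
Your argument matches the paper's: the paper proves this lemma in one line by combining Lemma \ref{lemma0} (your step replacing $(-A_h(\tau))^{-\eta/2}$ by $(-A_h(0))^{-\eta/2}$) with \cite[Proposition 4.1]{TambueNg}, which is precisely the transfer estimate $\Vert(-A_h(0))^{-\eta/2}P_hw\Vert\le C\Vert(-A(0))^{-\eta/2}w\Vert$ that you correctly isolate as the main obstacle. Your duality sketch is just added detail for that externally cited result, and your treatment of \eqref{tamb1} via $\Vert P_h\Vert_{L(H)}\le 1$ and Assumption \ref{assumption3} is the same trivial observation.
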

\begin{proof}
The proof is a combination of  \lemref{lemma0} and \cite[Proposition 4.1]{TambueNg}.
\end{proof}
After the above preparatory results, we can now prove our main result.
\subsection{Proof of Theorem \ref{mainresult1}}
 Using triangle inequality, we split the fully discrete error in two parts as follows.
\begin{eqnarray}
\label{split}
\Vert X(t_m)-X^h_m\Vert_{L^2(\Omega, H)}\leq \Vert X(t_m)-X^h(t_m)\Vert_{L^2(\Omega,H)}+\Vert X^h(t_m)-X^h_m\Vert_{L^2(\Omega,H)}=: I+II.
\end{eqnarray}
The space error $I$ is estimated in  \lemref{spaceestimate}. It remains to estimate the time error $II$.  Note that the mild solution of \eqref{semi1} can be written as follows.
\begin{eqnarray}
\label{mild5}
X^h(t_m)=U_h(t_m,t_{m-1})X^h(t_{m-1})+\int_{t_{m-1}}^{t_m}U_h(t_m,s)P_hF\left(s,X^h(s)\right)ds
+\int_{t_{m-1}}^{t_m}U_h(t_m,s)P_hdW(s).
\end{eqnarray}
Iterating the mild solution \eqref{mild5} yields
\begin{eqnarray}
\label{mild6}
X^h(t_m)&=&\left(\prod_{j=1}^mU_h(t_j, t_{j-1})\right)P_hX_0+\int_{t_{m-1}}^{t_m}U_h(t_m,s)P_hF\left(s,X^h(s)\right)ds+\int_{t_{m-1}}^{t_m}U_h(t_m,s)P_hdW(s)\nonumber\\
&+&\sum_{k=1}^{m-1}\int_{t_{m-k-1}}^{t_{m-k}}\left(\prod_{j=m-k+1}^{m}U_h(t_j,t_{j-1})\right)U_h(t_{m-k},s)P_hF\left(s, X^h(s)\right)ds\nonumber\\
&+&\sum_{k=1}^{m-1}\int_{t_{m-k-1}}^{t_{m-k}}\left(\prod_{j=m-k+1}^{m}U_h(t_j,t_{j-1})\right)U_h(t_{m-k},s)P_hdW(s).
\end{eqnarray}
Iterating the numerical scheme \eqref{scheme4} by  substituting $X^h_j$, $j=m-1, \cdots,1$ only in the first term of \eqref{scheme4} by their expressions yields
\begin{eqnarray}
\label{num2}
X^h_m&=&\left(\prod_{j=0}^{m-1}e^{\Delta tA_{h,j}}\right)X^h_0+\int_{t_{m-1}}^{t_m}e^{(t_m-s)A_{h,m-1}}P_hF\left(t_{m-1}, X^h_{m-1}\right)ds+\int_{t_{m-1}}^{t_m}e^{\Delta tA_{h,m-1}}P_hdW(s)\nonumber\\
&+&\sum_{k=1}^{m-1}\int_{t_{m-k-1}}^{t_{m-k}}\left(\prod_{j=m-k}^{m-1}e^{\Delta tA_{h,j}}\right)e^{(t_{m-k}-s)A_{h,m-k-1}}P_hF\left(t_{m-k-1},X^h_{m-k-1}\right)ds\nonumber\\
&+&\sum_{k=1}^{m-1}\int_{t_{m-k-1}}^{t_{m-k}}\left(\prod_{j=m-k}^{m-1}e^{\Delta tA_{h,j}}\right)e^{\Delta tA_{h,m-k-1}}P_hdW(s).
\end{eqnarray}
Substracting \eqref{num2} from \eqref{mild6} yields
{\small
\begin{eqnarray}
\label{refait1}
X^h(t_m)-X^h_m&=&\left(\prod_{j=1}^mU_h(t_j, t_{j-1})\right)P_hX_0-\left(\prod_{j=0}^{m-1}e^{\Delta tA_{h,j}}\right)P_hX_0+\int_{t_{m-1}}^{t_m}U_h(t_m,s)P_hF\left(s,X^h(s)\right)ds\nonumber\\
&-&\int_{t_{m-1}}^{t_m}e^{(t_m-s)A_{h,m-1}}P_hF\left(t_{m-1}, X^h_{m-1}\right)ds+\int_{t_{m-1}}^{t_m}U_h(t_m,s)P_hdW(s)-\int_{t_{m-1}}^{t_m}e^{\Delta tA_{h,m-1}}P_hdW(s)\nonumber\\
&+&\sum_{k=1}^{m-1}\int_{t_{m-k-1}}^{t_{m-k}}\left(\prod_{j=m-k+1}^{m}U_h(t_j,t_{j-1})\right)U_h(t_{m-k},s)P_hF\left(s, X^h(s)\right)ds\nonumber\\
&-&\sum_{k=1}^{m-1}\int_{t_{m-k-1}}^{t_{m-k}}\left(\prod_{j=m-k}^{m-1}e^{\Delta tA_{h,j}}\right)e^{(t_{m-k}-s)A_{h,m-k-1}}P_hF\left(t_{m-k-1},X^h_{m-k-1}\right)ds\nonumber\\
&+&\sum_{k=1}^{m-1}\int_{t_{m-k-1}}^{t_{m-k}}\left(\prod_{j=m-k+1}^{m}U_h(t_j,t_{j-1})\right)U_h(t_{m-k},s)P_hdW(s)\nonumber\\
&-&\sum_{k=1}^{m-1}\int_{t_{m-k-1}}^{t_{m-k}}\left(\prod_{j=m-k}^{m-1}e^{\Delta tA_{h,j}}\right)e^{\Delta tA_{h,m-k-1}}P_hdW(s)\nonumber\\
&=:&II_1+II_2+II_3+II_4+II_5.
\end{eqnarray}
}
Taking the norm in both sides of \eqref{refait1} yields 
\begin{eqnarray}
\label{refait2}
\Vert X^h(t_m)-X^h_m\Vert^2_{L^2(\Omega,H)}\leq 25\sum_{i=1}^5\Vert II_i\Vert^2_{L^2(\Omega,H)}.
\end{eqnarray}
We estimate separately $\Vert II_i\Vert_{L^2(\Omega,H)}$, $i=1,\cdots, 5$.

\subsubsection{Estimate of $II_1$, $II_2$ and $II_3$}
 Using  \lemref{fonda} (ii), it holds that
 {\small
 \begin{eqnarray}
 \label{multi1}
 \Vert II_1\Vert_{L^2(\Omega,H)}&\leq& \left\Vert \left[\left(\prod_{j=1}^mU_h(t_j, t_{j-1})\right)-\left(\prod_{j=0}^{m-1}e^{\Delta tA_{h,j}}\right)\right](-A_h(0))^{-\beta/2}\right\Vert_{L(H)}\Vert (-A_h(0))^{\beta/2}X_0\Vert_{L^2(\Omega,H)}\nonumber\\
 &\leq& C\Delta t\Vert -A(0))^{\beta/2}X_0\Vert_{L^2(\Omega, H)}\leq C\Delta t.
 \end{eqnarray}
 }
 Similarly to \cite{Tambue3}, we have the following estimate
 \begin{eqnarray}
 \label{multi2}
 \Vert II_2\Vert_{L^2(\Omega,H)}\leq C\Delta t+C\Delta t\Vert X^h(t_{m-1})-X^h_{m-1}\Vert_{L^2(\Omega,H)}.
 \end{eqnarray}
To estimate $II_3$, we split it in two terms as follows
 \begin{eqnarray}
 \label{man1}
 II_3&=&\int_{t_{m-1}}^{t_m}\left(U_h(t_m, s)-U_h(t_m, t_{m-1})\right)P_hdW(s)+\int_{t_{m-1}}^{t_m}\left(U_h(t_m, t_{m-1})-e^{\Delta tA_{h,m-1}}\right)dW(s)\nonumber\\
 &=:&II_{31}+II_{32}.
 \end{eqnarray}
 Applying the It\^{o}-isometry property, using   \lemsref{lemma1a} and \ref{evolutionlemma} yields
 {\small
 \begin{eqnarray}
 \label{man2}
 \Vert II_{31}\Vert^2_{L^2(\Omega,H)}
 &=&\int_{t_{m-1}}^{t_m}\mathbb{E}\left\Vert\left(
 U_h(t_m,s)-U_h(t_m, t_{m-1})\right)P_hQ^{\frac{1}{2}}\right\Vert^2_{\mathcal{L}_2(H)}ds\nonumber\\
 &\leq& \int_{t_{m-1}}^{t_m}\left\Vert U_h(t_m,s)\left(\mathbf{I}-U_h(s,t_{m-1})\right)\left(-A_{h,m}\right)^{\frac{1-\beta}{2}}\right\Vert^2_{L(H)}\left\Vert\left(-A_{h,m}\right)^{\frac{\beta-1}{2}}P_hQ^{\frac{1}{2}}\right\Vert^2_{\mathcal{L}_2(H)}ds\nonumber\\
 &\leq& C\int_{t_{m-1}}^{t_m}\left\Vert U_h(t_m,s)\left(-A_{h,m}\right)^{\frac{1-\epsilon}{2}}\right\Vert^2_{L(H)}\left\Vert\left(-A_{h,m}\right)^{\frac{-1+\epsilon}{2}}\left(\mathbf{I}-U_h(s,t_{m-1})\right)\left(-A_{h,m}\right)^{\frac{1-\beta}{2}}\right\Vert^2_{L(H)}ds\nonumber\\
 &\leq& C\int_{t_{m-1}}^{t_m}(t_m-s)^{-1+\epsilon}(s-t_{m-1})^{\beta-\epsilon}ds\nonumber\\
 &\leq& C\Delta t^{\beta-\epsilon}\int_{t_{m-1}}^{t_m}(t_m-s)^{-1+\epsilon}ds\leq  C\Delta t^{\beta}.\nonumber\\
 \end{eqnarray}
 }
 Applying the It\^{o}-isometry property, using  \lemsref{lemma1a}, \ref{fonda} (ii) (if $\beta\geq 1 $),  \lemref{fonda} (iii) with $\alpha=\frac{1-\beta}{2}$ (if $\beta<1$)  yields
 \begin{eqnarray}
 \label{man3}
\Vert II_{32}\Vert^2_{L^2(\Omega,H)}
&=&\int_{t_{m-1}}^{t_m}\mathbb{E}\left\Vert\left(U_h(t_m, t_{m-1})-e^{\Delta tA_{h, m-1}}\right)P_hQ^{\frac{1}{2}}\right\Vert^2_{\mathcal{L}_2(H)}ds\nonumber\\
 &\leq& \int_{t_{m-1}}^{t_m}\mathbb{E}\left\Vert\left(U_h(t_m, t_{m-1})-e^{\Delta tA_{h,m-1}}\right)\left(-A_{h,m-1}\right)^{\frac{1-\beta}{2}}\right\Vert^2_{L(H)}\left\Vert \left(-A_{h,m-1}\right)^{\frac{\beta-1}{2}}P_hQ^{\frac{1}{2}}\right\Vert^2_{\mathcal{L}_2(H)}ds\nonumber\\
 &\leq& C\int_{t_{m-1}}^{t_m}\Delta t^{1+\beta}ds\leq C\Delta t^{2+\beta}.\nonumber\\
 \end{eqnarray}
 Substituting \eqref{man3} and \eqref{man2} in \eqref{man1} yields
 \begin{eqnarray}
 \label{addiv1}
 \Vert II_3\Vert^2_{L^2(\Omega,H)}\leq C\Delta t^{\beta}.
 \end{eqnarray}
  \subsubsection{Estimate of $II_4$}
 \label{TalyorBanach}
To estimate $II_4$, we split it in five terms as follows.
\begin{eqnarray}
\label{eza1}
II_{4}
&=&\sum_{k=1}^{m-1}\int_{t_{m-k-1}}^{t_{m-k}}\left(\prod_{j=m-k+1}^mU_h(t_j,t_{j-1})\right)U_h(t_{m-k},s)\left[P_hF\left(s,X^h(s)\right)-P_hF\left(t_{m-k-1},X^h(t_{m-k-1})\right)\right]ds\nonumber\\
&+&\sum_{k=1}^{m-1}\int_{t_{m-k-1}}^{t_{m-k}}\left(\prod_{j=m-k+1}^mU_h(t_j,t_{j-1})\right)\left[U_h(t_{m-k},s)-U_h(t_{m-k}, t_{m-k-1})\right]P_hF\left(t_{m-k-1},X^h(t_{m-k-1})\right)ds\nonumber\\
&+&\sum_{k=1}^{m-1}\int_{t_{m-k-1}}^{t_{m-k}}\left[\left(\prod_{j=m-k}^mU_h(t_j,t_{j-1})\right)-\left(\prod_{j=m-k-1}^{m-1}e^{\Delta tA_{h,j}}\right)\right]P_hF\left(t_{m-k-1},X^h(t_{m-k-1})\right)ds\nonumber\\
&+&\sum_{k=1}^{m-1}\int_{t_{m-k-1}}^{t_{m-k}}\left(\prod_{j=m-k}^{m-1}e^{\Delta tA_{h,j}}\right)\left(e^{\Delta tA_{h,m-k-1}}-e^{(t_{m-k}-s)A_{h,m-k-1}}\right)P_hF\left(t_{m-k-1},X^h(t_{m-k-1})\right)ds\nonumber\\
&+&\sum_{k=1}^{m-1}\int_{t_{m-k-1}}^{t_{m-k}}\left(\prod_{j=m-k}^{m-1}e^{\Delta tA_{h,j}}\right)e^{(t_{m-k}-s)A_{h,m-k-1}}\left[P_hF\left(t_{m-k-1},X^h(t_{m-k-1})\right)-P_hF\left(t_{m-k-1},X^h_{m-k-1}\right)\right]ds\nonumber\\
&=:&II_{41}+II_{42}+II_{43}+II_{44}+II_{45}.
\end{eqnarray}
Similarly to \cite{Tambue3}, we have the following estimate
\begin{eqnarray}
\label{eza3}
\Vert II_{42}\Vert_{L^2(\Omega,H)}+ \Vert II_{43}\Vert_{L^2(\Omega,H)}+ \Vert II_{44}\Vert_{L^2(\Omega,H)}\leq C\Delta t.
\end{eqnarray}
It remains to estimate $\Vert II_{41}\Vert_{L^2(\Omega, H)}$ and $\Vert II_{45}\Vert_{L^2(\Omega, H)}$. Let us start with the estimate of $\Vert II_{41}\Vert_{L^2(\Omega, H)}$ as it is easy.
Using  \lemref{lemma2} and  \assref{assumption3} yields
\begin{eqnarray}
\label{eza6}
\Vert II_{45}\Vert_{L^2(\Omega,H)}\leq C\sum_{k=1}^{m-1}\int_{t_{m-k-1}}^{t_{m-k}}\Vert X^h\left(t_{m-k-1}\right)-X^h_{m-k-1}\Vert_{L^2(\Omega, H)}\leq C\Delta t\sum_{k=0}^{m-1}\Vert X^h(t_k)-X^h_k\Vert_{L^2(\Omega,H)}.
\end{eqnarray}
To estimate $II_{41}$, we decompose it in two terms as follows
 \begin{eqnarray}
 \label{do1}
 II_{41}&=&\sum_{k=1}^{m-1}\int_{t_{m-k-1}}^{t_{m-k}}U_h(t_m,s)\left[P_hF\left(s,X^h(s)\right)-P_hF\left(t_{m-k-1}, X^h(s)\right)\right]ds\nonumber\\
 &+&\sum_{k=1}^{m-1}\int_{t_{m-k-1}}^{t_{m-k}}U_h(t_m,s)\left[P_hF\left(t_{m-k-1}, X^h(s)\right)-P_hF\left(t_{m-k-1}, X^h(t_{m-k-1})\right)\right]ds\nonumber\\
 &=:&II_{411}+II_{412}.
 \end{eqnarray}
 Using   \assref{assumption3} and  \lemref{evolutionlemma}  yields
 \begin{eqnarray}
 \label{do2}
 \Vert II_{411}\Vert_{L^2(\Omega,H)}\leq C\Delta t^{\beta/2}.
 \end{eqnarray}
 To achieve higher order in $II_{412}$, we apply Taylor's formula in Banach space to $F$. This yields
 \begin{eqnarray}
 \label{mach1}
II_{412}
 &=&\sum_{k=1}^{m-1}\int_{t_{m-k-1}}^{t_{m-k}}U_h(t_m,s)P_hF'\left(t_{m-k-1}, X^h(t_{m-k-1})\right)\left(U_h(s, t_{m-k-1})-\mathbf{I}\right)X^h(t_{m-k-1})ds\nonumber\\
 &+&\sum_{k=1}^{m-1}\int_{t_{m-k-1}}^{t_{m-k}}U_h(t_m,s)P_hF'\left(t_{m-k-1}, X^h(t_{m-k-1})\right)\int_{t_{m-k-1}}^sU_h(s,\sigma)P_hF\left(\sigma, X^h(\sigma)\right)d\sigma ds\nonumber\\
 &+&\sum_{k=1}^{m-1}\int_{t_{m-k-1}}^{t_{m-k}}U_h(t_m,s)P_hF'\left(t_{m-k-1}, X^h(t_{m-k-1})\right)\int_{t_{m-k-1}}^sU_h(s,\sigma)P_hdW(\sigma)ds\nonumber\\
 &+&\sum_{k=1}^{m-1}\int_{t_{m-k-1}}^{t_{m-k}}U_h(t_m, s)\chi^hds\nonumber\\
 &=:&II_{412}^{(1)}+II_{412}^{(2)}+II_{412}^{(3)}+II_{412}^{(4)}, \nonumber\\
 \end{eqnarray}
 where
 \begin{eqnarray}
 \label{chi}
 \chi^h&=&\int_0^1P_hF''\left(t_{m-k-1}, X^h(t_{m-k-1})+\lambda\left(X^h(s)-X^h(t_{m-k-1})\right)\right)\nonumber\\
 &&.\left(X^h(s)-X^h(t_{m-k-1}), X^h(s)-X^h(t_{m-k-1})\right)(1-\lambda)d\lambda.
 \end{eqnarray}
 Using   \lemsref{evolutionlemma}, \ref{ancien}, \ref{lemma1} and  \ref{sharpsecond} yields
 \begin{eqnarray}
 \label{regarde1}
 \Vert II_{412}^{(1)}\Vert_{L^2(\Omega,H)}&\leq& C\sum_{k=1}^{m-1}\int_{t_{m-k-1}}^{t_{m-k}}\left\Vert \left(U_h(s, t_{m-k-1})-\mathbf{I}\right)\left(-A_{h,m-k-1}\right)^{-\beta/2}\right\Vert_{L(H)}\nonumber\\
 &\times&\left\Vert \left(-A_{h,m-k-1}\right)^{\beta/2}X^h(t_{m-k-1})\right\Vert_{L^2(\Omega,H)}ds\nonumber\\
 &\leq& C\sum_{k=1}^{m-1}\int_{t_{m-k-1}}^{t_{m-k}}(s-t_{m-k-1})^{\beta/2}ds\leq C\Delta t^{\beta/2}.
 \end{eqnarray}
 Using  \lemref{evolutionlemma}, \assref{assumption3}, \lemsref{lemma1a} and \ref{ancien}  yields
 \begin{eqnarray}
 \label{mach3}
 \Vert II_{412}^{(2)}\Vert_{L^2(\Omega,H)}\leq C\sum_{k=1}^{m-1}\int_{t_{m-k-1}}^{t_{m-k}}\int_{t_{m-k-1}}^sd\sigma ds\leq C\Delta t.
 \end{eqnarray}
 Applying the It\^{o}-isometry property,  using the fact that the expectation of the cross-product vanishes,  H\"{o}lder inequality, \lemsref{ancien},  \ref{lemma1a} and \ref{evolutionlemma} yields
 {\small
 \begin{eqnarray}
 \label{mach4}
 \Vert II_{412}^{(3)}\Vert^2_{L^2(\Omega, H)}
 &=& \sum_{k=1}^{m-1}\mathbb{E}\left[\left\Vert\int_{t_{m-k-1}}^{t_{m-k}}U_h(t_m,s)P_hF'\left(t_{m-k-1}, X^h(t_{m-k-1})\right)\int_{t_{m-k-1}}^sU_h(s,\sigma)P_hdW(\sigma)ds\right\Vert^2  \right]\nonumber\\
 &\leq&\Delta t\sum_{k=1}^{m-1}\int_{t_{m-k-1}}^{t_{m-k}}\int_{t_{m-k-1}}^s\mathbb{E}\left\Vert U_h(t_m,s)P_hF'\left(t_{m-k-1}, X^h(t_{m-k-1})\right)\right\Vert^2_{L(H)}\nonumber\\
 &&\times\left\Vert U_h(s,\sigma)P_hQ^{\frac{1}{2}}\right\Vert^2_{\mathcal{L}_2(H)}d\sigma ds\nonumber\\
 &\leq & C\Delta t\sum_{k=1}^{m-1}\int_{t_{m-k-1}}^{t_{m-k}}\int_{t_{m-k-1}}^s\left\Vert U_h(s,\sigma)(-A_h(\sigma))^{\frac{1-\beta}{2}}\right\Vert^2_{L(H)}\left\Vert (-A_h(\sigma))^{\frac{\beta-1}{2}}P_hQ^{\frac{1}{2}}\right\Vert^2_{\mathcal{L}_2(H)}d\sigma ds\nonumber\\
 &\leq& C\Delta t\sum_{k=1}^{m-1}\int_{t_{m-k-1}}^{t_{m-k}}\int_{t_{m-k-1}}^s(s-\sigma)^{\min(-1+\beta,0)}d\sigma ds\leq C\Delta t^{\min(1+\beta, 2)}.\nonumber\\
 \end{eqnarray}
 }
 Using  \lemsref{ancien} and \ref{lemma1}, it follows from \eqref{chi} that
 \begin{eqnarray}
 \label{estichi}
 \left\Vert\left(-A_{h,m-k-1}\right)^{-\frac{\eta}{2}}\chi^h\right\Vert_{L^2(\Omega,H)}&\leq& C\left\Vert\Vert X^h(s)-X^h(t_{m-k-1})\Vert^2\right\Vert_{L^2(\Omega,H)}\leq C\Delta t^{\min(\beta,1)}.
 \end{eqnarray}
 Hence, from \eqref{mach1}, using \eqref{estichi}, \lemref{evolutionlemma} we have 
 \begin{eqnarray}
 \label{mach5}
 \Vert II_{412}^{(4)}\Vert_{L^2(\Omega,H)}
 &\leq& \sum_{k=1}^{m-1}\int_{t_{m-k-1}}^{t_{m-k}}\left\Vert U_h(t_m,s)\left(-A_{h,m-k-1}\right)^{\frac{\eta}{2}}\right\Vert_{L(H)}\left\Vert\left(-A_{h,m-k-1}\right)^{-\frac{\eta}{2}}\chi^h\right\Vert_{L^2(\Omega,H)}ds\nonumber\\
 &\leq&C\Delta t^{\min(\beta,1)}\sum_{k=1}^{m-1}\int_{t_{m-k-1}}^{t_{m-k}}(t_m-s)^{-\frac{\eta}{2}}ds\nonumber\\
 &\leq& C\Delta t^{\min(\beta,1)}\int_0^{t_m}(t_m-s)^{-\frac{\eta}{2}}ds\leq C\Delta t^{\min(\beta,1)}.
 \end{eqnarray}
 Substituting \eqref{mach5}, \eqref{mach4}, \eqref{mach3} and \eqref{regarde1}  in \eqref{mach1} yields
 \begin{eqnarray}
 \label{do3}
 \Vert II_{412}\Vert_{L^2(\Omega,H)}\leq C\Delta t^{\beta/2}.
 \end{eqnarray}
 Substituting \eqref{do3} and \eqref{do2} in \eqref{do1} yields
 \begin{eqnarray}
 \label{banc3}
 \Vert II_{41}\Vert_{L^2(\Omega,H)}\leq C\Delta t^{\beta/2}.
 \end{eqnarray}
 Substituting \eqref{banc3}, \eqref{eza3} and \eqref{eza6} in \eqref{eza1} yields
 \begin{eqnarray}
 \label{bruit2}
 \Vert II_4\Vert_{L^2(\Omega,H)}\leq C\Delta t^{\beta/2}+C\Delta t\sum_{k=0}^{m-1}\Vert X^h(t_k)-X^h_k\Vert_{L^2(\Omega,H)}.
 \end{eqnarray}
 
 \subsubsection{Estimate of  $II_5$}
 \label{Noiseestimate}
 To estimate $II_5$, we split it into two terms as follows
 \begin{eqnarray}
 \label{man4}
 II_5&=&\sum_{k=1}^{m-1}\int_{t_{m-k-1}}^{t_{m-k}}\left(\prod_{j=m-k+1}^mU_h(t_j,t_{j-1})\right)\left[U_h(t_{m-k},s)-U_h(t_{m-k}, t_{m-k-1})\right]P_hdW(s)\nonumber\\
 &+&\sum_{k=1}^{m-1}\int_{t_{m-k-1}}^{t_{m-k}}\left[\left(\prod_{j=m-k}^mU_h(t_j,t_{j-1})\right)-\left(\prod_{j=m-k-1}^{m-1}e^{\Delta tA_{h,j}}\right)\right]P_hdW(s)\nonumber\\
 &=:&VI_{51}+VI_{52}.
 \end{eqnarray}
 Applying the It\^{o}-isometry property, using  \lemsref{evolutionlemma}, \ref{lemma1a} and \ref{sharpsecond} yields
 \begin{eqnarray}
 \label{man5}
&&\Vert II_{51}\Vert^2_{L^2(\Omega,H)}\nonumber\\
 &=&\sum_{k=1}^{m-1}\int_{t_{m-k-1}}^{t_{m-k}}\mathbb{E}\left\Vert\left(\prod_{j=m-k+1}^mU_h(t_j, t_{j-1})\right)U_h(t_{m-k}, s)\left(\mathbf{I}-U_h(s, t_{m-k-1})\right)P_hQ^{\frac{1}{2}}\right\Vert^2_{\mathcal{L}_2(H)}ds\nonumber\\
 &\leq& \sum_{k=1}^{m-1}\int_{t_{m-k-1}}^{t_{m-k}}\left\Vert U_h(t_m, s)(-A_{h,m-k-1})^{\frac{1}{2}}\right\Vert^2_{L(H)}\nonumber\\
 &\times&\left\Vert(-A_{h,m-k-1})^{\frac{-1}{2}}\left(\mathbf{I}-U_h(s, t_{m-k-1})\right)(-A_{h,m-k-1})^{\frac{1-\beta}{2}}\right\Vert^2_{L(H)}\left\Vert (-A_{h,m-k-1})^{\frac{\beta-1}{2}}P_hQ^{\frac{1}{2}}\right\Vert^2_{\mathcal{L}_2(H)}ds\nonumber\\
 &\leq& C\sum_{k=1}^{m-1}\int_{t_{m-k-1}}^{t_{m-k}}\left\Vert U_h(t_m, s)(-A_h(0))^{\frac{1}{2}}\right\Vert^2_{L(H)}(s-t_{m-k-1})^{\beta}ds\nonumber\\
 &\leq& C\Delta t^{\beta}\sum_{k=1}^{m-1}\int_{t_{m-k-1}}^{t_{m-k}}\left\Vert U_h(t_m, s)(-A_h(0))^{\frac{1}{2}}\right\Vert^2_{L(H)}ds\nonumber\\
 &\leq& C\Delta t^{\beta}\int_0^{t_m}\left\Vert U_h(t_m, s)(-A_h(0))^{\frac{1}{2}}\right\Vert^2_{L(H)}ds\nonumber\\
 &\leq& C\Delta t^{\beta}.\nonumber\\
 \end{eqnarray}
 Applying the It\^{o}-isometry property, using  \lemsref{fonda} (iii) and  \ref{lemma1a} yields
 \begin{eqnarray}
 \label{man6}
 \Vert II_{52}\Vert^2_{L^2(\Omega,H)}
 &=&\sum_{k=1}^{m-1}\int_{t_{m-k-1}}^{t_{m-k}}\left\Vert\left[\left(\prod_{j=m-k}^mU_h(t_j, t_{j-1})\right)-\left(\prod_{j=m-k-1}^{m-1}e^{\Delta tA_{h,j}}\right)\right]P_hQ^{\frac{1}{2}}\right\Vert^2_{\mathcal{L}_2(H)}ds\nonumber\\
 &\leq&\sum_{k=1}^{m-1}\int_{t_{m-k-1}}^{t_{m-k}}\left\Vert\left[\left(\prod_{j=m-k}^mU_h(t_j, t_{j-1})\right)-\left(\prod_{j=m-k-1}^{m-1}e^{\Delta tA_{h,j}}\right)\right]\left(-A_{h,m-k-1}\right)^{\frac{1-\beta}{2}}\right\Vert^2_{L(H)}\nonumber\\
 &\times&\left\Vert\left(-A_{h,m-k-1}\right)^{\frac{\beta-1}{2}} P_hQ^{\frac{1}{2}}\right\Vert^2_{\mathcal{L}_2(H)}ds\nonumber\\
 &\leq& C\Delta t^{1+\beta-\epsilon}\sum_{k=2}^{m-1}\int_{t_{m-k-1}}^{t_{m-k}}t_{k+1}^{-1+\beta+\epsilon}ds\leq  C\Delta t^{\beta}.\nonumber\\
 \end{eqnarray}
 Substituting \eqref{man6} and \eqref{man5} in \eqref{man4} yields
 \begin{eqnarray}
 \label{addi2}
 \Vert II_5\Vert^2_{L^2(\Omega,H)}\leq C\Delta t^{\beta}.
 \end{eqnarray}
 Substituting \eqref{addi2}, \eqref{bruit2}, \eqref{multi1} and \eqref{multi2} in \eqref{refait1} yields
 \begin{eqnarray}
 \label{bruit3}
 \Vert X^h(t_m)-X^h_m\Vert^2_{L^2(\Omega,H)}\leq C\Delta t^{\beta}+C\Delta t\sum_{k=0}^{m-1}\Vert X^h(t_k)-X^h_k\Vert^2_{L^2(\Omega,H)}.
 \end{eqnarray}
 Applying the discrete Gronwall's lemma to \eqref{bruit3}  completes the proof of  \thmref{mainresult1}.
\section{Numerical experiments}
\label{experiment} 
We consider the reaction diffusion equation 
\begin{eqnarray}
\label{linear}
  dX=[D(t) \varDelta X -k(t) X]dt+ dW 
\qquad \text{given } \quad X(0)=X_{0}=0,
\end{eqnarray}
in the time interval $[0,T]$ with diffusion coefficient $ D(t)=(1/10)(1+e^{-t})$ and reaction rate $k(t)=1$
on homogeneous Neumann boundary conditions on the domain
$\Lambda=[0,L_{1}]\times [0,L_{2}]$. We take $L_1=L_2=1$.
Our function $F(t,u)=k(t)u $ is linear and obviously satisfies  \assref{assumption3}. 
Since $F(t, u)$ is linear on the second variable, it holds that $F'(t,u)v=k(t)v$ for all $u, v\in L^2(\Lambda)$, where $F'$ stands
for the differential  with respect to the second variable. 
Therefore $\Vert F'(t,u)\Vert_{L(H)}\leq\vert k(t)\vert =1$ for all $u\in L^2(\Lambda)$. Obviously we have $F''(t,u)=0$, for all $u\in L^2(\Lambda)$.
In general, we are interested in nonlinear $F$ however for this linear
system we can find a good approximation of the  exact solution to compare our numerics to.
The eigenfunctions $\{e_{i}^{(1)}e_{j}^{(2)}\}_{i,j\geq 0}
$ of the operator $-\varDelta$ here are given by 
\begin{eqnarray}
e_{0}^{(l)}=\sqrt{\dfrac{1}{L_{l}}},\;\;\;\lambda_{0}^{(l)}=0,\;\;\;
e_{i}^{(l)}=\sqrt{\dfrac{2}{L_{l}}}\cos(\lambda_{i}^{(l)}x),\;\;\;\lambda_{i}^{(l)}=\dfrac{i
  \,\pi }{L_{l}},
\end{eqnarray}
where $l \in \left\lbrace 1, 2 \right\rbrace$ and  $i=\{1, 2, 3, \cdots\}$
with the corresponding eigenvalues $ \{\lambda_{i,j}\}_{i,j\geq 0} $ given by 
$\lambda_{i,j}= (\lambda_{i}^{(1)})^{2}+ (\lambda_{j}^{(2)})^{2}.$
The linear operator is $A(t)=D(t)\varDelta$ and has the same eigenfunctions as $\varDelta$, but  with eigenvalues $\{D(t)\lambda_{i,j}\}_{i,j\geq 0}$.
Clearly we have $\mathcal{D}(A(t))=H^2(\Lambda)$ and $\mathcal{D}((A(t))^{\alpha})=\mathcal{D}((A(0))^{\alpha})$ for all $t\in[0,T]$ and $0\leq \alpha\leq 1$.
Since $D(t)$ is bounded below by $(1/10)(1+e^{-T})$, it follows that the ellipticity condition \eqref{ellip} and therefore as a consequence of 
the analysis in  \secref{fullyscheme}, it follows that $A(t)$ are uniformly sectorial.
%
Obviously \assref{assumption2}  and  \eqref{supcond} are also fulfilled.
We also used
\begin{eqnarray}
\label{noise2}
 q_{i,j}=\left( i^{2}+j^{2}\right)^{-(\beta +\delta)}, \, \beta>0,
\end{eqnarray} 
in the representation \eqref{noise2} for some small $\delta>0$. Here
the noise and the linear operator are supposed to have the same
eigenfunctions. We obviously have 
\begin{eqnarray}
\underset{(i,j) \in \mathbb{N}^{2}}{\sum}\lambda_{i,j}^{\beta-1}q_{i,j}<  \pi^{2}\underset{(i,j) 
\in \mathbb{N}^{2}}{\sum} \left( i^{2}+j^{2}\right)^{-(1+\delta)} <\infty,
\end{eqnarray}
thus  \assref{assumption4} 
is satisfied.  In  our simulations, we take
$\beta\in \{1.5,2\}$, with $\delta=0.001$. The  close form  of  the exact solution of \eqref{linear} is known. Indeed using the representation of the noise in \eqref{noise2},
the decomposition of \eqref{linear} in each eigenvector node yields
the following  Ornstein-Uhlenbeck process 
\begin{eqnarray}
\label{exact}
 dX_{i}=-(D(t) \lambda_{i}+k(t))X_{i}dt+ \sqrt{q_{i}}d\beta_{i}(t)\qquad i \in \mathbb{N}^{2}.
\end{eqnarray}
This is a Gaussian process with the mild solution
\begin{eqnarray}
X_{i}(t)= e^{- \int_{0}^{t}b_{i}(s)ds} \left[ X_{i}(0)+  \sqrt{q_{i}}\int_{0}^{t}e^{ \int_{0}^{s} b_{i}(y)dy} d \beta_{i}(s)\right],\; b_{i}(t)=D(t) \lambda_{i}+k(t).
\end{eqnarray}
Applying the Ito isometry yields the  following variance of $X_{i}(t)$
\begin{eqnarray}
\label{exact2}
 \text{Var}(X_{i}(t))=q_{i}\, e^{- \int_{0}^{t}\,2\,b_{i}(s)ds}  \left( \int_{0}^{t}e^{\int_{0}^{s} \,2\,b_{i}(y)dy} ds \right).
\end{eqnarray}
During simulation, we compute the exact solution recurrently as 
\begin{eqnarray}
\label{exact1}
  X_{i}^{m+1}  = e^{- \int_{t_m}^{t_{m+1}}b_{i}(s)ds} X_{i}^m +\left(q_{i}\, e^{- \int_{t_m}^{t_{m+1}}\,2\,b_{i}(s)ds}  \left( \int_{t_m}^{t_{m+1}}e^{\int_{t_m}^{s} \,2\,b_{i}(y)dy} ds \right) \right)^{1/2}R_{i,m},\nonumber \\
  \end{eqnarray}
where $R_{i,m}$ are independent, standard normally distributed random
variables with mean $0$ and variance $1$.
Note that  the integrals involved in \eqref{exact1} are computed exactly  for the first integral and accurately appoximated for the second  integral.

\begin{figure}[!ht]
\begin{center}
 \includegraphics[width=0.45\textwidth]{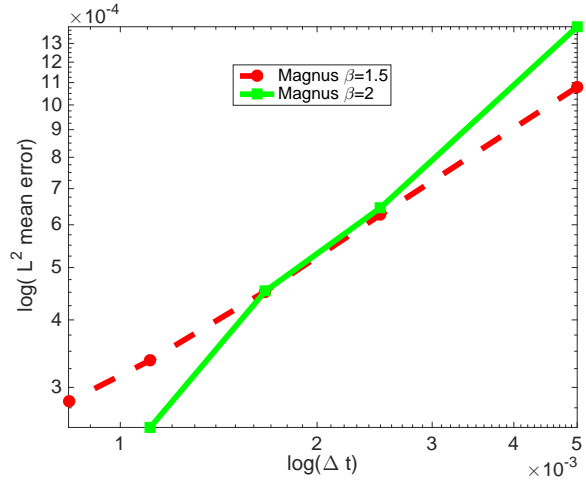}
  \end{center}
 \caption{Convergence of the  stochastic Magnus scheme for $\beta=1.5$ and $\beta=2$ in \eqref{noise2}.
 The order of convergence in time  is $0.995$  for $\beta=2$, and $0.7561$ for $\beta=1.5$. The total number of samples used is $100$.}
 \label{FIGI}
 \end{figure}
 In Figure \ref{FIGI}, we can observe the convergence of the stochastic Magnus scheme  for two noise's parameters.
 Indeed the order of convergence in time is $0.995$  for $\beta=2$, and $0.7561$ for $\beta=1.5$. 
 These orders are close to the theoretical  orders $1$ and $0.75$  obtained in  \thmref{mainresult1} for $\beta=2$ and $\beta=1.5$ respectively.

\end{document}